\documentclass[10pt]{amsart}

\newtheorem{thm}{Theorem}[section]
\newtheorem{prop}[thm]{Proposition}
\newtheorem{conj}[thm]{Conjecture}

\newtheorem{lem}[thm]{Lemma}

\theoremstyle{definition}

\numberwithin{equation}{section}
\newtheorem{rem}[thm]{Remark}
\newtheorem{ex}[thm]{Example}
\newtheorem{defn}[thm]{Definition}


\begin{document}
\bibliographystyle{amsalpha}

\title[Rogers dilogarithms of higher degree]
{
Rogers dilogarithms of higher degree and generalized cluster algebras
}

\author{Tomoki Nakanishi}
\address{\noindent Graduate School of Mathematics, Nagoya University, 
Chikusa-ku, Nagoya,
464-8604, Japan}
\email{nakanisi@math.nagoya-u.ac.jp}

\subjclass[2010]{Primary 13F60, Secondary 33E20}
\keywords{dilogarithm, quantum dilogarithm, cluster algebra}
\thanks{This work was partially supported by JSPS KAKENHI Grant Number 16H03922.
}

\date{}
\maketitle
\begin{abstract}
In connection with generalized cluster algebras
we introduce a certain generalization of
the celebrated Rogers dilogarithm,
which we call the Rogers dilogarithms of higher degree.
We show that there is an identity of these generalized Rogers dilogarithms 
associated with any period of seeds of a
generalized cluster algebra.
\end{abstract}

\section{Introduction}
It is widely known that the {\em (Euler) dilogarithm}
\begin{align}
\label{eq:Li2ps2}
\mathrm{Li}_2(x)=\sum_{n=1}^{\infty} \frac{x^n}{n^2}
=-\int_0^x \frac{\log(1-y)}{y}
dy
\end{align}
appears and plays important roles in several branches of mathematics
(e.g., \cite{Lewin81,Kirillov95,Zagier07}).
The function is remarkable in the sense that it satisfies
a variety of functional equations,
which are generally called {\em dilogarithm identities}.

The {\em quantum dilogarithm} \cite{Faddeev93,Faddeev94}
\begin{align}
\label{eq:qd1}
\mathbf{\Psi}_q(x)
=
\prod_{k=0}^{\infty}
(1+q^{2k+1}x)^{-1},
\end{align}
is regarded as a quantum analogue of the dilogarithm,
and it is as import as the classical (i.e., nonquantum) one \eqref{eq:Li2ps2}.
It is related to its classical counterpart
in  the asymptotic limit as follows:
\begin{align}
\label{eq:asym1}
\mathbf{\Psi}_q(x)
\sim
\exp\left(-
\frac{\mathrm{Li}_2(-x)}{\log q^2}
\right),
\quad
q\rightarrow 1^{-}.
\end{align}

The classical and quantum dilogarithms are intimately related to
the {\em cluster algebras} \cite{Fomin02,Fomin07} and {\em quantum cluster algebras} \cite{Fock03,Fock07}, 
respectively.
This connection was  discovered via the quantization of the  moduli space
of  Riemann surfaces \cite{Fock99,Fock03,Fock07}.
Then, it was also spotlighted through the Donaldson-Thomas theory
\cite{Kontsevich08,Nagao11,Keller11},
and through the $Y$-systems in conformal field theory \cite{Chapoton05, Nakanishi09,
Nakanishi10c}.
In particular, we  reached to the following very general theorem.
\begin{thm}
\label{thm:intro1}
\par
\begin{itemize}
\item[(I).]
There is a dilogarithm identity associated with any period of seeds
of a cluster algebra \cite{Nakanishi10c}.
\item[(II).]
There is a quantum dilogarithm identity associated with any period of seeds
of a quantum cluster algebra \cite{Keller11,Kashaev11}.
\end{itemize}
\end{thm}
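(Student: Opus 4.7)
The plan is to prove (II) first by a quantum-torus argument, then derive (I) by taking the classical limit $q\to 1^-$ via \eqref{eq:asym1}.

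For (II), the strategy is to realize each mutation $\mu_k$ of a quantum $Y$-seed on the ambient quantum torus algebra as the composition of a monomial ``tropical'' transformation with an adjoint action of the form $\Ad(\mathbf{\Psi}_q(Y_k^{\varepsilon_k}))^{\varepsilon_k}$, where $\varepsilon_k\in\{+1,-1\}$ is the tropical sign recorded at that step. Given a period $(i_1,\ldots,i_L;\sigma)$ of $Y$-seeds, the composition $\sigma\circ\mu_{i_L}\circ\cdots\circ\mu_{i_1}$ acts as the identity on the initial seed. The monomial (tropical) parts cancel because the same periodicity holds at the semifield level, leaving the relation
\begin{equation*}
\Ad\bigl(\mathbf{\Psi}_q(Y_{1}^{\varepsilon_1})^{\varepsilon_1}\cdots\mathbf{\Psi}_q(Y_{L}^{\varepsilon_L})^{\varepsilon_L}\bigr) = \mathrm{id},
\end{equation*}
where $Y_t$ denotes the quantum $Y$-variable produced at the $t$-th mutation. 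Since any element of the quantum torus whose adjoint action is trivial must be central, the argument of $\Ad$ lies in the center; comparing leading monomials then shows that this central element equals $1$, which is the desired quantum dilogarithm identity.

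For (I), I would apply the asymptotic expansion \eqref{eq:asym1} to each factor of the identity obtained in (II). Each $\mathbf{\Psi}_q(Y_{t}^{\varepsilon_t})^{\varepsilon_t}$ contributes $-\varepsilon_t\mathrm{Li}_2(-y_{t}^{\varepsilon_t})/\log q^2$ to the leading exponent as $q\to 1^-$, where $y_t$ is the classical specialization of $Y_t$. Evaluating along a positive real solution of the associated $Y$-system and repackaging each $\mathrm{Li}_2$-term with its sign into a Rogers dilogarithm $L$ yields the classical dilogarithm identity. Matching of the leading order in $\log q$ confirms the equality, with no residual anomaly from subleading terms.

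The main obstacle, which must be addressed first, is the \emph{sign-coherence} property: along any mutation sequence, the tropical sign $\varepsilon_t$ at step $t$ is well-defined and globally consistent with the bookkeeping of $Y^{+1}$ versus $Y^{-1}$ factors. This is equivalent to the positivity of $c$-vectors and is the nontrivial structural input from cluster algebra theory that makes both the adjoint-action computation in (II) and the term-by-term classical limit in (I) rigorous. A secondary technical difficulty is keeping track of the noncommutative ordering of factors in the quantum torus along the mutation path; this is handled by working consistently with one chosen ordering (say, left-to-right along the period) throughout both parts.
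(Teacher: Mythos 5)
Your outline of (II) is essentially the standard Keller/Kashaev--Nakanishi argument (decompose each mutation into a tropical monomial part and an adjoint action by $\mathbf{\Psi}_q$, cancel the monomial parts along the period, and then argue that an element with trivial adjoint action is a scalar equal to $1$), and with the usual care about degenerate exchange matrices (passing to principal coefficients so that the centrality argument really forces a scalar) it can be made to work. The genuine gap is in your derivation of (I) from (II). Taking the limit $q\to 1^-$ term by term via \eqref{eq:asym1} is exactly the step that is \emph{not} rigorous, and the paper says so explicitly: the derivation of (I) from (II) in \cite{Kashaev11} is only heuristic. Two concrete problems. First, the arguments of the $\mathbf{\Psi}_q$ factors are noncommuting quantum torus elements, so you cannot replace each factor by $\exp(-\mathrm{Li}_2(-y_t^{\varepsilon_t})/\log q^2)$ and multiply the exponents: the Baker--Campbell--Hausdorff corrections coming from the noncommutativity enter at the same order in $\log q$ as the terms you are keeping. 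Second, the classical identity \eqref{eq:id5} is an identity of \emph{Rogers} dilogarithms, not Euler dilogarithms, and the difference $\frac{1}{2}\log x\cdot\log(1+x)$ in \eqref{eq:Rogers3} has to be produced by precisely those ordering/subleading effects that you dismiss as causing ``no residual anomaly.'' In the heuristic treatment (reproduced in the paper's Appendix in the generalized setting) these log-log terms arise only after representing the quantum $y$-variables by operators, writing an integral kernel, and evaluating it by the saddle point method, and even then the uniqueness of the saddle point, the contour, and the validity of the method are not controlled. Moreover, your phrase ``evaluating along a positive real solution of the associated $Y$-system'' would at best give the identity at special arguments, whereas Theorem \ref{thm:id2} asserts it for an arbitrary semifield homomorphism $\varphi:\mathbb{Q}_+(y)\to\mathbb{R}_+$, i.e.\ arbitrary positive initial data.

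The rigorous route to (I), which is the one the paper follows (and generalizes in Sections 3--5), is entirely on the classical side: one proves the constancy condition $\sum_t \tilde r_{k_t}\, y_{k_t}[t]\wedge(1+y_{k_t}[t])=0$ in $\bigwedge^2\mathbb{Q}_+(y)$ for any $\sigma$-periodic mutation sequence (Theorem \ref{thm:yconst1}, proved via $F$-polynomials, the telescoping quantity $V[t]$, and sign coherence of $c$-vectors), then invokes the Frenkel--Szenes type constancy theorem for the Rogers dilogarithm (Theorem \ref{thm:const2}) together with an interpolation of semifield homomorphisms, and finally fixes the constant by a tropical degeneration using the signs $\varepsilon_t$. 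You correctly identify sign coherence as the key structural input, but in the paper it is used inside this wedge-product argument, not to justify a termwise $q\to1$ limit. If you want to keep your two-step plan, you must either carry out the operator/saddle-point analysis rigorously (which is an open analytic problem) or replace the limit argument by the constancy-condition proof.
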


A heuristic (but not rigorous) derivation of
Statements (I) from Statement (II) was also given in \cite{Kashaev11}.

Recently Chekhov and Shapiro introduced {\em generalized cluster algebras}
\cite{Chekhov11}.
As the name suggests it is a generalization of cluster algebras.
In fact, it is a {\em very natural} generalization so that 
all nice properties of cluster algebras are shown or expected to hold
\cite{Rupel13,Nakanishi14a}.
Naturally one can further define the {\em quantum generalized cluster algebras}
as well \cite{Nakanishi14b}.
For   ordinary quantum cluster algebras, the quantum dilogarithm \eqref{eq:qd1}
plays a key role to control their mutations \cite{Fock03,Fock07}.
 Likewise, 
for  generalized quantum cluster algebras, a generalization of
the quantum dilogarithm, called the {\em quantum dilogarithms of higher degree}
 play the same role,
 and they look as follows: (This notation is used   only here.)
\begin{align}
\label{eq:gqd1}
\mathbf{\Psi}_q^P(x)
=
\prod_{k=0}^{\infty}
P(q^{2k+1}x)^{-1},
\end{align}
where $P(x)$ is an arbitrary monic polynomial in $x$ with unit constant;
furthermore, here we assume that all coefficients of $P(x)$ are
nonnegative real numbers.

In the simplest case $\deg P(x)=1$, we have $P(x)=1+x$ and it reduces to 
the ordinary one \eqref{eq:qd1}.
(In \cite{Nakanishi14b} $P(x)$ is assumed to be {\em reciprocal}, but
this assumption can be  removed with slight change of mutations therein. See \cite{Nakanishi15}.)
Then, it is rather straightforward to generalize Statement (II) in Theorem
\ref{thm:intro1}
in the following way:
\begin{thm}
\begin{itemize}
\item[(II').] There is a quantum dilogarithm identity of higher degree associated with any period of seeds of a quantum generalized cluster algebra \cite{Nakanishi14b}.
\end{itemize}
\end{thm}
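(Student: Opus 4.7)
The strategy is to parallel the proof of Statement (II) for ordinary quantum cluster algebras, with the quantum dilogarithm $\mathbf{\Psi}_q$ replaced by the higher-degree version $\mathbf{\Psi}_q^P$. The plan has three main steps.

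First, I would establish a ``mutation-as-conjugation'' formula: each mutation $\mu_k$ of a quantum generalized cluster algebra is realized, on the ambient quantum torus, as an adjoint action $\Ad\bigl(\mathbf{\Psi}_q^{P_k}(Y_k)\bigr)$, where $Y_k$ is a suitable monomial in the $y$-variables of the current seed and $P_k$ is the polynomial attached to direction $k$. This is the direct analogue of the Fock--Goncharov realization of ordinary mutations by $\Ad(\mathbf{\Psi}_q)$, and it is a local, algebraic verification on generators.

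Second, given a sequence of mutations $\mu_{i_N}\circ\cdots\circ\mu_{i_1}$ realizing a period of seeds (so that the seed returns to itself up to a permutation $\sigma$ of indices), I would compute the composite action in two ways. On the one hand, because the seed returns to itself, the composite acts on the quantum torus by $\sigma$ alone. On the other hand, using step one inductively together with the standard decomposition of each mutation into a ``monomial part'' and an ``automorphism part'', the composite equals an ordered product of conjugations by $\mathbf{\Psi}_q^{P_{i_j}}(Y^{(j)})^{\pm 1}$, where the arguments $Y^{(j)}$ are tracked along the mutation sequence by the $c$-vectors (equivalently, the tropical $y$-variables). Equating the two descriptions yields the desired identity in a suitable completion of the quantum torus algebra.

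The main obstacle I anticipate is step one: verifying that $\Ad\bigl(\mathbf{\Psi}_q^P\bigr)$ correctly implements the $y$-variable part of the generalized mutation rule of \cite{Chekhov11,Nakanishi14b}. In the ordinary case this reduces to the functional equation $\mathbf{\Psi}_q(q^{-1}x)\mathbf{\Psi}_q(qx)^{-1}=(1+x)^{-1}$, which matches the standard $y$-mutation. In the present setting the corresponding identity is $\mathbf{\Psi}_q^P(q^{-1}x)\mathbf{\Psi}_q^P(qx)^{-1}=P(x)^{-1}$, and the generalized mutation rule must have precisely $P(x)^{\pm 1}$ as the multiplicative factor (indeed, this is the structural reason why $P$ rather than $1+x$ enters the generalized theory). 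Once this compatibility is in place, the combinatorial bookkeeping about orderings of factors and tracking of arguments is insensitive to $\deg P$, so the remainder of the argument carries over from the ordinary case with essentially notational changes.
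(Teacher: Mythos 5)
Your proposal is correct and follows essentially the route of the cited source: the paper itself does not reprove (II') but quotes it from \cite{Nakanishi14b} (restated in the Appendix as the tropical-form identity \eqref{eq:Gdilog1}, with arguments tracked by $c$-vectors and tropical signs exactly as in your step two), and that proof proceeds by the same decomposition of each quantum mutation into a monomial part and conjugation by $\mathbf{\Psi}_q^{P}$, using precisely your functional equation. The only points to watch are notational: because the quantum $y$-variables do not commute, the automorphism part of the generalized mutation is a product of $q$-shifted factors $\prod_{m=1}^{|b_{ki}|}P\bigl(q_k^{\mp(2m-1)}Y_k\bigr)^{\mp1}$ with $q_k=q^{r_k}$ (which your adjoint computation produces automatically), and for nonreciprocal $P$ the coefficients $z$ mutate by reversal $z\mapsto z^*$, a bookkeeping item noted in the Appendix remark.
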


Under this circumstance
 it is just natural to expect the following generalization of Statement (I),
 which is also the classical counterpart of Statement (II'):
\begin{itemize}
\item[(I').] {\em There is a dilogarithm identity of higher degree associated with any period of seeds
of a generalized cluster algebra.}
\end{itemize}
In fact, a classical counterpart of the function \eqref{eq:gqd1} was already introduced
in \cite{Nakanishi14b},
and it looks as follows:  (This notation is used   only here.)
\begin{align}
\label{eq:gLi2ps2}
\mathrm{Li}_2^P(x)
=-\int_0^{-x} \frac{\log P(y)}{y}
dy.
\end{align}
Two functions in \eqref{eq:gqd1} and \eqref{eq:gLi2ps2} are related 
in 
 the same way as \eqref{eq:asym1}:
\begin{align}
\label{eq:asym2}
\mathbf{\Psi}_q^P(x)
\sim
\exp\left(
-
\frac{\mathrm{Li}_2^P(-x)}{\log q^2}
\right),
\quad
q\rightarrow 1^{-}.
\end{align}

Now let us explain the obstacle to establish Statement (I') we have been confronted with so far.
In the ordinary case the derivation of Statement (I) in  \cite{Nakanishi10c} is
not as straightforward as its quantum counterpart (II).
The reason is that the dilogarithm naturally concerning with the identities in Statement (I)
is not exactly the Euler dilogarithm \eqref{eq:Li2ps2};
rather, it is the {\em Rogers dilogarithm} defined by
\begin{align}
\label{eq:Lint01}
L(x)
&=
-\frac{1}{2}\int_0^{x} 
\left\{
\frac{\log(1-y)}{y}
+
\frac{\log y}{1-y}
\right\} dy,
\end{align}
where two dilogarithms are related by
\begin{align}
L(x)&= \mathrm{Li}_2(x) + \frac{1}{2}\log x \cdot \log(1-x).
\end{align}
Then, the main issue is to set the proper definition
of the Rogers dilogarithms of higher degree so as to match 
the desired Statement (I'),
which is not {\em a priori} clear.

In this paper we resolve this problem,
and we give a proper definition of the {\em Rogers dilogarithms of higher degree}.
The key idea is to generalize the Rogers dilogarithm,
not through
 the familiar definition \eqref{eq:Lint01},
but through a  less familiar expression,
\begin{align}
\label{eq:Lhd0}
L\left(\frac{x}{1+x}\right)
&=
\frac{1}{2}\int_0^{x} 
\left\{
\frac{\log(1+y)}{y}
-
\frac{\log y}{1+y}
\right\} dy,
\quad
(0 \leq x).
\end{align}
Then, for the same polynomial $P(x)$ in 
\eqref{eq:gLi2ps2}, we define the corresponding Rogers dilogarithm of higher degree
as follows:
(This notation is used   only here.)
\begin{align}
\label{eq:Lhd1}
L^P\left(\frac{x^{\mathrm{deg} P}}{P(x)}\right)
&=
\frac{1}{2}\int_0^{x} 
\left\{
\frac{\log P(y)}{y}
-
\frac{\log y}{P(y)}P'(y)
\right\} dy,
\quad
(0 \leq x).
\end{align}

Once this part is cleared,
it is rather straightforward to follow the argument of the proof of
Statement (I),
and we prove Statement (I'),
which is our main result (Theorems \ref{thm:gid2} and \ref{thm:gid3}).
To complete the picture, we also give a heuristic derivation
of Statement (I') from Statement (II') following \cite{Kashaev11}.

The function  \eqref{eq:Lhd1} may formally reduce
to  (the analytic continuation of) the ordinary one \eqref{eq:Lhd0}
if we factorize the polynomial $P(x)$  into   polynomials of degree one
with complex coefficients.
However, we need to correctly choose
 the path of  the analytic continuation 
of \eqref{eq:Lhd0}
very carefully, and that makes the things  very complicated.
Therefore, it is natural to keep the function 
\eqref{eq:Lhd1} as a single package.

The organization of the paper is as follows.
In Section 2 the connection between
the Rogers dilogarithm and cluster algebras is reviewed.
In Section 3 we introduce the Rogers dilogarithms of higher degree
and prove the constancy condition.
In Section 4 we show that there is a dilogarithm identity of higher degree
associated with any period of a generalized cluster algebra.
In Section 5 we give a proof of Theorem \ref{thm:gyconst1},
which is the key theorem for our main result.
In Section 6 some explicit examples are given.
In Appendix, as  independent reading, we give a heuristic (but not rigorous) derivation of  our 
dilogarithm identity of higher degree from the quantum one.

\medskip
\noindent
{\em Acknowledements.}
The author would like to thank the referee for  useful comments
and suggestions.

\section{Rogers dilogarithm and cluster algebras}
In this section we quickly recall and summarize some known properties of
the Rogers dilogarithm and its relation to cluster algebras,
which we will generalize in this paper.
The whole section may serve as a useful guide 
to show where we are heading.
Here we do not provide any proof,
but the  main theorems will be reproved 
in a more general setting   in the later sections.

\subsection{Rogers dilogarithm}
The  {\em Euler dilogarithm\/} $\mathrm{Li}_2(x)$
was originally defined by the following power series
with convergence radius 1 (see \cite{Lewin81,Kirillov95,Zagier07} for backgrounds):
\begin{align}
\label{eq:Li2ps1}
\mathrm{Li}_2(x)=\sum_{n=1}^{\infty} \frac{x^n}{n^2}.
\end{align}
We have
\begin{align}
\label{eq:Li2const1}
\mathrm{Li}_2(0)=0,
\quad
\mathrm{Li}_2(1)=\zeta(2)=\frac{\pi^2}{6},
\end{align}
where $\zeta(s)$ is the Riemann zeta function.
We also have the integral expression
\begin{align}
\label{eq:Li2int1}
\mathrm{Li}_2(x)=-\int_0^x \frac{\log(1-y)}{y}
dy
=
-\int_0^{-x}\frac{ \log(1+y)}
{y}dy,
\quad
(x\leq 1).
\end{align}
The integrals in \eqref{eq:Li2int1} can be extended on the whole complex plane $\mathbb{C}$.
However, there is a branch point at $x=1$,
 and the function
$\mathrm{Li}_2(x)$ becomes multivalued
on $\mathbb{C}$.
Here we restrict our attention only on the region $x\leq 1$
so that there is no ambiguity of multivaluedness.

The  {\em Rogers dilogarithm\/} $L(x)$ is defined
by
\begin{align}
\label{eq:Rogers2}
L(x)&=
-\frac{1}{2}\int_0^{x} 
\left\{
\frac{\log(1-y)}{y}
+
\frac{\log y}{1-y}
\right\} dy,\quad
(0\leq x\leq 1),
\\
\label{eq:Rogers1}
&= \mathrm{Li}_2(x) + \frac{1}{2}\log x \cdot \log(1-x)
.
\end{align}
Again,
we restrict our attention only on the region $0\leq x \leq 1$,
and there is no ambiguity of multivaluedness.

By \eqref{eq:Li2const1}, we have
\begin{align}
\label{eq:Li2const2}
L(0)=0,
\quad
L(1)=\frac{\pi^2}{6}.
\end{align}
We also have
\begin{align}
\label{eq:Rogers5}
L(x) + L(1-x)= L(1),
\quad
(0\leq x \leq 1).
\end{align}

The following equalities hold:
\begin{align}
\label{eq:Rogers4}
L\left(\frac{x}{1+x}\right)&=
\frac{1}{2}\int_0^{x} 
\left\{
\frac{\log(1+y)}{y}
-
\frac{\log y}{1+y}
\right\} dy,
\quad
(0\leq x),
\\
\label{eq:Rogers3}
&=- \mathrm{Li}_2(-x) - \frac{1}{2}\log x \cdot \log(1+x).
\end{align}
These equalities are less well-known
than \eqref{eq:Rogers2} and \eqref{eq:Rogers1}, but they are crucial for our purpose.
They are easily proven by taking the derivative.
Since the function $x/(1+x)$ is monotonic on $\mathbb{R}_{\geq 0}$
and yields a bijection
from $\mathbb{R}_{\geq 0}$ to $[0,1)$, one may view
\eqref{eq:Rogers4} and \eqref{eq:Rogers3} as an alternative
definition of $L(x)$ on $[0,1]$, where $L(1)$ is defined by the limit
 $x\rightarrow \infty$ in \eqref{eq:Rogers4}.

In view of the form \eqref{eq:Rogers4},
it is  useful to rephrase
the equality \eqref{eq:Rogers5} as 
\begin{align}
\label{eq:Rogers6}
L\left(\frac{x}{1+x}\right) + L\left(\frac{1}{1+x}\right)= L(1),
\quad
(0\leq x ).
\end{align}
Note that there is a duality of the variables in \eqref{eq:Rogers6}:
\begin{align}
\label{eq:dual1}
\frac{1}{1+x}=\frac{x}{1+x}\bigg|_{x=x^{-1}}.
\end{align}

\subsection{Constancy condition}
\label{subsec:const1}
Let $G$ be any multiplicative abelian group.
Let $G\otimes G$ be its tensor product over $\mathbb{Z}$,
that is, the additive abelian group with generators $f\otimes g$ ($f,g\in G$) and relations
\begin{align}
\label{eq:tensor1}
(fg)\otimes h = f\otimes h + g\otimes h,
\quad
f\otimes (gh) = f\otimes g + f\otimes h.
\end{align}
Note that $1\otimes h = h \otimes 1 = 0$.
Let $S^2 G$ be the {\em symmetric subgroup} of
$G\otimes G$,
namely, the subgroup generated by all $f\otimes g + g \otimes f$ ($f,g\in G)$.
We define the {\em wedge product} of $G$ as
$\bigwedge^2 G= G\otimes G/
S^2 G$.

Let
$\mathcal{C}=\mathcal{C}([0,1],\mathbb{R}_+)$ be the set of all 
 positive-real-valued and differentiable functions on the interval $[0,1]$ in $\mathbb{R}$.
Regarding it as 
a multiplicative abelian group, we have
 the wedge product 
$\bigwedge^2\mathcal{C}$.

The following theorem is the starting point of deducing identities 
satisfied by the Rogers dilogarithm.
\begin{thm}[{\cite[Proposition 1]{Frenkel95}}]
\label{thm:const1}
Let $f_1$, \dots, $f_m \in \mathcal{C}$ be  differentiable functions on the interval $[0,1]$
which especially takes values in the interval $(0,1)$.
Suppose that they satisfy the following  relation in $\bigwedge^2\mathcal{C}$:
 (Constancy condition)
\begin{align}
\label{eq:const1}
\sum_{t=1}^m f_t \wedge (1-f_t) = 0.
\end{align}
Then, the  sum of the Rogers dilogarithm
\begin{align}
\label{eq:sum1}
\sum_{t=1}^m L(f_t(u))
\end{align}
is constant as a function of $u\in [0,1]$.
\end{thm}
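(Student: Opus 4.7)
The plan is to reduce the claim to the hypothesis by constructing a $\mathbb{Z}$-linear map $\omega$ from $\bigwedge^2\mathcal{C}$ to the additive group $C$ of real-valued differentiable functions on $[0,1]$, in such a way that $\omega$ carries each generator $f_t\wedge(1-f_t)$ to the derivative $\tfrac{d}{du}L(f_t(u))$.

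First, I would compute this derivative directly. Writing $L'$ from \eqref{eq:Rogers2}, applying the chain rule, and using the identities $(\log f)' = f'/f$ and $(\log(1-f))' = -f'/(1-f)$, a short manipulation gives
\begin{align*}
\frac{d}{du}L(f_t(u))
= \tfrac{1}{2}\bigl\{\log f_t \cdot (\log(1-f_t))' - \log(1-f_t)\cdot (\log f_t)'\bigr\}.
\end{align*}
The essential feature of the right-hand side is that, viewed as a bilinear expression in the pair $(f_t, 1-f_t)$, it is antisymmetric — this is precisely what allows it to factor through a wedge product. Accordingly, I would define $\tilde\omega: \mathcal{C}\otimes\mathcal{C}\to C$ on generators by
\begin{align*}
\tilde\omega(f\otimes g) = \tfrac{1}{2}\bigl(\log f \cdot (\log g)' - \log g \cdot (\log f)'\bigr).
\end{align*}
Since $\log(fg)=\log f+\log g$, the map respects the bilinearity relations \eqref{eq:tensor1} and so is well defined on $\mathcal{C}\otimes\mathcal{C}$; since $\tilde\omega(g\otimes f) = -\tilde\omega(f\otimes g)$, it annihilates $S^2\mathcal{C}$ and descends to a homomorphism $\omega:\bigwedge^2\mathcal{C}\to C$.

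Combining the two ingredients, the first step identifies $\tfrac{d}{du}L(f_t(u))$ with $\omega(f_t\wedge(1-f_t))$, and the hypothesis \eqref{eq:const1} together with additivity of $\omega$ gives $\tfrac{d}{du}\sum_{t=1}^m L(f_t(u)) = \omega(0) = 0$, whence \eqref{eq:sum1} is constant on $[0,1]$. The only substantive step is the first — recognizing that $\tfrac{d}{du}L(f(u))$ is antisymmetric in the pair $(f,1-f)$; the rest is formal manipulation. This observation also points forward to the generalization carried out later in the paper: one will want to choose the Rogers dilogarithm of higher degree so that its derivative takes the analogous antisymmetric form in some pair built from $P(x)$, which is exactly the motivation for the less familiar representation \eqref{eq:Lhd0}--\eqref{eq:Lhd1} highlighted in the introduction.
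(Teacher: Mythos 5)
Your argument is correct and is essentially the paper's own: Theorem \ref{thm:const1} is not proved directly in the paper (it is quoted from \cite{Frenkel95}) but is reproved in generalized form as Theorem \ref{thm:gconst2}, whose proof rests on exactly your key identity that $\frac{d}{du}L(f_t(u))$ equals the antisymmetric combination $\tfrac12\bigl(\log f_t\cdot(\log(1-f_t))'-\log(1-f_t)\cdot(\log f_t)'\bigr)$. The only difference is bookkeeping: where you verify that this bi-additive, antisymmetric assignment descends to a homomorphism on $\bigwedge^2\mathcal{C}$ and apply it to the relation, the paper instead writes the symmetric tensor explicitly as $\sum_i(g_i\otimes h_i+h_i\otimes g_i)$, applies the two-point evaluation $f\otimes g\mapsto\log f(u)\cdot\log g(v)$, and differentiates in each variable before setting $v=u$ --- the same cancellation in a different guise.
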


In view of the form \eqref{eq:Rogers4}, it is useful to rephrase
Theorem \ref{thm:const1}  as
follows:

\begin{thm}[cf. {\cite[Equation (2.3)]{Chapoton05}}]
\label{thm:const2}
Let $y_1$, \dots, $y_m \in \mathcal{C}$.
Suppose that they satisfy the following the relation in $\bigwedge^2\mathcal{C}$:
 (Constancy condition)
\begin{align}
\label{eq:const2}
\sum_{t=1}^m y_t \wedge (1+y_t) = 0.
\end{align}
Then, the  sum of the Rogers dilogarithm
\begin{align}
\label{eq:sum2}
\sum_{t=1}^m L
\left(
\frac{y_t(u)}
{1+y_t(u)}
\right)
\end{align}
is constant as a function of $u\in [0,1]$.
\end{thm}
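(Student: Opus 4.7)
The plan is to reduce Theorem \ref{thm:const2} directly to Theorem \ref{thm:const1} by the change of variable $f_t := y_t/(1+y_t)$ suggested by the alternative form \eqref{eq:Rogers4} of the Rogers dilogarithm.

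The key checks are as follows. First, since $y_t\in\mathcal{C}$ takes values in $\mathbb{R}_{>0}$, the composition $f_t = y_t/(1+y_t)$ is differentiable on $[0,1]$ and takes values in the open interval $(0,1)$; thus the $f_t$ satisfy the hypotheses of Theorem \ref{thm:const1}. Second, $1-f_t = 1/(1+y_t) = (1+y_t)^{-1}$. Third, the constancy condition transfers: using bilinearity of $\wedge$, antisymmetry (which gives $g\wedge g = 0$ and hence $g\wedge g^{-1}=0$ via $g\wedge 1 = 0$), one computes
\begin{align*}
f_t\wedge(1-f_t)
&= \bigl[y_t\cdot(1+y_t)^{-1}\bigr]\wedge(1+y_t)^{-1}\\
&= y_t\wedge(1+y_t)^{-1} + (1+y_t)^{-1}\wedge(1+y_t)^{-1}\\
&= -\,y_t\wedge(1+y_t).
\end{align*}
Summing over $t$, the hypothesis $\sum_t y_t\wedge(1+y_t)=0$ translates to $\sum_t f_t\wedge(1-f_t)=0$. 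Applying Theorem \ref{thm:const1} then yields that $\sum_t L(f_t(u)) = \sum_t L(y_t(u)/(1+y_t(u)))$ is constant on $[0,1]$, which is the claim.

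There is essentially no conceptual obstacle; the only subtle point is the identification $1 - f_t = (1+y_t)^{-1}$ together with the vanishing $g\wedge g^{-1}=0$ in $\bigwedge^2\mathcal{C}$, which accounts for the sign change between the constancy conditions \eqref{eq:const1} and \eqref{eq:const2}. An alternative, self-contained route is to differentiate \eqref{eq:Rogers4} and observe that
\begin{align*}
\frac{d}{du}\,L\!\left(\frac{y_t(u)}{1+y_t(u)}\right)
= \tfrac{1}{2}\bigl[\log(1+y_t)\,d\log y_t - \log y_t\,d\log(1+y_t)\bigr],
\end{align*}
which manifestly factors through the class of $y_t\wedge(1+y_t)$ in $\bigwedge^2\mathcal{C}$; the hypothesis then forces the sum of these derivatives to vanish on $[0,1]$. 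This second viewpoint also indicates the template to be generalized in the higher-degree setting of the later sections, where the factor $1+y_t$ will be replaced by $P(y_t)$.
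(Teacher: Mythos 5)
Your reduction to Theorem \ref{thm:const1} via $f_t = y_t/(1+y_t)$, $1-f_t=(1+y_t)^{-1}$ is exactly how the paper treats Theorem \ref{thm:const2} — as a rephrasing of Theorem \ref{thm:const1} in view of \eqref{eq:Rogers4} — and your wedge computation is correct (the vanishing of $g\wedge g$ is harmless here since $\mathcal{C}$ is uniquely divisible, so $\bigwedge^2\mathcal{C}$ is torsion-free). Your alternative differentiation route is precisely the argument the paper itself uses when it reproves this statement in the generalized setting (proof of Theorem \ref{thm:gconst2}).
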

\rm 

\subsection{Seed mutations}
To make use of wonderful Theorem \ref{thm:const2}, 
it is essential to find a family of functions
$y_1,\dots,y_m\in \mathcal{C}$
 which satisfy the constancy condition
\eqref{eq:const2}.
This is  where cluster algebras take part.
See  \cite{Fomin07} for a general reference on cluster algebras.


First we recall the notion of a semifield, following \cite{Fomin07}.

\begin{defn}
A multiplicative abelian group $\mathbb{P}$
is called a {\em semifield} if it is endowed with a binary operation $\oplus$
which is commutative, associative, and distributative,
i.e., $a(b\oplus c)=ab \oplus ac$. The operation $\oplus$ is called the {\em addition}.
\end{defn}

Here we mainly use the following examples.

\begin{ex}
\label{ex:semi1}
 (1) The set $\mathbb{R}_+$ of all positive real numbers is a semifield,
where the product and the addition are given by the usual ones.
\par
(2)
Let $y=(y_i)_{i=1}^n$ be an $n$-tuple of formal commutative variables.
We say that a rational function $f(y)$ in $y$
with coefficients in $\mathbb{Q}$ {\em has a subtraction-free expression}
if it is represented as $f(y)=P(y)/Q(y)$ such that $P(y)$ and $Q(y)$ are nonzero polynomials
in $y$ with nonnegative integer coefficients.
Let  $\mathbb{Q}_+(y)$ be  the set of all rational functions in $y$ having
subtraction-free expressions.
Then, $\mathbb{Q}_+(y)$
 is a semifield,
where the product and the addition are given by the usual ones for rational functions.
We call it  the {\em universal semifield} of $y$.
\par
(3)
Let $y=(y_i)_{i=1}^n$ be an $n$-tuple of formal commutative variables.
Let 
\begin{align}
\mathrm{Trop}(y)
=\Bigr\{ \prod_{i=1}^n y_i^{a_i}
\mid
a_i \in \mathbb{Z}
\Bigl\}.
\end{align}
Then, $\mathrm{Trop}(y)$
 is a semifield,
where the product  is given by the usual one
 for Laurent monomials,
 while the addition is given by the following {\em tropical sum}:
 \begin{align}
 \prod_{i=1}^n y_i^{a_i}
 \oplus
 \prod_{i=1}^n y_i^{b_i}
 =
 \prod_{i=1}^n y_i^{\min(a_i,b_i)}.
 \end{align}
We call it  the {\em tropical semifield} of $y$.
\end{ex}

Let $\mathbb{P}$ be any semifield.
By regarding it as an abelian multiplicative group,
let $\mathbb{Z}[\mathbb{P}]$ be its group ring.
It is known that $\mathbb{Z}[\mathbb{P}]$ is a domain \cite{Fomin02},
i.e., there is no divisor.
Thus, the field of the fractions of $\mathbb{Z}[\mathbb{P}]$ is well-defined,
and it is denoted by $\mathbb{Q}(\mathbb{P})$ here.

We say that an (integer) square matrix $B$ is {\em skew-symmetrizable}
if there is a diagonal matrix $R=\mathrm{diag}(r_1,\dots,r_n)$ of the same size with positive (integer) diagonal entries $r_1,\dots,r_n$
such that $RB$ is skew-symmetric, i.e., $(RB)^T=-RB$.
Also we  call such $R$ a {\em skew-symmetrizer} of $B$.
Note that if $RB$ is skew-symmetric,
then $BR^{-1}$ is also skew-symmetric.

\par

Now let us give two most important notions in cluster algebras,
namely, a {\em seed} and its {\em mutation}.
(We do not give the definition of a {\em cluster algebra} itself,
because it is not essential in this paper.)

\begin{defn}
\label{defn:mut1}
Let us fix a positive integer $n\in \mathbb{Z}_+$ and a semifield $\mathbb{P}$,
 which are called the {\em rank} and the {\em coefficient semifield} (of a cluster algebra
under consideration).
Let $w=(w_i)_{i=1}^n$ be an $n$-tuple of formal commutative variables,
and let $\mathcal{F}=(\mathbb{Q}(\mathbb{P}))(w)$ be the rational function field of 
$w$ with coefficients in $\mathbb{Q}(\mathbb{P})$.
We call $\mathcal{F}$ the {\em ambient field}.
\par
(1).
Let $(B,x,y)$ be a triplet such that
\begin{itemize}
\item
$B=(b_{ij})_{i,j=1}^n$ is a skew-symmetrizable integer matrix  of size $n$,
\item
$x=(x_i)_{i=1}^n$ is an $n$-tuple 
of algebraically independent elements of  $\mathcal{F}$,
\item
$y=(y_i)_{i=1}^n$ is an $n$-tuple  of elements of $\mathbb{P}$.
\end{itemize}
We call such $(B,x,y)$ a {\em seed},
and call $B$, $x$, and $y$  the {\em exchange matrix},
 the {\em $x$-variables}, and the {\em $y$-variables} of a seed $(B,x,y)$,
respectively.
Also, we set
\begin{align}
\label{eq:yhat1}
\hat{y}_i
&
=
y_i \prod_{j=1}^n x_{j}^{b_{ji}}
\in \mathcal{F},
\quad
i=1,\dots,n,
\end{align}
and call them  the {\em $\hat{y}$-variables} of a seed $(B,x,y)$.
\par
(2).
For any seed $(B,x,y)$ and any $k=1,\dots,n$, we define a new seed
$(B',x',y')=\mu_k(B,x,y)$, called the {\em mutation of $(B,x,y)$ at $k$}, as follows:
\begin{align}
\label{eq:Bmut1}
b'_{ij}&=
\begin{cases}
-b_{ij} & \text{$i=k$ or $j=k$}\\ 
b_{ij}+[-b_{ik}]_+b_{kj} + b_{ik}[b_{kj}]_+
& i,j\neq k,
\end{cases}
\\
\label{eq:xmut1}
x'_i&=
\begin{cases}
\displaystyle
x_k^{-1}
\left(\prod_{j=1}^n
x_j^{[-b_{jk}]_+}
\right)
 \frac{1+ \hat{y}_k}{1\oplus y_k}
& i=k\\ 
x_i 
& i\neq k,
\end{cases}
\\
\label{eq:ymut1}
y'_i&=
\begin{cases}
y_k^{-1}& i=k\\ 
y_i y_k^{[b_{ki}]_+} (1\oplus y_k)^{-b_{ki}}
& i\neq k.
\end{cases}
\end{align}
Here and elsewhere, for any integer $a$,
we define
\begin{align}
[a]_+=\max(a,0).
\end{align}
\end{defn}

The following facts can be easily checked:

(1). The mutation $\mu_k$ is an involution,
i.e., $\mu_k(\mu_k(B,x,y))=(B,x,y)$.

(2). If $R$ is a skew-symmetrizer of $B$,
then $R$ is also a skew-symmetrizer of $B'$.

(3). The $\hat{y}$-variables  transform  in $\mathcal{F}$
as the $y$-variables;
namely,
\begin{align}
\label{eq:yhatmut1}
\hat{y}'_i&=
\begin{cases}
\hat{y}_k^{-1}& i=k\\ 
\hat{y}_i \hat{y}_k^{[b_{ki}]_+} (1+ \hat{y}_k)^{-b_{ki}}
& i\neq k.
\end{cases}
\end{align}


\subsection{Dilogarithm identities}
In this section we specialize the coefficient semifield $\mathbb{P}$
in Definition \ref{defn:mut1}
as $\mathbb{P}=\mathbb{Q}_+(y)$ in Example \ref{ex:semi1} (2) with generators
$y=(y_i)_{i=1}^n$.
Let us choose a seed
$(B,x,y)$, where $B$ and $x$ are arbitrary,
but we especially choose $y$ to be
the generators of $\mathbb{Q}_+(y)$.
Let us call $(B,x,y)$ the {\em initial seed},
and consider a sequence of mutations starting from it:
\begin{align}
\begin{split}
\label{eq:seq1}
(B,x,y)=(B[1],x[1],y[1])&
\buildrel  \mu_{k_1} \over \rightarrow
(B[2],x[2],y[2])
\buildrel  \mu_{k_2} \over \rightarrow\\
&
\cdots
\buildrel  \mu_{k_{m}} \over \rightarrow
(B[m+1],x[m+1],y[m+1]).
\end{split}
\end{align}

Let $R=\mathrm{diag}(r_1,\dots,r_n)$ be a common symmetrizer
of $B[1]$, \dots, $B[m+1]$, 
and let $r$ be the least common multiple of $r_1,\dots,r_n$.
We set $\tilde{r}_i=r /r_i\in \mathbb{Z}_+$.

\begin{defn}
We say that the sequence \eqref{eq:seq1} is {\em $\sigma$-periodic}
for a permutation $\sigma$ of $\{1,\dots,n\}$ if
\begin{align}
b_{\sigma(i)\sigma(j)}[m+1]
=b_{ij},
\quad
x_{\sigma(i)}[m+1]
=x_{i},
\quad
y_{\sigma(i)}[m+1]
=y_{i},
\quad
(i,j=1,\dots,n).
\end{align}
\end{defn}

The following fact connects  cluster algebras and dilogarithm identities.
\begin{thm}[{\cite[Proposition 6.3 and Section 6.5]{Nakanishi10c}}]
\label{thm:yconst1}
Suppose that the sequence \eqref{eq:seq1} is $\sigma$-periodic for some
permutation $\sigma$.
Then, it satisfies the following ``constancy condition" in
the wedge product $\bigwedge^2 \mathbb{Q}_+(y)$,
where we regard $\mathbb{Q}_+(y)$ as a multiplicative abelian group:
\begin{align}
\label{eq:const3}
\sum_{t=1}^{m} \tilde{r}_{k_t} 
\left(y_{k_t}[t] \wedge (1+y_{k_t}[t]) 
\right)= 0.
\end{align}
\end{thm}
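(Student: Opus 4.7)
The plan is a telescoping argument in $\bigwedge^2\mathcal{F}^\times$: attach to each seed along the sequence an auxiliary quantity $\Omega[t]$ whose mutation increment $\Omega[t+1]-\Omega[t]$ equals the summand $\tilde r_{k_t}\bigl(y_{k_t}[t]\wedge(1+y_{k_t}[t])\bigr)$. Then $\sigma$-periodicity (together with the standard compatibility $r_{\sigma(i)}=r_i$, built into the notion of a period respecting the skew-symmetrizer) forces $\Omega[m+1]=\Omega[1]$ after relabeling by $\sigma$, so the sum in \eqref{eq:const3} telescopes to zero. Working in the ambient field gives access to the $x$-variables, which seem necessary for constructing $\Omega[t]$; the final identity is then descended to $\bigwedge^2\mathbb{Q}_+(y)$, either by injectivity of the natural map on the relevant subspace or by re-running the calculation purely in $\mathbb{Q}_+(y)$ once all $x$-content has been eliminated.

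A natural first candidate is
\[
\Omega[t]:=\tfrac{1}{2}\sum_{i,j=1}^n r_i\,b_{ij}[t]\,x_i[t]\wedge x_j[t],
\]
well-defined by the skew-symmetry $r_ib_{ij}=-r_jb_{ji}$. Substituting the rearranged mutation rule
\[
x_k[t]\,x_k[t+1] = \Bigl(\prod_j x_j[t]^{[-b_{jk}[t]]_+}\Bigr)\cdot\frac{1+\hat{y}_k[t]}{1+y_k[t]}
\]
extracted from \eqref{eq:xmut1} into $\Omega[t+1]$ and expanding bilinearly, the pure $x\wedge x$ terms cancel via $r_ib_{ij}=-r_jb_{ji}$ together with the combinatorial identity $[a]_+-[-a]_+=a$, while the $(1+y_k)$-piece peels off a contribution containing $y_{k_t}\wedge(1+y_{k_t})$. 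A direct check (already visible in rank two) reveals, however, a residual mixed $x\wedge(1+\hat y_k)$ remainder, so the primitive must be augmented by a correction term such as $\sum_i r_i\,x_i[t]\wedge \hat y_i[t]$ or $\sum_i r_i\,x_i[t]\wedge y_i[t]$, tuned against the decomposition $\hat y_i=y_i\prod_j x_j^{b_{ji}}$ so that the $x$-remainders collapse and only $\tilde r_{k_t}\,y_{k_t}[t]\wedge(1+y_{k_t}[t])$ survives. The rescaling $\tilde r_{k_t}=r/r_{k_t}$ then emerges automatically from normalizing $r_ib_{ij}$ to be integer-valued and uniformly scaled.

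Once the correctly calibrated primitive is in hand, the telescoping and the descent to $\bigwedge^2\mathbb{Q}_+(y)$ are routine. \textbf{The main obstacle} is pinning down the exact form of $\Omega[t]$: the cancellation of non-$y$ remainders is global rather than local, and depends delicately on the combinatorics of \eqref{eq:Bmut1}--\eqref{eq:ymut1} and of $[\pm a]_+$, so each candidate correction must be verified against the full mutation calculation. A plausible alternative that avoids an explicit primitive is to use the Fomin--Zelevinsky separation formula expressing $y_i[t]$ via $c$-vectors and $F$-polynomials, and then to leverage sign-coherence of the $c$-vectors to expand each wedge $y_{k_t}[t]\wedge(1+y_{k_t}[t])$ and verify the cancellation across the period directly; this trades the search for $\Omega[t]$ for a more combinatorial bookkeeping argument using sign-coherence.
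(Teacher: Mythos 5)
Your overall strategy---attach a primitive $\Omega[t]$ to each seed, show its increment under $\mu_{k_t}$ is the $t$-th summand, and telescope using $\sigma$-periodicity---is exactly the shape of the proof the paper gives (in the generalized setting of Theorem \ref{thm:gyconst1}, which specializes to the present statement). But the proposal stops short of the one step that constitutes the proof: producing a primitive that actually works. You concede that the candidate $\tfrac12\sum_{i,j}r_i b_{ij}[t]\,x_i[t]\wedge x_j[t]$ leaves a residual mixed term and that the proposed corrections ($\sum_i r_i\,x_i[t]\wedge\hat y_i[t]$, etc.) are unverified, so nothing in the proposal certifies that a correctly calibrated $\Omega[t]$ exists. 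The paper's primitive is not built from raw $x$-variables but from the $F$-polynomials (the $x$-variables with tropicalized coefficients, specialized at $x_1=\cdots=x_n=1$):
\begin{align*}
V[t]=\sum_{i=1}^n \tilde r_i\,\bigl(F_i[t]\wedge y_i[t]\bigr)
+\frac12\sum_{i,j=1}^n b_{ij}[t]\,\tilde r_j\,\bigl(F_i[t]\wedge F_j[t]\bigr),
\end{align*}
as in \eqref{eq:Vt}. This choice is what makes everything close up: (i) $F_i[1]=1$ gives $V[1]=0$, and the periodicity of the $F$-polynomials gives $V[m+1]=0$, so the telescoping ends at $0$ on both sides; (ii) the increment computation (Lemma \ref{lem:Vdif1}), done with the $F$-recursion \eqref{eq:Frec1}, the $y$-mutation, and the separation formula \eqref{eq:sep1}, leaves a residual tropical term as in \eqref{eq:CC1} which vanishes \emph{only} because of sign-coherence of the $c$-vectors (Theorem \ref{thm:sign1}/\ref{thm:gsign1})---an ingredient entirely absent from your primary route; and (iii) since the $F$-polynomials are elements of $\mathbb{Q}_+(y)$, the whole computation takes place in $\bigwedge^2\mathbb{Q}_+(y)$ from the start.

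That last point also exposes a second genuine gap: your ``descent'' from $\bigwedge^2\mathcal{F}^\times$ to $\bigwedge^2\mathbb{Q}_+(y)$ is not routine. For a subgroup $H\subset G$ the induced map $\bigwedge^2 H\to\bigwedge^2 G$ need not be injective, so killing the sum in $\bigwedge^2\mathcal{F}^\times$ does not yield \eqref{eq:const3} in $\bigwedge^2\mathbb{Q}_+(y)$; and the identity is needed in $\bigwedge^2\mathbb{Q}_+(y)$ precisely so that semifield homomorphisms $\varphi:\mathbb{Q}_+(y)\to\mathbb{R}_+$ (which do not extend to $\mathcal{F}$) can be applied as in Theorem \ref{thm:id1}. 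Your alternative suggestion---use the separation formula and sign-coherence---does point toward the paper's actual mechanism, but as written it is a pointer rather than an argument. Concretely, the missing idea is: replace the $x$-variables in the primitive by $F$-polynomials, prove the increment identity by direct computation with \eqref{eq:Frec1} and the $y$-mutation rule, and invoke sign-coherence to eliminate the leftover term of the form $\bigl(\prod_i y_i^{[c_{ik_t}[t]]_+}\bigr)\wedge\bigl(\prod_i y_i^{[-c_{ik_t}[t]]_+}\bigr)$.
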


Combining Theorems \ref{thm:const2} and \ref{thm:yconst1},
we obtain the first half of the 
 dilogarithm identity associated with
any period of seeds.

\begin{thm}[{\cite[Theorems 6.4 and 6.8]{Nakanishi10c}}]
\label{thm:id1}
Suppose  that the sequence \eqref{eq:seq1} is $\sigma$-periodic for some
permutation $\sigma$.
Let 
\begin{align}
\label{eq:phi1}
\varphi: \mathbb{Q}_+(y) \rightarrow \mathbb{R}_+
\end{align}
be any semifield homomorphism.
Then, the following sum does not depend on the choice of $\varphi$:
\begin{align}
\label{eq:id4}
\sum_{t=1}^{m}
\tilde{r}_{{k}_t} L
\left(
\varphi
\left(
\frac{y_{k_t}[t]}
{1+y_{k_t}[t]}
\right)
\right).
\end{align}
\end{thm}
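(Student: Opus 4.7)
The plan is to reduce Theorem \ref{thm:id1} to Theorems \ref{thm:const2} and \ref{thm:yconst1} by interpolating between any two given semifield homomorphisms through a one-parameter family $\{\varphi_u\}_{u\in[0,1]}$ and showing that the sum \eqref{eq:id4} is constant along it. Since a semifield homomorphism $\varphi: \mathbb{Q}_+(y) \to \mathbb{R}_+$ is determined by its values on the generators, for any two homomorphisms $\varphi_0, \varphi_1$ I would set
\begin{align}
\varphi_u(y_i) = \varphi_0(y_i)^{1-u}\,\varphi_1(y_i)^u,
\quad u \in [0,1], \ i=1,\dots,n,
\end{align}
and extend uniquely to a semifield homomorphism $\varphi_u: \mathbb{Q}_+(y) \to \mathbb{R}_+$ for each $u$.

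Next I would assemble this family into a single multiplicative map $\Phi: \mathbb{Q}_+(y) \to \mathcal{C}$ by sending $f$ to the function $u \mapsto \varphi_u(f)$. For any $f = P/Q \in \mathbb{Q}_+(y)$ with $P, Q$ subtraction-free polynomials, the image $\Phi(f)$ is manifestly a positive, differentiable function of $u$, so $\Phi(f) \in \mathcal{C}$. Since each $\varphi_u$ is a semifield homomorphism, $\Phi$ respects the addition of $1$, i.e.\ $\Phi(1 + y_{k_t}[t]) = 1 + \Phi(y_{k_t}[t])$ in $\mathcal{C}$. Applying the induced map $\Phi_*: \bigwedge^2 \mathbb{Q}_+(y) \to \bigwedge^2 \mathcal{C}$ to the constancy relation \eqref{eq:const3} of Theorem \ref{thm:yconst1} therefore yields
\begin{align}
\sum_{t=1}^m \tilde{r}_{k_t} \bigl( \Phi(y_{k_t}[t]) \wedge (1 + \Phi(y_{k_t}[t])) \bigr) = 0
\quad
\text{in } \bigwedge^2 \mathcal{C}.
\end{align}

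Since each $\tilde{r}_{k_t}$ is a positive integer, this relation can be rewritten by listing each term $\Phi(y_{k_t}[t])$ with multiplicity $\tilde{r}_{k_t}$; Theorem \ref{thm:const2} then immediately gives that
\begin{align}
\sum_{t=1}^m \tilde{r}_{k_t}\, L\!\left(
\frac{\varphi_u(y_{k_t}[t])}{1 + \varphi_u(y_{k_t}[t])}
\right)
\end{align}
is constant in $u \in [0,1]$, and evaluating at $u=0$ and $u=1$ gives the desired equality of \eqref{eq:id4} under $\varphi_0$ and $\varphi_1$. There is no substantial obstacle in this argument: the deep content sits inside Theorems \ref{thm:yconst1} and \ref{thm:const2}, and the only care needed is to verify that $\Phi$ genuinely takes values in $\mathcal{C}$, which is automatic from the subtraction-free form of elements of $\mathbb{Q}_+(y)$ together with the explicit definition of $\varphi_u$.
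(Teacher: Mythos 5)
Your proposal is correct and follows essentially the same route as the paper: interpolate between the two homomorphisms through a one-parameter family $\varphi_u$ determined by its values on the generators, push the constancy relation of Theorem \ref{thm:yconst1} into $\bigwedge^2\mathcal{C}$, and invoke Theorem \ref{thm:const2}. The only (immaterial) difference is that you interpolate the generator values geometrically, $\varphi_u(y_i)=\varphi_0(y_i)^{1-u}\varphi_1(y_i)^u$, while the paper uses the linear interpolation $(1-u)\varphi_0(y_i)+u\varphi_1(y_i)$; either choice yields a family of semifield homomorphisms with differentiable, positive dependence on $u$, which is all the argument needs.
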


\begin{rem}
Theorem 6.8 in \cite{Nakanishi10c} was proved under the assumption
of the sign-coherence property of the $c$-vectors,
which  is now proved by \cite{Gross14}.
See Theorem \ref{thm:sign1} below.
\end{rem}

The second half of the dilogarithm identity is about the constant value of \eqref{eq:id4}.
To describe it,
we introduce the semifield homomorphism ({\em tropicalization map})
\begin{align}
\begin{matrix}
\pi: &  \mathbb{Q}_+(y) & \rightarrow &\mathrm{Trop}(y)\\
& y_i & \mapsto & y_i.
\end{matrix}
\end{align}
We then apply the map $\pi$ to each $y$-variable $y_i[t]$
in \eqref{eq:seq1}, and express it as
\begin{align}
\label{eq:tropy1}
\pi(y_i[t])=\prod_{j=1}^n y_j^{c_{ji}[t]}.
\end{align}
Thus, we have a family of square matrices $C[t]=(c_{ij}[t])_{i,j=1}^n$ for
$t=1,\dots,m+1$,
which are called the {\em $C$-matrices} for the sequence \eqref{eq:seq1}.
Alternatively, they can be directly defined through the following system of recursion relations
\cite{Fomin07}:
\par
(initial condition)
\begin{align}
c_{ij}[1]=\delta_{ij},
\end{align}
\par
(recursion relation)
\begin{align}
c_{ij}[t+1]=
\begin{cases}
-c_{ik_t}[t]
& j=k_t\\
c_{ij}[t]
+
[-c_{ik_t}[t]]_+ b_{k_tj}[t]
+ c_{ik_t}[t]
[b_{k_tj}[t]]_+
&
j\neq k_t.
\end{cases}
\end{align}
The $i$th column vector $c_i[t]=(c_{ji}[t])_{j=1}^n$ of the matrix $C[t]$
is called the {\em $c$-vector} of $y_i[t]$.
By the definition of  \eqref{eq:tropy1},
it is the ``exponent vector" of
the tropical $y$-variable $\pi(y_i[t])$.

The following property is fundamental in the theory of cluster algebras,
and it is
originally conjectured by \cite{Fomin07} and proved in full generality recently:

\begin{thm}[(Sign coherence) {\cite[Corollary 5.5]{Gross14}}]
\label{thm:sign1}
Each $c$-vector $c_i[t]$ is a nonzero vector, and all its components
are either nonnegative or nonpositive.
\end{thm}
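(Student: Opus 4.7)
The plan is to establish sign coherence via the cluster scattering diagram machinery of Gross--Hacking--Keel--Kontsevich. As is apparent from the recursion $c_{ij}[1]=\delta_{ij}$ together with the explicit update rule given in the paper, the $c$-vectors depend only on the exchange matrix $B$ and the mutation sequence $k_1,\dots,k_m$, not on the coefficient semifield. Hence I may work throughout with principal coefficients, in which setting $c_i[t]$ acquires its most geometric meaning.

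First I would construct the cluster scattering diagram $\mathfrak{D}(B)$: a locally finite collection of codimension-one rational polyhedral cones (walls) inside a real vector space $M_{\mathbb{R}}\cong\mathbb{R}^n$, each decorated with a wall-crossing automorphism lying in a suitable pro-nilpotent group, built inductively so that the path-ordered product of wall-crossings around any closed loop is trivial. One starts from the ``incoming'' walls $e_k^{\perp}$ carrying the wall-functions $1+z^{e_k}$, and adjoins new walls order by order to enforce consistency via the Kontsevich--Soibelman commutator formula.

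Next I would set up the combinatorial dictionary between mutation sequences and this diagram: piecewise-linear paths starting in the positive chamber of $\mathfrak{D}(B)$ correspond to mutation sequences from the initial seed; $g$-vectors of cluster variables label the rays of chambers; and the primitive normal vectors to the walls bounding the chamber reached at step $t$ are precisely the $c$-vectors $c_1[t],\dots,c_n[t]$. In particular, each $c_i[t]$ is realized as the primitive normal vector to some wall of $\mathfrak{D}(B)$, and in particular is nonzero.

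The hard part will be the geometric structural result at the core of \cite{Gross14}: every wall of $\mathfrak{D}(B)$ is supported in a closed half-space of $M_{\mathbb{R}}$, meaning its primitive normal vector has all coordinates of one sign. Combined with the dictionary above, this yields exactly the sign coherence of $c$-vectors. The proof of the half-space property proceeds by induction on the order in the pro-nilpotent group: any wall introduced by the consistency construction inherits the half-space property from the colliding walls, because the Kontsevich--Soibelman commutators only generate new wall-functions supported on nonnegative integer combinations of the existing normals. The subsidiary obstacle is the bookkeeping that matches these geometric normals to the recursively defined $c$-vectors of the paper, so that a mutation at $k_t$ corresponds to crossing the wall whose primitive normal is $\pm c_{k_t}[t]$; checking that this identification is compatible with the recursion for $c_{ij}[t+1]$ is essentially mechanical but requires care with signs.
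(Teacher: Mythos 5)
The paper does not prove this statement at all: it is imported verbatim from \cite{Gross14} (Corollary 5.5), and your proposal is essentially an outline of that reference's own scattering-diagram argument --- principal coefficients, the cluster scattering diagram with walls supported in hyperplanes normal to elements of $N^{+}$, and the identification of the $c$-vectors with the primitive normals of the walls bounding the chamber reached by the mutation path. So your route coincides with the one the paper relies on; just be aware that the genuinely hard content (the consistent construction of the diagram and the chamber--mutation dictionary compatible with the recursion for $c_{ij}[t+1]$) is exactly what \cite{Gross14} supplies, and your sketch defers to it rather than replacing it.
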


Accordingly, we set the {\em tropical sign\/}
$\varepsilon(y_i[t])\in \{ \pm 1 \}$ of $y_i[t]$ as $1$ (resp. $-1$) if 
all components of  $c_i[t]$ is nonnegative (resp. nonpositive).
For simplicity, let us write
\begin{align}
\varepsilon_t=\varepsilon(y_{k_t}[t]).
\end{align}

Now,
continuing from
Theorem \ref{thm:id1},
we can state the second half of the dilogarithm identity.
\begin{thm}[{\cite[Theorems 6.4 and 6.8]{Nakanishi10c}}]
\label{thm:id2}
Under the assumption of Theorem \ref{thm:id1},
we have the following equality for any choice of $\varphi$ in \eqref{eq:phi1}:
\begin{align}
\label{eq:id5}
\sum_{t=1}^{m}
\tilde{r}_{{k}_t} L
\left(
\varphi
\left(
\frac{y_{k_t}[t]}
{1+y_{k_t}[t]}
\right)
\right)
=
\sum_{t=1}^{m}
\frac{1-\varepsilon_t}{2}
\tilde{r}_{{k}_t} L(1).
\end{align}
\end{thm}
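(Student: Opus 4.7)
The plan is to use the independence statement of Theorem~\ref{thm:id1} by evaluating the sum \eqref{eq:id4} along a one-parameter family of semifield homomorphisms $\varphi_\epsilon : \mathbb{Q}_+(y) \to \mathbb{R}_+$ and pinning down its (necessarily constant) value via the limit $\epsilon \to 0^+$. Fix strictly positive reals $a_1, \dots, a_n$ and set $\varphi_\epsilon(y_i) = \epsilon^{a_i}$. For each $\epsilon > 0$ this is a legitimate choice of $\varphi$, so by Theorem~\ref{thm:id1} the sum on the left-hand side of \eqref{eq:id5} already equals some constant $C$, independent of $\epsilon$. It therefore suffices to compute the limit of each summand as $\epsilon \to 0^+$ and then sum, the sum being finite.

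The central analytic input is the asymptotic behavior of $\varphi_\epsilon(y_{k_t}[t])$ as $\epsilon \to 0^+$. Using the separation formula for $y$-variables in principal coefficients, one may write
\[
y_{k_t}[t] = \left( \prod_{j=1}^n y_j^{c_{j,k_t}[t]} \right) \cdot G_t(y),
\]
where $G_t(y) \in \mathbb{Q}_+(y)$ is a ratio of two polynomials each having constant term $1$. Hence $\varphi_\epsilon(G_t(y)) \to 1$ while $\varphi_\epsilon\!\left(\prod_j y_j^{c_{j,k_t}[t]}\right) = \epsilon^{\sum_j c_{j,k_t}[t] a_j}$. By the sign coherence of $c$-vectors (Theorem~\ref{thm:sign1}) combined with the positivity of the $a_j$, the exponent $\sum_j c_{j,k_t}[t] a_j$ is strictly positive when $\varepsilon_t = +1$ and strictly negative when $\varepsilon_t = -1$. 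Consequently $\varphi_\epsilon(y_{k_t}[t]) \to 0$ in the former case and $\varphi_\epsilon(y_{k_t}[t]) \to +\infty$ in the latter.

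Since $y \mapsto y/(1+y)$ extends continuously to $[0,\infty]$ with $0 \mapsto 0$ and $\infty \mapsto 1$, and $L$ is continuous on $[0,1]$ with $L(0) = 0$ and $L(1) = \pi^2/6$, each summand in \eqref{eq:id4} converges to $\tilde{r}_{k_t}\cdot\tfrac{1-\varepsilon_t}{2} L(1)$. Summing over $t$ gives the right-hand side of \eqref{eq:id5}, and the identity follows because the left-hand side was constant in $\epsilon$ to begin with. The main obstacle is the rigorous justification of the asymptotic $\varphi_\epsilon(y_{k_t}[t]) \sim \epsilon^{\sum_j c_{j,k_t}[t] a_j}$. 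The cleanest route is the separation formula quoted above, which is a standard consequence of the principal-coefficient formalism; an alternative proof proceeds by induction on $t$ using the mutation rule \eqref{eq:ymut1} and sign coherence, checking at each step that $\varphi_\epsilon(1 + y_{k_s}[s])$ tends to $1$ when $\varepsilon_s = +1$ and is asymptotically equivalent to $\varphi_\epsilon(y_{k_s}[s])$ when $\varepsilon_s = -1$.
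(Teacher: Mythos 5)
Your proposal is correct and follows essentially the same route as the paper's proof (given there for the generalized version, Theorem \ref{thm:gid2}): invoke the first half (Theorem \ref{thm:id1}) to reduce to a limit along a degenerating family of semifield homomorphisms, and determine $\lim \varphi(y_{k_t}[t]) \in \{0,\infty\}$ via the separation formula, the constant-term-$1$ property of the $F$-polynomials, and sign coherence, so that each summand tends to $\tilde{r}_{k_t}\frac{1-\varepsilon_t}{2}L(1)$. The only cosmetic difference is your choice of family $\varphi_\epsilon(y_i)=\epsilon^{a_i}$ in place of the paper's rescaling $\varphi_u(y_i)=u\,\varphi(y_i)$, which changes nothing in the argument.
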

Using \eqref{eq:Rogers6},
we also have an alternative form of 
the identity \eqref{eq:id5},
which is constant-term free.

\begin{thm}[{\cite[Theorem 2.9]{Kashaev11}}]
\label{thm:id21}
The identity  \eqref{eq:id5} is equivalent to the following one:
\begin{align}
\label{eq:id6}
\sum_{t=1}^{m}
\varepsilon_t
\tilde{r}_{{k}_t} L
\left(
\varphi
\left(
\frac{(y_{k_t}[t])^{\varepsilon_t}}
{1+(y_{k_t}[t])^{\varepsilon_t}}
\right)
\right)
=
0.
\end{align}
\end{thm}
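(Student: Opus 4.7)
The plan is to transform \eqref{eq:id5} into \eqref{eq:id6} term by term, using only the duality \eqref{eq:Rogers6} applied to each index $t$ separately. Since the semifield homomorphism $\varphi: \mathbb{Q}_+(y)\rightarrow\mathbb{R}_+$ sends every $y_{k_t}[t]$ to a positive real number, the identity \eqref{eq:Rogers6} may be invoked at the point $x=\varphi(y_{k_t}[t])$, and the expressions $(y^{\pm1})/(1+y^{\pm1})$ all have subtraction-free form so $\varphi$ acts on them unambiguously.

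The key observation I would then record is the single-term formula
\begin{align*}
L\!\left(\varphi\!\left(\tfrac{y_{k_t}[t]}{1+y_{k_t}[t]}\right)\right)
&=
\tfrac{1-\varepsilon_t}{2}\, L(1)
+ \varepsilon_t\,
L\!\left(\varphi\!\left(\tfrac{(y_{k_t}[t])^{\varepsilon_t}}{1+(y_{k_t}[t])^{\varepsilon_t}}\right)\right).
\end{align*}
When $\varepsilon_t=+1$ this is a tautology. When $\varepsilon_t=-1$, one uses $\tfrac{1}{1+y}=\tfrac{y^{-1}}{1+y^{-1}}$ to rewrite the right-hand side as $L(1)-L(\varphi(\tfrac{y}{1+y}))$, and the asserted formula becomes exactly \eqref{eq:Rogers6}.

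Multiplying both sides by $\tilde{r}_{k_t}$ and summing over $t=1,\dots,m$, the left-hand side of \eqref{eq:id5} is rewritten as
\begin{align*}
\sum_{t=1}^{m}\tfrac{1-\varepsilon_t}{2}\,\tilde{r}_{k_t}\, L(1)
+\sum_{t=1}^{m}\varepsilon_t\,\tilde{r}_{k_t}\,
L\!\left(\varphi\!\left(\tfrac{(y_{k_t}[t])^{\varepsilon_t}}{1+(y_{k_t}[t])^{\varepsilon_t}}\right)\right).
\end{align*}
Subtracting the right-hand side of \eqref{eq:id5} cancels the constant-term sum, and what remains is precisely \eqref{eq:id6}. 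The converse direction is just reading the same chain of equalities backwards.

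There is no real obstacle here: once \eqref{eq:Rogers6} is available, the proof is a routine case-check on the sign $\varepsilon_t\in\{\pm 1\}$. The only point worth verifying carefully is that the rewriting is purely algebraic at the level of positive real arguments, so questions about branches of $L$ or about the semifield structure of $\mathbb{Q}_+(y)$ do not intervene.
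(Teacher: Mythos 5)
Your argument is correct and is essentially the proof the paper intends: the equivalence is obtained by applying the duality \eqref{eq:Rogers6} termwise according to the tropical sign $\varepsilon_t$ (just as the generalized Theorem \ref{thm:gid3} is obtained from \eqref{eq:gid5} via \eqref{eq:Rogers9}), and your single-term case-check on $\varepsilon_t=\pm1$ together with the cancellation of the constant-term sum is exactly that computation. Your remark that $(y_{k_t}[t])^{\pm1}/(1+(y_{k_t}[t])^{\pm1})$ is subtraction-free, so $\varphi$ applies unambiguously, is the right point to verify and suffices.
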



We are going to generalize
Theorems
\ref{thm:id2} and \ref{thm:id21}
based on generalized cluster algebras.

\section{Rogers dilogarithms of higher degree}

In this section we introduce the Rogers dilogarithms of higher degree.
Then, we prove the constancy condition
which is parallel to Theorem \ref{thm:const2}.

\subsection{Rogers dilogarithms of higher degree}
Let $d$ be any positive integer, and let $z=(z_s)_{s=0}^{d}$
such that $z_0=z_d=1$ and $z_1,\dots,z_{d-1}\in \mathbb{R}_{\geq 0}$
are arbitrary.
Let $P_{d,z}(x)$ be
the polynomial 
in a single variable $x$
defined by
\begin{align}
\label{eq:poly1}
P_{d,z}(x)=\sum_{s=0}^d z_s x^s.
\end{align}
Below we assume the following {\em generic condition} for $z$:
\begin{align}
\label{eq:generic1}
\text{The polynomial $P_{d,z}(x)$
has no root  on $\mathbb{R}$ except for $x=-1$.}
\end{align}
In \cite{Nakanishi14a}
 the {\em (Euler) dilogarithm $\mathrm{Li}_{2;d,z}(x)$
 of degree $d$ with coefficients $z$} is defined as follows
(cf. \eqref{eq:Li2int1}):
\begin{align}
\label{eq:hLi2int1}
\mathrm{Li}_{2;d,z}(x)=
-\int_0^{-x} \frac{\log P_{d,z}(y)}{y}dy,
\quad
(x\leq 1).
\end{align}
By the condition \eqref{eq:generic1},
there is no ambiguity of multivaluedness 
in the region $x\leq 1$.

In view of \eqref{eq:Rogers4} and \eqref{eq:Rogers3}
we define the {\em Rogers dilogarithm $L_{d,z}(x)$ of degree $d$
with coefficients $z$} as follows:
\begin{align}
\label{eq:Rogers8}
L_{d,z}\left(\frac{x^d}{P_{d,z}(x)}\right)&=
\frac{1}{2}\int_0^{x} 
\left\{
\frac{\log P_{d,z}(y)}{y}
-
\frac{\log y}{P_{d,z}(y)}
P'_{d,z}(y)
\right\}dy,
\quad
(0\leq x),
\\
\label{eq:Rogers7}
&=- \mathrm{Li}_{2;d,z}(-x)
 - \frac{1}{2}\log x\cdot \log P_{d,z}(x),
\end{align}
where $P'_{d,z}(x)$ denotes the derivative of $P_{d,z}(x)$.
Since the function $x^d/P_{d,z}(x)$ is monotonic on $\mathbb{R}_{\geq  0}$
and yields a bijection
from $\mathbb{R}_{\geq  0}$ to $[0,1)$, 
\eqref{eq:Rogers8} and \eqref{eq:Rogers7} unambiguously
determine the function $L_{d,z}(x)$ on $[0,1]$, where $L_{d,z}(1)$ is defined 
 by the limit
 $x\rightarrow \infty$ in \eqref{eq:Rogers8}.

For the above $z=(z_s)_{s=0}^d$,
we define its {\em reverse} $z^*$ as
\begin{align}
z^*=(z^*_s)_{s=0}^d, \quad z^*_s =z_{d-s}.
\end{align}
Clearly, it holds that $(z^*)^*=z$.
Also, we have
\begin{align}
\label{eq:dual3}
P_{d,z^*}(x^{-1})&= x^{-d} P_{d,z}(x),\\
\label{eq:dual4}
P'_{d,z^*}(x)\vert_{x= x^{-1}}&= -x^{2-d} P'_{d,z}(x)
+ d x^{1-d}P_{d,z}(x).
\end{align}
By \eqref{eq:dual3}, $a\neq 0$ is a root of $P_{d,z^*}(x)$ if and only if
$a^{-1}$ is a root of $P_{d,z}(x)$.
Thus, $z^*$ also satisfies the generic condition \eqref{eq:generic1}.

The following is the counterpart of the equalities \eqref{eq:Rogers6}
and \eqref{eq:dual1}:
\begin{prop}
We have the equalities
\begin{gather}
\label{eq:Rogers9}
L_{d,z}\left(\frac{x^d}{P_{d,z}(x)}\right) + L_{d,z^*}\left(\frac{1}{P_{d,z}(x)}\right)=
 L_{d,z}(1)=L_{d,z^*}(1),
\quad
(0\leq x ),\\
\label{eq:dual2}
\frac{1}{P_{d,z}(x)}=\frac{x^d}{P_{d,z^*}(x)}\Bigr|_{x=x^{-1}}.
\end{gather}
\end{prop}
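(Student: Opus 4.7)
My plan is to dispatch the duality of variables \eqref{eq:dual2} first by a direct calculation, and then to prove the main identity \eqref{eq:Rogers9} by showing that its left-hand side is constant in $x$ and identifying the constant via boundary evaluation.

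For \eqref{eq:dual2}, substituting $x \mapsto x^{-1}$ into $x^d/P_{d,z^*}(x)$ yields $x^{-d}/P_{d,z^*}(x^{-1})$, and an application of \eqref{eq:dual3} collapses this to $1/P_{d,z}(x)$. This is a one-line computation.

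For \eqref{eq:Rogers9}, I would first use \eqref{eq:dual2} to rewrite
\[
L_{d,z^*}\!\left(\frac{1}{P_{d,z}(x)}\right)
= L_{d,z^*}\!\left(\frac{u^d}{P_{d,z^*}(u)}\right),
\qquad u = x^{-1},
\]
so that both summands sit in the canonical form of \eqref{eq:Rogers8} and can be differentiated via their integrands. Differentiating the first summand in $x$ gives the integrand of \eqref{eq:Rogers8} evaluated at $y=x$; differentiating the second, by the chain rule with $du/dx = -x^{-2}$, gives $-x^{-2}$ times the integrand evaluated at $y=u$. I would then substitute $u=x^{-1}$ and invoke \eqref{eq:dual3} to rewrite $\log P_{d,z^*}(u) = -d\log x + \log P_{d,z}(x)$ and $\log u = -\log x$, together with \eqref{eq:dual4} for $P'_{d,z^*}(u)$ in terms of $P_{d,z}(x)$ and $P'_{d,z}(x)$. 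After these substitutions, the $(-d\log x)/x$ contributions produced by the two halves of the second integrand cancel against each other, and what remains is exactly the negative of the derivative of the first summand, so the total derivative vanishes.

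Once constancy is established, the constant value is read off at the two ends of the range. At $x=0$ the first summand vanishes, and $P_{d,z}(0)=z_0=1$ makes the second equal to $L_{d,z^*}(1)$; as $x\to\infty$, $x^d/P_{d,z}(x) \to 1/z_d = 1$ makes the first equal to $L_{d,z}(1)$, while $1/P_{d,z}(x) \to 0$ makes the second vanish. Equating the two boundary values yields both $L_{d,z}(1) = L_{d,z^*}(1)$ and the stated sum formula. The main obstacle is simply the algebraic bookkeeping in the derivative cancellation, which demands some care with the signs coming from $\log u = -\log x$ and the chain-rule factor $du/dx = -x^{-2}$; but once \eqref{eq:dual3} and \eqref{eq:dual4} are plugged in, the collapse is routine.
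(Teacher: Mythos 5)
Your proposal is correct and follows essentially the same route as the paper: \eqref{eq:dual2} is read off from \eqref{eq:dual3}, and \eqref{eq:Rogers9} is obtained by rewriting the second summand via \eqref{eq:dual2}, showing the $x$-derivative of the sum vanishes using the integrand of \eqref{eq:Rogers8} together with \eqref{eq:dual3} and \eqref{eq:dual4}, and then identifying the constant from the two boundary values $x=0$ and $x\to\infty$. The cancellation you describe (including the chain-rule factor $-x^{-2}$ and the cancelling $d\log x$ terms) is exactly the computation the paper performs in \eqref{eq:der1}.
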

\begin{proof}
The equality \eqref{eq:dual2} is immediate from \eqref{eq:dual3}.
Let us show \eqref{eq:Rogers9}.
First, we show that the left-hand side of \eqref{eq:Rogers9} does not
depend on $x$. To do it, we  apply \eqref{eq:dual2}
in the second term of the left-hand side of \eqref{eq:Rogers9},
then take the derivative with respect to $x$.
Then, by \eqref{eq:Rogers8}, we obtain
\begin{align}
\label{eq:der1}
\begin{split}
\frac{1}{2}
&\left(
\frac{\log P_{d,z}(x)}{x}
-
\frac{\log x}{P_{d,z}(x)}
P'_{d,z}(x)
\right)\\
&
- \frac{1}{2}
x^{-2}
\left(
\frac{\log P_{d,z^*}(x^{-1})}{x^{-1}}
-
\frac{\log x^{-1}}{P_{d,z^*}(x^{-1})}
\left(
P'_{d,z^*}(x)\Bigr|_{x=x^{-1}}
\right)
\right).
\end{split}
\end{align}
Then, using \eqref{eq:dual3} and \eqref{eq:dual4},
it is easy to check that \eqref{eq:der1} vanishes.
Thus, the left-hand side of \eqref{eq:Rogers9} is
a constant $C$ with respect to $x$. Then, taking $x\rightarrow \infty$ in it, we have
 $C=L_{d,z}(1)$,
while setting $x=0$, we have  $C=L_{d,z^*}(1)$.
\end{proof}

Let us also introduce the function $\tilde{L}_{d,z}(x)$ by
\begin{align}
\label{eq:Rogers10}
\tilde{L}_{d,z}(x):=
L_{d,z}\left(\frac{x^d}{P_{d,z}(x)}\right),
\quad
(0\leq x).
\end{align}
Then, by \eqref{eq:dual2}, the equality \eqref{eq:Rogers9} is written as
\begin{gather}
\label{eq:Rogers11}
\tilde{L}_{d,z}(x) + \tilde{L}_{d,z^*}(x^{-1})=
 \tilde{L}_{d,z}(\infty)=\tilde{L}_{d,z^*}(\infty),
\quad
(0\leq x ),
\end{gather}
where $ \tilde{L}_{d,z}(\infty):=\lim_{x\to \infty}  \tilde{L}_{d,z}(x)$.
Though
we are attached to the function $L_{d,z}(x)$,
since it is more directly related to
the classic Rogers dilogarithm $L(x)$,
all results in this paper are more simply described with the function $\tilde{L}_{d,z}(x)$
as \eqref{eq:Rogers11}.
So, from now on, we mainly use the function $\tilde{L}_{d,z}(x)$
instead of $L_{d,z}(x)$.

\subsection{Constancy condition}
\label{subsec:const2}

Recall that
$\mathcal{C}=\mathcal{C}([0,1],\mathbb{R}_+)$ is the set of all 
 positive-real-valued and differentiable functions on the interval $[0,1]$
 as defined in Section \ref{subsec:const1}.

\begin{thm}[cf. Theorem \ref{thm:const2}]
\label{thm:gconst2}
Let $y_1$, \dots, $y_m \in \mathcal{C}$,
and, for $t=1,\dots,m$,
 let $P_{d_t,z_t}(x)$ be
a  degree $d_t$ polynomial in $x$ whose coefficients $z_t$ 
satisfy the generic condition \eqref{eq:generic1}.
Suppose that they satisfy the following  relation in $\bigwedge^2\mathcal{C}$:
 (Constancy condition)
\begin{align}
\label{eq:const4}
\sum_{t=1}^m y_t \wedge P_{d_t,z_t}(y_t) = 0.
\end{align}
Then, the  sum of the Rogers dilogarithms of higher degree
\begin{align}
\label{eq:sum3}
\sum_{t=1}^m \tilde{L}_{d_t,z_t}
\left(y_t(u)
\right)
\end{align}
is constant as a function of $u\in [0,1]$.
\end{thm}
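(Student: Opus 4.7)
The plan is to mirror the proof of Theorem \ref{thm:const2}: show that the $u$-derivative of the sum \eqref{eq:sum3} vanishes identically, by translating the algebraic identity \eqref{eq:const4} in $\bigwedge^2 \mathcal{C}$ into an identity of differential 1-forms on $[0,1]$. First I would differentiate $\tilde{L}_{d_t,z_t}(y_t(u))$ using the integral formula \eqref{eq:Rogers8} and the chain rule, which is precisely what that integral representation was designed for. The computation yields
\[
\frac{d\tilde{L}_{d_t,z_t}(y_t(u))}{du}\,du = \frac{1}{2}\bigl(\log P_{d_t,z_t}(y_t)\,d\log y_t - \log y_t\,d\log P_{d_t,z_t}(y_t)\bigr),
\]
where I use that $y_t(u)>0$ together with the nonnegativity of the coefficients of $P_{d_t,z_t}$ guarantees $P_{d_t,z_t}(y_t(u))>0$, so the logarithms are real and smooth.

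Next I would introduce the group homomorphism
\[
\rho(f\wedge g) = \log f\,d\log g - \log g\,d\log f
\]
from $\bigwedge^2\mathcal{C}$ to the abelian group of differential 1-forms on $[0,1]$. Well-definedness reduces to two routine checks: the assignment $f\otimes g\mapsto \log f\,d\log g$ respects the bilinearity relations \eqref{eq:tensor1} thanks to $\log(fg)=\log f+\log g$, and its antisymmetrization $\rho$ is manifestly antisymmetric, hence descends to the wedge quotient $\bigwedge^2\mathcal{C} = (\mathcal{C}\otimes\mathcal{C})/S^2\mathcal{C}$. Comparing with the derivative formula above shows that each summand contributes $-\tfrac{1}{2}\rho(y_t\wedge P_{d_t,z_t}(y_t))$ to the total differential, so the constancy condition \eqref{eq:const4} immediately forces
\[
d\Bigl(\sum_{t=1}^m \tilde{L}_{d_t,z_t}(y_t(u))\Bigr) = -\frac{1}{2}\,\rho\Bigl(\sum_{t=1}^m y_t\wedge P_{d_t,z_t}(y_t)\Bigr) = 0.
\]

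I expect no substantive obstacle: the argument is parallel to that of Theorem \ref{thm:const2}, with $1+y_t$ simply replaced by $P_{d_t,z_t}(y_t)$, and the integrand of \eqref{eq:Rogers8} playing the role of the integrand of \eqref{eq:Rogers4}. The main points requiring care are the sign-and-factor bookkeeping that identifies the derivative of $\tilde{L}_{d_t,z_t}(y_t)$ with the antisymmetric 1-form $-\tfrac{1}{2}\rho(y_t \wedge P_{d_t,z_t}(y_t))$, and the verification that $\rho$ actually descends from the tensor product to the wedge quotient. The generic condition \eqref{eq:generic1} plays no role here; it was needed only in the single-variable definition of $L_{d,z}$ to avoid multivaluedness outside of the interval on which the integrand of \eqref{eq:Rogers8} is real.
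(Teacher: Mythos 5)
Your proposal is correct and takes essentially the same approach as the paper: differentiate $\tilde{L}_{d_t,z_t}(y_t(u))$ via \eqref{eq:Rogers8} and use the relation \eqref{eq:const4} in $\bigwedge^2\mathcal{C}$ to cancel the antisymmetric combination $\log P_{d_t,z_t}(y_t)\,d\log y_t-\log y_t\,d\log P_{d_t,z_t}(y_t)$. The only difference is packaging: you build a homomorphism $\rho$ from $\bigwedge^2\mathcal{C}$ to $1$-forms, while the paper writes the symmetric part explicitly as $\sum_i(g_i\otimes h_i+h_i\otimes g_i)$ and applies the two-point evaluation $f\otimes g\mapsto \log f(u)\cdot\log g(v)$, differentiating in $u$ and $v$ separately and setting $v=u$ --- the mathematical content is identical.
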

\begin{proof}
The proof essentially repeats the one for Theorem \ref{thm:const1}
due to \cite{Frenkel95},
whose idea originates in
 \cite{Bloch78}.
By \eqref{eq:Rogers8}, for each $t=1, \dots, m$, we have
\begin{align}
\label{eq:dL1}
\begin{split}
\frac{d}{du}
 \tilde{L}_{d_t,z_t}
\left( y_t(u)\right)
&= 
\frac{1}{2}
\big(
\log P_{d_t,z_t}(y_t(u))\cdot \frac{d}{du}
 \log y_t(u)\\
 &\qquad
-
\log y_t(u)\cdot \frac{d}{du}
\log P_{d_t,z_t}(y_t(u))
\big).
\end{split}
\end{align}
On the other hand, by the assumption of \eqref{eq:const4},
\begin{align}
\label{eq:sym1}
\sum_{t=1}^m
y_t \otimes P_{d_t,z_t}(y_t)
=
\sum_{i=1}^k
( g_i \otimes h_i + h_i \otimes g_i)
\end{align}
for some $k\geq 1$ and $g_i, h_i\in \mathcal{C}$.
For any $u,v\in [0,1]$, we have an additive group homomorphism
$\Psi_{u,v}:
\mathcal{C}\otimes\mathcal{C}
\rightarrow \mathbb{R}$,
$f\otimes g \mapsto \log f(u) \cdot
\log g(v)$.
Applying it on 
\eqref{eq:sym1}, we have
\begin{align}
\label{eq:sym2}
\sum_{t=1}^m
\log y_t(u)\cdot \log P_{d_t,z_t}(y_t(v))
=
\sum_{i=1}^k
\big( \log g_i(u) \cdot \log h_i(v) + \log h_i(u) \cdot \log g_i(v)
\big).
\end{align}
Then, taking the derivative for $u$ and setting $v=u$, we have
\begin{align}
\label{eq:sym3}
\begin{split}
&\sum_{t=1}^m
\frac{d}{du}
 \log y_t(u)\cdot \log P_{d_t,z_t}(y_t(u))\\
&=
\sum_{i=1}^k
\left(\frac{d}{du}
  \log g_i(u) \cdot \log h_i(u) + \frac{d}{du}
 \log h_i(u) \cdot \log g_i(u)
\right).
\end{split}
\end{align}
Similarly,
taking the derivative for $v$ and setting $v=u$, we have
\begin{align}
\label{eq:sym4}
\begin{split}
&
\sum_{t=1}^m
 \log y_t(u)\cdot\frac{d}{du}
 \log P_{d_t,z_t}(y_t(u))
 \\
&=
\sum_{i=1}^k
\left(  \log g_i(u) \cdot\frac{d}{du}
\log h_i(u) + \log h_i(u) \cdot\frac{d}{du}
 \log g_i(u)
\right).
\end{split}
\end{align}
Note that the right-hand sides of
\eqref{eq:sym3} and \eqref{eq:sym4} are identical.
Thus, by \eqref{eq:dL1},
\eqref{eq:sym3}, and \eqref{eq:sym4},
we obtain the equality
\begin{align}
\frac{d}{du}
\sum_{t=1}^m
 \tilde{L}_{d_t,z_t}
\left( y_t(u)\right)
=0.
\end{align}
\end{proof}

\section{Identities associated with periods of generalized cluster algebras}

In this section we show Statement (II') in Section 1;
that is,
there is a dilogarithm identity of higher degree associated with any period of seeds
of a generalized cluster algebra.

\subsection{Seed mutations for generalized cluster algebras}
\label{subsec:seed2}

Generalized cluster algebras were introduced by \cite{Chekhov11}.
Here we present the definitions
of a {\em seed} and its {\em mutation} for a generalized cluster algebra,
following \cite{Nakanishi14a,Nakanishi15}.
(Again, we do not give the definition of a {\em generalized cluster algebra} itself,
because it is not essential in this paper.)

\begin{defn}
\label{defn:mut2}
Let us fix a positive integer $n$ and a semifield $\mathbb{P}$,
 which are called the {\em rank} and the {\em coefficient semifield} (of a generalized
 cluster algebra
under consideration).
In addition, we also fix an $n$-tuple $d=(d_i)_{i=1}^n$
of positive integers, which is called the {\em mutation degree}.
For the simplest choice $d=(1,\dots,1)$,
 it reduces to the ordinary cluster algebra case.
Let $w=(w_i)_{i=1}^n$ be an $n$-tuple of formal commutative variables
and let $\mathcal{F}=(\mathbb{Q}(\mathbb{P}))(w)$ be the rational function field of 
$w$ with coefficients in $\mathbb{Q}(\mathbb{P})$,
which is called the {\em ambient field}.
\par
(1).
Let $(B,x,y,z)$ be a quartet such that
\begin{itemize}
\item
$B=(b_{ij})_{i,j=1}^n$ is a skew-symmetrizable integer matrix  of size $n$,
\item
$x=(x_i)_{i=1}^n$ is an $n$-tuple 
of algebraically independent elements of  $\mathcal{F}$,
\item
$y=(y_i)_{i=1}^n$ is an $n$-tuple  of elements of $\mathbb{P}$.
\item
$z=(z_{i,s}\mid i=1,\dots,n; s=0,\dots,d_i)$ is a collection  of elements of $\mathbb{P}$
such that $z_{i,0}=z_{i,d_i}=1$ for any $i=1,\dots,n$.
\end{itemize}
Here we call such $(B,x,y,z)$ a {\em seed},
and call $B$, $x$,  $y$, and $z$  the {\em exchange matrix},
 the {\em $x$-variables},  the {\em $y$-variables},
 and  {\em $z$-variables} of a seed $(B,x,y,z)$,
respectively.
Also, we set
\begin{align}
\label{eq:gyhat1}
\hat{y}_i
&
=
y_i \prod_{j=1}^n x_{j}^{b_{ji}}
\in \mathcal{F},
\quad
i=1,\dots,n
\end{align}
as before,
and call them  the {\em $\hat{y}$-variables} of a seed $(B,x,y,z)$.
\par
(2).
For any seed $(B,x,y,z)$ and any $k=1,\dots,n$, we define a new seed
$(B',x',y',z')=\mu_k(B,x,y,z)$, called the {\em mutation of $(B,x,y,z)$ at $k$}, as follows:
\begin{align}
\label{eq:gBmut1}
b'_{ij}&=
\begin{cases}
-b_{ij} & \text{$i=k$ or $j=k$}\\ 
b_{ij}+d_k([-b_{ik}]_+b_{kj} + b_{ik}[b_{kj}]_+)
& i,j\neq k,
\end{cases}
\\
\label{eq:gxmut1}
x'_i&=
\begin{cases}
\displaystyle
x_k^{-1}
\left(\prod_{j=1}^n
x_j^{[-b_{jk}]_+}
\right)^{d_k}
 \frac{P_{d_k,z_k} (\hat{y}_k)}{P_{d_k,z_k} ({y}_k)\vert_{\mathbb{P}}}
& i=k\\ 
x_i 
& i\neq k,
\end{cases}
\\
\label{eq:gymut1}
y'_i&=
\begin{cases}
y_k^{-1}& i=k\\ 
y_i
\left( y_k^{[b_{ki}]_+} 
\right)^{d_k}
(P_{d_k,z_k} ({y}_k)\vert_{\mathbb{P}})^{-b_{ki}}
& i\neq k.
\end{cases}
\\
\label{eq:gzmut1}
z'_{i,s}&=
z_{i,d_i-s}.
\end{align}
Here and elsewhere, for the $z$-variables $z=(z_{i,s})_{i=1,\dots,n; s=0,\dots,d_i}$,
we set
\begin{align}
z_k:=(z_{k,s})_{s=0}^{d_k},
\end{align}
and
\begin{align}
\label{eq:poly2}
P_{d_k,z_k}(\hat{y}_k)=\sum_{s=0}^{d_k} z_{k,s} \hat{y}_k^s,\quad
\quad
P_{d_k,z_k}(y_k)\vert_{\mathbb{P}}=\bigoplus_{s=0}^{d_k} z_{k,s} y_k^s.
\end{align}
\end{defn}

The following facts can be easily checked:

(1) The mutation $\mu_k$ is an involution,
i.e., $\mu_k(\mu_k(B,x,y,z))=(B,x,y,z)$.

(2)  If $R$ is a skew-symmetrizer of $B$,
then $R$ is also a skew-symmetrizer of $B'$.

(3)
The $\hat{y}$-variables in \eqref{eq:gyhat1}  transform
 in $\mathcal{F}$
as the $y$-variables,
namely,
\begin{align}
\label{eq:gyhatmut1}
\hat{y}'_i&=
\begin{cases}
\hat{y}_k^{-1}& i=k\\ 
\hat{y}_i
\left( \hat{y}_k^{[b_{ki}]_+} 
\right)^{d_k}
(P_{d_k,z_k} (\hat{y}_k))^{-b_{ki}}
& i\neq k.
\end{cases}
\end{align}


\subsection{Dilogarithm identities}
Let $n$ and $d=(d_i)_{i=1}^n$ be  the ones in Section
\ref{subsec:seed2}.
Let $\mathbb{Q}_+(y,z)$
be the universal semifield
 with generators (formal commutative variables)
$y=(y_i)_{i=1}^n$ and $z=(z_{i,s})_{i=1,\dots,n;s=0,\dots,d_i}$
as defined in Example \ref{ex:semi1} (2).
However,
in accordance with our situation, 
$z_{i,0}$ and $z_{i,d_i}$ $(i=1,\dots,n)$
are specialized to the identity element $1$.
For example, in the case $d=(1,\dots,1)$, we have $\mathbb{Q}_+(y,z)
=\mathbb{Q}_+(y)$.

From now on we specialize the coefficient semifield $\mathbb{P}$
in Definition \ref{defn:mut2}
as $\mathbb{P}=\mathbb{Q}_+(y,z)$.
Let us choose a seed $(B,x,y,z)$,
where $B$ and $x$ are arbitrary,
but we especially choose $y$ and $z$ to be
the generators $y$ and $z$ of $\mathbb{Q}_+(y,z)$.
Let us call $(B,x,y,z)$ the {\em initial seed},
and consider a sequence of mutations starting from it:
\begin{align}
\label{eq:gseq1}
\begin{split}
(B,x,y,z)=&(B[1],x[1],y[1],z[1])
\buildrel  \mu_{k_1} \over \rightarrow
(B[2],x[2],y[2],z[2])
\buildrel  \mu_{k_2} \over \rightarrow
\cdots\\
&
\cdots
\buildrel  \mu_{k_{m}} \over \rightarrow
(B[m+1],x[m+1],y[m+1],z[m+1]).
\end{split}
\end{align}

Let $R=\mathrm{diag}(r_1,\dots,r_n)$ be a common skew-symmetrizer
of $B[1]$, \dots, $B[m+1]$,
and let $r$ be the least common multiple of $r_1,\dots,r_n$.
We set $\tilde{r}_i=r /r_i\in \mathbb{Z}_+$.

\begin{defn}
\label{defn:period1}
We say that the sequence \eqref{eq:gseq1} is {\em $\sigma$-periodic}
for a permutation $\sigma$ of $\{1,\dots,n\}$ if
\begin{align}
\label{eq:gperiod1}
\begin{split}
b_{\sigma(i)\sigma(j)}[m+1]
&=b_{ij},
\quad
x_{\sigma(i)}[m+1]
=x_{i},
\quad
y_{\sigma(i)}[m+1]
=y_{i},\quad
(i,j=1,\dots,n).
\end{split}
\end{align}
\end{defn}

\begin{prop}
If the sequence \eqref{eq:gseq1} is {\em $\sigma$-periodic},
then we have
\begin{align}
\label{eq:rsym1}
r_{\sigma(i)}=r_i,
\quad
(i=1,\dots,n).
\end{align}
\end{prop}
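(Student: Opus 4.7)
The plan is to use the skew-symmetrizer identities for $B = B[1]$ and $B[m+1]$ together with the $\sigma$-periodicity relations to force the equalities $r_{\sigma(i)} = r_i$, after reducing to a single connected component of $B$. First, since $R$ is a skew-symmetrizer of both $B$ and $B[m+1]$, one has $r_i b_{ij} = -r_j b_{ji}$ and $r_i b_{ij}[m+1] = -r_j b_{ji}[m+1]$ for all $i, j$. Substituting the periodicity relation $b_{\sigma(i)\sigma(j)}[m+1] = b_{ij}$ into the second identity (after relabeling $i \to \sigma(i)$, $j \to \sigma(j)$) yields $r_{\sigma(i)} b_{ij} = -r_{\sigma(j)} b_{ji}$. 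Combining with the original skew-symmetrizer relation gives $r_{\sigma(i)}/r_i = r_{\sigma(j)}/r_j$ whenever $b_{ij} \neq 0$. Hence $i \mapsto r_{\sigma(i)}/r_i$ is constant on each connected component $C$ of the graph on $\{1,\dots,n\}$ with an edge between $i$ and $j$ iff $b_{ij} \neq 0$; denote the constant by $\lambda(C) > 0$.

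Next, I would show that $\sigma$ stabilizes each such $C$, i.e., $\sigma(C) = C$. The crucial input is that the mutations in Definition \ref{defn:mut2} respect the block-diagonal decomposition of $B$ along its components: a mutation at $k \in C$ involves only data $b_{\cdot k}$, $b_{k \cdot}$, $x_j$ (for $j$ with $b_{jk} \neq 0$), $y_k$, and $z_{k,\cdot}$, all of which are indexed in $C$, and it leaves the remaining data untouched. By induction on $t$, the element $x_i[t] \in \mathcal{F}$ lies in the subfield generated over $\mathbb{Q}(\mathbb{P})$ by $\{x_j : j \in C_i\}$, where $C_i$ is the component of $i$. The $\sigma$-periodicity equation $x_{\sigma(i)}[m+1] = x_i$ then equates an element of the subfield associated to $C_{\sigma(i)}$ with the independent generator $x_i$, and algebraic independence of the initial $x$-variables forces $C_{\sigma(i)} = C_i$, that is, $\sigma(C) = C$.

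Finally, since $\sigma$ restricts to a permutation of the finite set $C$, taking the product of $r_{\sigma(i)} = \lambda(C) r_i$ over $i \in C$ yields $\prod_{i \in C} r_i = \prod_{i \in C} r_{\sigma(i)} = \lambda(C)^{|C|} \prod_{i \in C} r_i$, and since $r_i > 0$ this forces $\lambda(C) = 1$. The conclusion $r_{\sigma(i)} = r_i$ follows for every $i$. I expect the component-stabilization step to be the main obstacle; the other two are routine algebraic bookkeeping, whereas the stabilization argument requires invoking the (intuitive but non-immediate) decoupling property of generalized cluster mutations across the block-diagonal components of $B$.
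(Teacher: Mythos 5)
Your proof is correct, and it shares the paper's overall skeleton---reduce to the indecomposable blocks of $B$, show $\sigma$ preserves each block, then use $B$-periodicity to compare symmetrizer entries---but both nontrivial steps are carried out by different means. For block stabilization the paper argues with the $y$-variables: by \eqref{eq:gymut1} a mutation at $k$ only alters data indexed by the block of $k$, so $y_{\sigma(i)}[m+1]=y_i$ forces $\sigma(i)$ and $i$ into the same block; you use the $x$-variables and the algebraic independence of the initial cluster instead, which is an equally valid variant and arguably easier to make fully rigorous, since the initial $x$-variables are by definition algebraically independent generators of the ambient field (your induction does hinge on mutations preserving the block-diagonal form at every intermediate step, so that $b_{jk_t}[t]\neq 0$ only for $j$ in the initial block of $k_t$; you state this, and it is immediate from \eqref{eq:gBmut1}). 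For the concluding step the paper invokes the fact that an indecomposable skew-symmetrizable matrix has a skew-symmetrizer unique up to a positive scalar, takes the minimal one, and notes that $B$-periodicity makes the $\sigma$-permuted minimal symmetrizer again minimal, hence equal; your route is more self-contained: combining $r_{\sigma(i)}b_{ij}=-r_{\sigma(j)}b_{ji}$ (from periodicity applied to $R B[m+1]$) with $r_ib_{ij}=-r_jb_{ji}$ shows $r_{\sigma(i)}/r_i$ is constant on each block---essentially a direct proof of the proportionality fact the paper quotes---and the product over a block (equivalently, over $\sigma$-orbits) pins the constant to $1$ without any appeal to minimality. So the two proofs buy slightly different things: the paper's is shorter by citing a standard uniqueness statement, while yours is elementary and makes the role of the $\sigma$-action explicit through the normalization $\lambda(C)^{|C|}=1$.
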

\begin{proof}
Without losing generality, one can assume that $B=B[1]$ is decomposed into
a block diagonal form such that each block
is indecomposable.
By \eqref{eq:Bmut1},
mutations preserve the block diagonal form.
Moreover,
by \eqref{eq:ymut1} and the assumption
$y_{\sigma(i)}[m+1]
=y_{i}$,
$\sigma$ only permutes the indices in the same block.
On the other hand, for each indecomposable bock $(b_{ij})_{i,j=p}^q$,
its skew-symmetrizer is unique up to a multiplicative constant.
In particular, there is a unique minimal skew-symmetrizer
$\mathrm{diag}(r'_p,\dots,r'_q)$  of the block.
Then, by the assumption
$b_{\sigma(i)\sigma(j)}[m+1]
=b_{ij}$,
 $\mathrm{diag}(r'_{\sigma(p)},\dots,r'_{\sigma(q)})$
 is also the minimal skew-symmetrizer of the block.
 Therefore,
we have $r'_{\sigma(i)}=r'_i$ ($i=p,\dots,q$).
Thus, $r_{\sigma(i)}=r_i$ ($i=p,\dots,q$) holds.
\end{proof}

The following fact connects generalized
cluster algebras and dilogarithm identities of higher degree.
\begin{thm}[cf. Theorem \ref{thm:yconst1}]
\label{thm:gyconst1}
Suppose that the sequence \eqref{eq:gseq1} is $\sigma$-periodic for some
permutation $\sigma$.
Then, it satisfies the following ``constancy condition" in
the wedge product $\bigwedge^2 \mathbb{Q}_+(y,z)$,
where we regard $\mathbb{Q}_+(y,z)$ as a multiplicative abelian group:
\begin{align}
\label{eq:gconst3}
\sum_{t=1}^{m} \tilde{r}_{k_t}
\left( y_{k_t}[t] \wedge P_{d_{k_t},z_{k_t}[t]}(y_{k_t}[t])
\right) = 0.
\end{align}
\end{thm}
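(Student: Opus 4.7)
My plan is to reduce the statement in $\bigwedge^2 \mathbb{Q}_+(y,z)$ to an identity living inside the ambient field $\mathcal{F}$, and then exploit the $\sigma$-periodicity as a telescoping device. The strategy mirrors the ordinary-cluster argument of \cite{Nakanishi10c}, with the binomial $1+y_k$ replaced everywhere by the exchange polynomial $P_{d_k,z_k}(y_k)$, and with extra bookkeeping forced by the mutation degree $d=(d_i)_{i=1}^n$.

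First I would lift the identity to the $\hat{y}$-variables. Since $\hat{y}_i[t]$ obeys the mutation rule \eqref{eq:gyhatmut1}, which is formally the same as \eqref{eq:gymut1} with $P_{d_k,z_k}(y_k)\vert_{\mathbb{P}}$ replaced by $P_{d_k,z_k}(\hat{y}_k)$, I would first aim to establish
\begin{equation*}
\sum_{t=1}^{m}\tilde{r}_{k_t}\bigl(\hat{y}_{k_t}[t]\wedge P_{d_{k_t},z_{k_t}[t]}(\hat{y}_{k_t}[t])\bigr)=0 \quad \text{in } \bigwedge{}^2\mathcal{F}^{\times}.
\end{equation*}
This is easier to attack because the exchange relation \eqref{eq:gxmut1} can be rewritten as
\begin{equation*}
x_{k_t}[t]\, x_{k_t}[t+1] = \Bigl(\prod_{j}x_j[t]^{[-b_{jk_t}[t]]_+}\Bigr)^{d_{k_t}}\cdot\frac{P_{d_{k_t},z_{k_t}[t]}(\hat{y}_{k_t}[t])}{P_{d_{k_t},z_{k_t}[t]}(y_{k_t}[t])\vert_{\mathbb{P}}},
\end{equation*}
so that $P_{d_{k_t},z_{k_t}[t]}(\hat{y}_{k_t}[t])$ is expressible as a ratio of $x$-monomials times a factor belonging to $\mathbb{P}$.

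Next I would introduce an auxiliary element $\omega[t]\in\bigwedge^2\mathcal{F}^{\times}$ patterned on the ordinary case, for instance
\begin{equation*}
\omega[t]=\sum_{i<j}\frac{b_{ji}[t]}{r_i r_j}\, x_i[t]\wedge x_j[t],
\end{equation*}
and compute the increment $\omega[t+1]-\omega[t]$ using the mutation formulas \eqref{eq:gBmut1}, \eqref{eq:gxmut1}. The claim to verify is that this increment equals $\tilde{r}_{k_t}\bigl(\hat{y}_{k_t}[t]\wedge P_{d_{k_t},z_{k_t}[t]}(\hat{y}_{k_t}[t])\bigr)$ modulo terms in $\bigwedge^2\mathbb{P}$ that cancel in the final sum. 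Summing over $t=1,\dots,m$ then produces a telescoping expression, and by the $\sigma$-periodicity of the seed data together with the equality $r_{\sigma(i)}=r_i$ proved just above, we obtain $\omega[m+1]=\omega[1]$ (after the index relabeling by $\sigma$), so the sum vanishes. Finally, to descend from $\bigwedge^2\mathcal{F}^{\times}$ back to $\bigwedge^2\mathbb{Q}_+(y,z)$, I would use the relation $\hat{y}_i[t]=y_i[t]\prod_j x_j[t]^{b_{ji}[t]}$ to split each wedge term into a $y$-part and an $x$-part, and show that, thanks to the same periodicity, the total $x$-contribution again telescopes to zero.

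The main obstacle will be the precise bookkeeping inside the increment $\omega[t+1]-\omega[t]$: the generalized mutation formula \eqref{eq:gBmut1} carries an extra factor $d_k$ in front of the correction, and the exchange relation produces $P_{d_{k_t},z_{k_t}[t]}(\hat{y}_{k_t}[t])$ (of degree $d_{k_t}$) rather than $1+\hat{y}_{k_t}[t]$. These two degree-$d_{k_t}$ ingredients must cancel each other cleanly, and the $z$-flip \eqref{eq:gzmut1} must provide exactly the right matching contribution for the mutated seed. Verifying that all of these generalized ingredients combine to yield the single wedge term $\tilde{r}_{k_t}\bigl(\hat{y}_{k_t}[t]\wedge P_{d_{k_t},z_{k_t}[t]}(\hat{y}_{k_t}[t])\bigr)$, with no leftover terms, is the technical heart of the argument.
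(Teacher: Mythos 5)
There is a genuine gap here, on two levels. First, the technical heart of your argument --- the computation of the increment $\omega[t+1]-\omega[t]$ and the claim that the degree-$d_{k_t}$ factors from \eqref{eq:gBmut1} and \eqref{eq:gxmut1} cancel cleanly --- is exactly the part you defer, so as it stands this is a plan rather than a proof. Second, and more structurally, your descent step does not work as sketched: the statement \eqref{eq:gconst3} lives in $\bigwedge^2\mathbb{Q}_+(y,z)$, and proving the $\hat{y}$-analogue in $\bigwedge^2\mathcal{F}^{\times}$ does not imply it, because the inclusion $\mathbb{Q}_+(y,z)\hookrightarrow\mathcal{F}^{\times}$ does not induce an injection on wedge squares, and there is no projection of $\bigwedge^2\mathcal{F}^{\times}$ onto a ``$y$-only'' part. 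Moreover $P_{d_{k_t},z_{k_t}[t]}(\hat{y}_{k_t}[t])$ is a sum, not a product, so it has no intrinsic splitting into a $y$-part and an $x$-part; expressing it via the exchange relation drags in $x$-variables of two adjacent seeds, which are complicated Laurent polynomials in the initial cluster, so ``showing the total $x$-contribution telescopes to zero'' does not isolate an identity in the subgroup. Your proposed $\omega[t]$ also has non-integer coefficients $b_{ji}[t]/(r_ir_j)$, which forces you to tensor with $\mathbb{Q}$ --- fixable, but a sign that the bookkeeping is not yet in place.

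The paper avoids all of this by never leaving $\mathbb{Q}_+(y,z)$: instead of $x$-variables it uses their $F$-polynomial avatars. Concretely, it sets $V[t]=\sum_{i}\tilde{r}_i\left(F_i[t]\wedge y_i[t]\right)+\frac{1}{2}\sum_{i,j}b_{ij}[t]\tilde{r}_j\left(F_i[t]\wedge F_j[t]\right)$, and shows via the recursion \eqref{eq:Frec1} and the separation formula \eqref{eq:sep1} that $V[t+1]-V[t]=\tilde{r}_{k_t}\left(y_{k_t}[t]\wedge P_{d_{k_t},z_{k_t}[t]}(y_{k_t}[t])\right)$; the $\sigma$-periodicity of the seeds, of the skew-symmetrizer (\eqref{eq:rsym1}) and of the $F$-polynomials (\eqref{eq:Fperiod1}) then gives $V[m+1]=V[1]=0$ and the telescoping sum vanishes. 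Crucially, in the increment computation a residual term $\bigl(\prod_i y_i^{c_{ik_t}[t]}\bigr)\wedge\bigl(\prod_i y_i^{[-c_{ik_t}[t]]_+}\bigr)$ survives and is killed only by the sign-coherence of $c$-vectors (Theorem \ref{thm:gsign1}); this ingredient is entirely absent from your plan, and some analogue of it would be needed on your route as well. If you want to pursue an $x$-variable/ambient-field argument in the spirit of Fock--Goncharov, you would still have to pass through principal-coefficient data ($C$-matrices, $F$-polynomials, sign coherence) to land back in $\bigwedge^2\mathbb{Q}_+(y,z)$, which is essentially the paper's proof.
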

Our proof of  Theorem \ref{thm:gyconst1} relies on
some detailed results on
the $y$-variables obtained in \cite{Nakanishi14a}, and it will  be given in Section \ref{sec:proof1}.

Admitting Theorem \ref{thm:gyconst1},
we give the first half of the dilogarithm
identity of higher degree associated with
any period of seeds.

\begin{thm}[cf. Theorem \ref{thm:id1}]
\label{thm:gid1}
Suppose that the sequence \eqref{eq:gseq1} is $\sigma$-periodic for some
permutation $\sigma$.
Let 
\begin{align}
\label{eq:gphi1}
\varphi: \mathbb{Q}_+(y,z) \rightarrow \mathbb{R}_+
\end{align}
be any semifield homomorphism
such that 
$\varphi(z_{i})=(\varphi(z_{i,s}))_{s=0}^{d_{i}}$
satisfies the generic condition \eqref{eq:generic1}
for any $i=1,\dots,n$.
Then, the following sum only depend on
the images of the  $z$-variables  $\varphi(z_{i,s})$,
and does not depend on the images 
of the  $y$-variables $\varphi(y_i)$:
\begin{align}
\label{eq:gid4}
\sum_{t=1}^{m}
\tilde{r}_{{k}_t} \tilde{L}_{d_{k_t},\varphi(z_{k_t}[t])}
\left(
\varphi
\left(
y_{k_t}[t]
\right)
\right).
\end{align}
\end{thm}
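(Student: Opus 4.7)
The strategy is to deduce Theorem~\ref{thm:gid1} by transferring the wedge-product constancy relation of Theorem~\ref{thm:gyconst1} (whose proof is deferred to Section~5) from $\bigwedge^2\mathbb{Q}_+(y,z)$ to $\bigwedge^2\mathcal{C}$ via a semifield homomorphism, and then invoking the abstract constancy statement of Theorem~\ref{thm:gconst2}. This is exactly parallel to the passage from Theorem~\ref{thm:yconst1} to Theorem~\ref{thm:id1} in the ordinary case. Since \eqref{eq:gid4} is a function of $\varphi$, it is enough to show: for any two semifield homomorphisms $\varphi_0, \varphi_1 : \mathbb{Q}_+(y,z) \to \mathbb{R}_+$ that agree on every generator $z_{i,s}$ and each satisfy the genericity hypothesis of the statement, the two corresponding values of \eqref{eq:gid4} are equal.

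To realize this equality I would interpolate between $\varphi_0$ and $\varphi_1$. Let $\mathcal{C} = \mathcal{C}([0,1], \mathbb{R}_+)$ be the semifield of positive-real-valued differentiable functions on $[0,1]$ under pointwise product and sum, and define a map on generators by
\[
\Phi(y_i)(u) \;=\; \varphi_0(y_i)^{1-u}\, \varphi_1(y_i)^{u}, \qquad \Phi(z_{i,s})(u) \;=\; \varphi_0(z_{i,s}),
\]
which automatically satisfies $\Phi(z_{i,0}) = \Phi(z_{i,d_i}) = 1$. The universal property of $\mathbb{Q}_+(y,z)$ extends this uniquely to a semifield homomorphism $\Phi : \mathbb{Q}_+(y,z) \to \mathcal{C}$. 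Because $\Phi$ respects both multiplication and addition, it commutes with polynomial evaluation in the sense $\Phi(P_{d,z}(y)) = P_{d, \Phi(z)}(\Phi(y))$, and it induces a $\mathbb{Z}$-linear map $\bigwedge^2\Phi : \bigwedge^2 \mathbb{Q}_+(y,z) \to \bigwedge^2 \mathcal{C}$. Applying $\bigwedge^2\Phi$ to the relation \eqref{eq:gconst3} of Theorem~\ref{thm:gyconst1} yields
\[
\sum_{t=1}^m \tilde{r}_{k_t}\, \bigl(\Phi(y_{k_t}[t]) \wedge P_{d_{k_t}, \Phi(z_{k_t}[t])}(\Phi(y_{k_t}[t]))\bigr) = 0
\]
in $\bigwedge^2 \mathcal{C}$. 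The mutation rule \eqref{eq:gzmut1} guarantees that each $\Phi(z_{k_t}[t])$ is either $\varphi_0(z_{k_t})$ or its reverse $\varphi_0(z_{k_t})^*$, and by \eqref{eq:dual3} the generic condition \eqref{eq:generic1} is preserved under reversal; hence the hypotheses of Theorem~\ref{thm:gconst2} are met, and that theorem asserts that
\[
\sum_{t=1}^m \tilde{r}_{k_t}\, \tilde{L}_{d_{k_t},\, \Phi(z_{k_t}[t])}\bigl(\Phi(y_{k_t}[t])(u)\bigr)
\]
is independent of $u \in [0,1]$. Specializing at $u=0$ and $u=1$ gives the desired equality of the sums for $\varphi_0$ and $\varphi_1$.

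The genuine difficulty is packaged inside Theorem~\ref{thm:gyconst1}, which rests on the detailed analysis of the $y$-variables of generalized cluster algebras from \cite{Nakanishi14a}; this is the main obstacle, and is where Section~5 is headed. Within the present proof the only items requiring verification are routine: that $\mathcal{C}$ really is a semifield so that $\Phi$ extends uniquely, that the $z$-mutation \eqref{eq:gzmut1} merely reverses the coefficient tuple at the mutation site so genericity persists along the whole sequence, and that the log-linear interpolation $\varphi_0(y_i)^{1-u}\varphi_1(y_i)^{u}$ stays in $\mathbb{R}_+$ and is smooth in $u$. None of these is a serious issue.
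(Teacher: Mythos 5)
Your proposal is correct and follows essentially the same route as the paper: reduce to two homomorphisms $\varphi_0,\varphi_1$ agreeing on the $z$-variables, interpolate to get a differentiable positive path (the paper uses the arithmetic interpolation $(1-u)\varphi_0(y_i)+u\varphi_1(y_i)$ rather than your geometric one, an immaterial difference), push the constancy relation \eqref{eq:gconst3} of Theorem \ref{thm:gyconst1} into $\bigwedge^2\mathcal{C}$, apply Theorem \ref{thm:gconst2}, and evaluate at $u=0,1$. Your explicit check that mutation only reverses the $z$-tuple and that genericity survives reversal via \eqref{eq:dual3} is a point the paper leaves implicit, but it is the same argument.
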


\begin{proof} The argument is standard (e.g., the proof of \cite[Theorem 6.4]{Nakanishi10c}).
Suppose that there are two semifield homomorphisms
$\varphi_0$ and $\varphi_1$
from $ \mathbb{Q}_+(y,z) $ to $\mathbb{R}_+$ such that
$\varphi_0(z_{i,s})=\varphi_1(z_{i,s})$.
Then, we can interpolate them by a family of 
semifield homomorphisms $\varphi_u$ ($u\in [0,1]$),
\begin{align}
\label{eq:gphi2}
\begin{matrix}
\varphi_u& : \mathbb{Q}_+(y,z)&  \rightarrow & \mathbb{R}_+\\
& y_i & \mapsto & (1-u) \varphi_0(y_i) + u \varphi_1 (y_i)\\
& z_{i,s} & \mapsto & \varphi_0(z_{i,s})=\varphi_1(z_{i,s}).
\end{matrix}
\end{align}
Let us introduce positive-real-valued and differentiable functions
$Y_t \in \mathcal{C}$ ($t=1,\dots,m$)  on the interval $[0,1]$ defined by
\begin{align}
Y_t(u)=\varphi_u(y_{k_t}[t]).
\end{align}
Applying the family of homomorphisms $\varphi_u$ ($u\in [0,1]$) to 
\eqref{eq:gconst3},
we have the constancy condition in $\bigwedge^2 \mathcal{C}$: 
\begin{align}
\label{eq:gconst4}
\sum_{t=1}^{m} \tilde{r}_{k_t}
\left( Y_t \wedge P_{d_k,\varphi_0(z_{k_t}[t])}(Y_t)
\right)
 = 0,
\end{align}
where we used the fact that $\varphi_u(z_{k_t}[t])=\varphi_0(z_{k_t}[t])$
for any $u\in [0,1]$.
Then,
by Theorem \ref{thm:gconst2},
the  dilogarithm sum
\begin{align}
\label{eq:sum4}
\sum_{t=1}^m 
 \tilde{r}_{k_t} 
 \tilde{L}_{d_t,\varphi_0(z_{k_t}[t])}
\left(Y_t(u)
\right)
\end{align}
is constant as a function of $u\in [0,1]$.
In particular, setting $u=0$ and $1$, we have
\begin{align}
\label{eq:sum5}
\sum_{t=1}^m 
 \tilde{r}_{k_t} 
\tilde{L}_{d_t,\varphi_0(z_{k_t}[t])}
\left(\varphi_0(y_{k_t}[t])
\right)
=
\sum_{t=1}^m 
 \tilde{r}_{k_t} 
\tilde{L}_{d_t,\varphi_0(z_{k_t}[t])}
\left(\varphi_1(y_{k_t}[t])
\right),
\end{align}
which is the desired result.
\end{proof}

The second half of the dilogarithm identity of higher degree
is about the constant value of \eqref{eq:gid4}.
To describe it,
we introduce the semifield homomorphism ({\em tropicalization map})
\begin{align}
\label{eq:gtrop1}
\begin{matrix}
\pi: &  \mathbb{Q}_+(y,z) & \rightarrow &\mathrm{Trop}(y,z)\\
& y_i & \mapsto & y_i\\
& z_{i,s} & \mapsto & z_{i,s}.
\end{matrix}
\end{align}
Here,  $\mathrm{Trop}(y,z)$
is the tropical semifield
 with generators 
$y=(y_i)_{i=1}^n$ and $z=(z_{i,s})_{i=1,\dots,n;s=0,\dots,d_i}$
as defined in Example \ref{ex:semi1} (3),
but again
$z_{i,0}$ and $z_{i,d_i}$ $(i=1,\dots,n)$
are specialized to the identity element $1$.

We then apply the map $\pi$ to each $y$-variable $y_i[t]$
in \eqref{eq:gseq1}.
It is known that the image $\pi(y_i[t])$ does not depend on $z$-variables $z$
\cite[Lemma 3.6]{Nakanishi14a};
thus,  it is expressed as
\begin{align}
\label{eq:gtropy1}
\pi(y_i[t])=\prod_{j=1}^n y_j^{c_{ji}[t]}.
\end{align}
Thus, we have a family of square matrices $C[t]=(c_{ij}[t])_{i,j=1}^n$
for $t=1,\dots,m+1$,
which are called the {\em $C$-matrices} for the sequence \eqref{eq:gseq1}.
Alternatively, they can be directly defined through the following system of recursion relations
\cite[Propostition 3.8]{Nakanishi14a}:
\par
(initial condition)
\begin{align}
&\text{}
\quad
c_{ij}[1]=\delta_{ij},
\end{align}
\par
(recursion relation)
\begin{align}
\label{eq:crec1}
c_{ij}[t+1]=
\begin{cases}
-c_{ik_t}[t]
& j=k_t\\
c_{ij}[t]
+
d_{k_t}
([-c_{i{k_t}}[t]]_+ b_{k_t j}[t]
+ c_{ik_t}[t]
[b_{k_tj}[t]]_+)
&
j\neq k_t.
\end{cases}
\end{align}
The $i$th column vector $c_i[t]=(c_{ji}[t])_{j=1}^n$ of the matrix $C[t]$
is called the {\em $c$-vector} of $y[t]_i$.
By the definition of  \eqref{eq:gtropy1},
it is
the ``exponent vector" of
the tropical $y$-variable $\pi(y_i[t])$.

It is known that the sign-coherence property of the $c$-vectors still holds
 for generalized cluster algebras.

\begin{thm}[{\cite[Theorem 3.20]{Nakanishi14a}}]
\label{thm:gsign1}
Each $c$-vector $c_i[t]$ is a nonzero vector, and all its components
are either nonnegative or nonpositive.
\end{thm}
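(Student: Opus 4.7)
The plan is to reduce Theorem \ref{thm:gsign1} to the ordinary sign-coherence theorem (Theorem \ref{thm:sign1}) via an \emph{unfolding} construction. One builds an ambient ordinary skew-symmetrizable cluster algebra of rank $N=\sum_{i=1}^{n} d_i$, containing $d_i$ copies of each original index $i$, together with an $N\times N$ exchange matrix $\tilde{B}$ supported on these copies in such a way that a single generalized mutation $\mu_k$ in the original algebra is realized by a prescribed composition of $d_k$ consecutive ordinary mutations at the $d_k$ copies of $k$. Under this dictionary, the generalized $c$-vector $c_i[t]\in\mathbb{Z}^n$ is obtained by aggregating the ordinary $c$-vectors attached to the copies of index $i$ at the matched step and projecting back to $\mathbb{Z}^n$.

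Granting this setup, sign-coherence is essentially immediate. By Theorem \ref{thm:sign1} every ordinary $c$-vector in the unfolded algebra is either entrywise nonnegative or entrywise nonpositive, and is nonzero. If the unfolding is arranged so that the $d_i$ copies of index $i$ all carry $c$-vectors of the \emph{same} sign at each matched step, then their aggregate is automatically sign-coherent (the sum of equally signed nonzero vectors is nonzero and preserves the sign), which yields both the nonvanishing and the sign-coherence asserted in Theorem \ref{thm:gsign1}. An alternative route, which I would fall back on only if the combinatorial bookkeeping below becomes unwieldy, is to establish the claim intrinsically by categorification or by adapting the scattering-diagram argument of Gross-Hacking-Keel-Kontsevich to degree-$d$ polynomial wall-functions; but the unfolding path uses only tools already in the literature.

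The hard part is the first step: producing $\tilde{B}$ and matching the $C$-matrix recursions. One must verify that iterating the ordinary recursion for $C$-matrices $d_k$ times at the copies of $k$ reproduces the generalized recursion \eqref{eq:crec1}, in particular generating the $d_k$ factor multiplying the $[\,\cdot\,]_+$ terms in \eqref{eq:gBmut1} and \eqref{eq:crec1}. Equally delicate is the ``common sign'' property, namely that the $d_i$ copies of index $i$ never acquire opposite signs at a matched step; this is where choosing $\tilde{B}$ symmetrically on each copy group is essential, because that symmetry propagates through mutations and forces the relative $c$-vectors of the copies of $i$ to agree up to a common sign. Verifying this propagation across an arbitrary mutation sequence is the combinatorial heart of the argument, and it is what I expect to occupy most of the actual proof.
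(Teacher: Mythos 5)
The theorem is not proved in this paper at all: it is quoted from \cite[Theorem 3.20]{Nakanishi14a}, and the proof there is a \emph{rank-preserving} reduction, not an unfolding. The point is that the $c$-vectors are completely determined by the recursion \eqref{eq:crec1} (they do not depend on the $z$-variables), and \eqref{eq:crec1} is \emph{verbatim} the ordinary $C$-matrix recursion of the rank-$n$ cluster pattern whose initial exchange matrix is the companion $DB$ with $D=\mathrm{diag}(d_1,\dots,d_n)$: since $(DB[t])_{k_t j}=d_{k_t}b_{k_t j}[t]$ and $[d_{k_t}a]_+=d_{k_t}[a]_+$, the factor $d_{k_t}$ in \eqref{eq:crec1} appears automatically, and a one-line check from \eqref{eq:gBmut1} shows $\mu_k(DB[t])=D\,B[t+1]$ in the ordinary sense; moreover $DB$ is skew-symmetrizable because $(RD^{-1})(DB)=RB$, so Theorem \ref{thm:sign1} applies directly to that ordinary pattern and gives the statement at once. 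Your proposal misses this short route, and what it offers in its place is a plan whose central step is explicitly deferred rather than proved.

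Concretely, the gap is that your dictionary is asserted, not constructed, and as stated it is partly wrong. At the level of seeds the claim is false: a generalized exchange governed by a generic polynomial $P_{d_k,z_k}$ cannot be realized by composing $d_k$ binomial exchanges, so any unfolding can only be matched at the tropical level; this is salvageable precisely because the $c$-vectors are $z$-independent, but you never invoke that reduction. Even tropically, where the natural choice $\tilde b_{(i,a),(j,b)}=b_{ij}$ for $i\neq j$ and $0$ within each copy group does work, the identification is not the ``aggregation of the copies' $c$-vectors'' you describe: one must fix a \emph{single} column copy $(j,b)$ and sum its entries over the row copies of each $i$ (summing over the column copies rescales by $d_j$), and closing the induction against \eqref{eq:crec1} needs (i) mutual non-adjacency of the copies of $k$, so the $d_k$ mutations commute and the block telescopes with unchanged $\tilde b_{(k,c),(j,b)}$ and $\tilde c_{\,\cdot\,,(k,c)}$, (ii) equivariance under the group permuting copies, valid only at matched steps, and (iii) ordinary sign-coherence of each single unfolded column to convert $\sum_a[-\tilde c_{(i,a),(k,c)}]_+$ into $d_k[-c_{ik}]_+$; your ``common sign among the copies'' is not the mechanism that makes this work. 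None of these verifications is carried out in your write-up, and even after completing them you would still be invoking Theorem \ref{thm:sign1} for a skew-symmetrizable (not skew-symmetric) unfolded matrix, so the detour gains nothing over the companion-matrix argument that the cited proof actually uses.
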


Accordingly, we set the {\em tropical sign\/}
$\varepsilon(y_i[t])\in \{ \pm 1 \}$ of $y_i[t]$ as $1$ (resp. $-1$) if 
all components of  $c_i[t]$ is nonnegative (resp. nonpositive).
For simplicity, let us write
\begin{align}
\label{eq:ts1}
\varepsilon_t=\varepsilon(y_{k_t}[t]).
\end{align}

Now,
continuing from
Theorem \ref{thm:gid1},
we can state the second half of the dilogarithm identity
of higher degree.

\begin{thm}[cf. Theorem \ref{thm:id2}]
\label{thm:gid2}
Under the assumption of Theorem \ref{thm:gid1},
we have the following equality for any choice of $\varphi$ in \eqref{eq:gphi1}:
\begin{align}
\label{eq:gid5}
\sum_{t=1}^{m}
\tilde{r}_{{k}_t} 
 \tilde{L}_{d_{k_t},\varphi(z_{k_t}[t])}
\left(
\varphi
\left(
y_{k_t}[t]
\right)
\right)
=
\sum_{t=1}^{m}
\tilde{r}_{{k}_t} 
\frac{1-\varepsilon_t }{2}
 \tilde{L}_{d_{k_t},\varphi(z_{k_t}[t])}(\infty).
\end{align}
\end{thm}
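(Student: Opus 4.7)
The plan is to exploit the invariance of the sum \eqref{eq:gid4} under deformations of $\varphi$ that fix the $z$-images, as established in Theorem \ref{thm:gid1}, by passing to a tropical limit in the $y$-variables; this mirrors the strategy used for Theorem \ref{thm:id2} in the ordinary case. Starting from any $\varphi$ satisfying the hypotheses of Theorem \ref{thm:gid1}, I would define a one-parameter family of semifield homomorphisms $\varphi_\epsilon:\mathbb{Q}_+(y,z)\to\mathbb{R}_+$ by $\varphi_\epsilon(y_i)=\epsilon$ for all $i$ and $\varphi_\epsilon(z_{i,s})=\varphi(z_{i,s})$, where $\epsilon>0$ is small. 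Theorem \ref{thm:gid1} guarantees that the sum \eqref{eq:gid4} takes the same value at $\varphi_\epsilon$ as at $\varphi$, so it suffices to compute the limit of that sum as $\epsilon\to 0^+$.

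The core technical step is to determine the asymptotic behavior of $\varphi_\epsilon(y_{k_t}[t])$ as $\epsilon\to 0^+$. I would appeal to the separation-type factorization of $y$-variables for generalized cluster algebras from \cite{Nakanishi14a}: inside $\mathbb{Q}_+(y,z)$, the variable $y_i[t]$ can be written as the tropical monomial $\pi(y_i[t])=\prod_j y_j^{c_{ji}[t]}$ times a subtraction-free rational expression $R_i[t](y,z)$ whose value at $y=0$ (with $z$ fixed to any positive real specialization) equals $1$. The nonvanishing of $R_i[t]$ at $y=0$ reflects the sign-coherence of the underlying $F$-polynomial data, which rests on Theorem \ref{thm:gsign1}. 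Consequently, if $\varepsilon_t=+1$, then $c_{k_t}[t]$ is nonzero and nonnegative, so $\varphi_\epsilon(y_{k_t}[t])\sim \epsilon^{s_t}$ with $s_t=\sum_j c_{jk_t}[t]\geq 1$, and hence $\varphi_\epsilon(y_{k_t}[t])\to 0$; if $\varepsilon_t=-1$, the $c$-vector is nonzero and nonpositive, hence $\varphi_\epsilon(y_{k_t}[t])\to\infty$.

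To finish, note that $\tilde{L}_{d,z}(0)=L_{d,z}(0)=0$ by \eqref{eq:Rogers8}, while $\tilde{L}_{d,z}(\infty)=\lim_{x\to\infty}\tilde{L}_{d,z}(x)$ is precisely the quantity appearing on the right-hand side of \eqref{eq:gid5}. Continuity of $\tilde{L}_{d,z}$ on $[0,\infty]$ then shows that each summand in \eqref{eq:gid4} with $\varepsilon_t=+1$ contributes $0$ in the limit and each with $\varepsilon_t=-1$ contributes $\tilde{r}_{k_t}\tilde{L}_{d_{k_t},\varphi(z_{k_t}[t])}(\infty)$; both cases are uniformly captured by the factor $\frac{1-\varepsilon_t}{2}$, so summing over $t$ yields exactly \eqref{eq:gid5}.

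The main obstacle is the rigorous justification of the separation factorization in the generalized setting, specifically the claim that the non-tropical factor $R_i[t](y,z)$ has a well-defined, nonzero limit at $y=0$ uniformly in the chosen specialization of $z$. For ordinary cluster algebras this is a standard consequence of Fomin-Zelevinsky's separation formula combined with sign-coherence; for generalized cluster algebras the analogous structural statement should be extractable from \cite{Nakanishi14a}, but invoking it precisely, with uniform control in $z$, is the main care required.
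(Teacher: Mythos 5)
Your proposal is correct and follows essentially the same route as the paper: deform $\varphi$ to a homomorphism sending the $y$-generators to a small parameter (the paper uses $\varphi_u(y_i)=u\varphi(y_i)$, you use $\varphi_\epsilon(y_i)=\epsilon$), invoke Theorem \ref{thm:gid1} to keep the sum fixed, and compute the limit via the separation formula \eqref{eq:sep1}, sign-coherence (Theorem \ref{thm:gsign1}), and the constant term $1$ of the $F$-polynomials. The ``main obstacle'' you flag is not actually a gap: the needed factorization with non-tropical factor tending to $1$ at $y=0$ is exactly \cite[Theorem 3.22]{Nakanishi14a} combined with \cite[Proposition 3.19]{Nakanishi14a}, which the paper cites for precisely this step (and no uniformity in $z$ is required since $\varphi(z_{i,s})$ is fixed throughout).
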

\begin{proof}
Again, the argument is standard (e.g., the proof of \cite[Theorem 6.4]{Nakanishi10c}).
Let $\varphi$ be any such semifield homomorphism.
Then, we consider a family of semifield homomorphism $\varphi_u$ ($u\in (0,1]$)
as follows:
\begin{align}
\label{eq:gphi3}
\begin{matrix}
\varphi_u& : \mathbb{Q}_+(y,z)&  \rightarrow & \mathbb{R}_+\\
& y_i & \mapsto &  u \varphi (y_i)\\
& z_{i,s} & \mapsto & \varphi(z_{i,s}).
\end{matrix}
\end{align}

First, we claim  the following behavior of $y$-variables in the limit $u\rightarrow 0$:
\begin{align}
\label{eq:lim1}
\lim_{u\to 0}
\varphi_u (y_{k_t}[t])
=
\begin{cases}
0 & \varepsilon_t=1\\
\infty & \varepsilon_t=-1.
\end{cases}
\end{align}
This follows from the forthcoming expression of the $y$-variables
\eqref{eq:sep1},
together with 
 the sign-coherence property in Theorem \ref{thm:gsign1}
 and
the fact that
all polynomials $F_j[t](y,z)$ in \eqref{eq:sep1} have the constant term 1
\cite[Proposition 3.19]{Nakanishi14a}.

On the other hand,
by Theorem \ref{thm:gid1},
one can replace
$\varphi(y_{k_t}[t])$ in the left side of \eqref{eq:gid5}
with $\varphi_u(y_{k_t}[t])$ for any $u\in (0,1]$ without changing the sum therein.
Then, by taking the  limit $u\rightarrow 0$,
we obtain the right-hand side of \eqref{eq:gid5}
thanks to \eqref{eq:lim1}.
\end{proof}

Using \eqref{eq:Rogers9},
we also have an alternative form of 
the identity \eqref{eq:gid5},
which is constant-term free.

\begin{thm}[cf. Theorem \ref{thm:id21}]
\label{thm:gid3}
The identity  \eqref{eq:gid5} is equivalent to the following one:
\begin{align}
\label{eq:gid6}
\sum_{t=1}^{m}
\varepsilon_t
\tilde{r}_{{k}_t} 
 \tilde{L}_{d_{k_t},\varphi((z_{k_t}[t])^{\circ})}
\left(
\varphi
\left(
(y_{k_t}[t])^{\varepsilon_t}
\right)
\right)
=
0,
\end{align}
where
\begin{align}
(z_{k_t}[t])^{\circ}
=
\begin{cases}
z_{k_t}[t] & \varepsilon_t=1\\
(z_{k_t}[t])^* & \varepsilon_t=-1.
\end{cases}
\end{align}
\end{thm}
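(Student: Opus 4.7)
The plan is to derive \eqref{eq:gid6} from \eqref{eq:gid5} by applying the duality relation \eqref{eq:Rogers11} term-by-term on the indices $t$ with $\varepsilon_t = -1$. The reverse implication will then follow by reading the same chain of equalities backwards. Because the sign-coherence Theorem \ref{thm:gsign1} assigns a well-defined tropical sign $\varepsilon_t \in \{\pm 1\}$ to each $y_{k_t}[t]$, every term in \eqref{eq:gid5} lies in exactly one of the two classes $\varepsilon_t = +1$ or $\varepsilon_t = -1$, and we handle them separately.

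First I would partition the left-hand side of \eqref{eq:gid5} as $S_+ + S_-$ where $S_\pm$ is the subsum over $t$ with $\varepsilon_t = \pm 1$. The right-hand side of \eqref{eq:gid5} simplifies at once: the factor $(1-\varepsilon_t)/2$ kills the $\varepsilon_t = +1$ terms and equals $1$ on the $\varepsilon_t = -1$ terms, so the RHS reduces to $T := \sum_{t:\varepsilon_t=-1} \tilde{r}_{k_t} \tilde{L}_{d_{k_t},\varphi(z_{k_t}[t])}(\infty)$. Now on each summand of $S_-$, apply \eqref{eq:Rogers11} with $d = d_{k_t}$, $z = \varphi(z_{k_t}[t])$, and $x = \varphi(y_{k_t}[t]) \in \mathbb{R}_+$:
\begin{align*}
\tilde{L}_{d_{k_t},\varphi(z_{k_t}[t])}\!\left(\varphi(y_{k_t}[t])\right)
= \tilde{L}_{d_{k_t},\varphi(z_{k_t}[t])}(\infty)
- \tilde{L}_{d_{k_t},\varphi(z_{k_t}[t])^*}\!\left(\varphi(y_{k_t}[t])^{-1}\right).
\end{align*}
Substituting these expressions termwise in $S_-$ and subtracting $T$ from both sides of \eqref{eq:gid5}, the $\tilde{L}(\infty)$ pieces cancel exactly, yielding
\begin{align*}
S_+ - \sum_{t:\varepsilon_t = -1} \tilde{r}_{k_t}\, \tilde{L}_{d_{k_t},\varphi(z_{k_t}[t])^*}\!\left(\varphi(y_{k_t}[t])^{-1}\right) = 0.
\end{align*}
Writing the minus sign on the $\varepsilon_t = -1$ terms as $\varepsilon_t$, and recalling the definition of $(z_{k_t}[t])^\circ$, this is precisely \eqref{eq:gid6}.

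For the converse direction I would run the same computation in reverse: starting from \eqref{eq:gid6}, use \eqref{eq:Rogers11} in the $\varepsilon_t = -1$ terms to rewrite $\tilde{L}_{d_{k_t},\varphi(z_{k_t}[t])^*}(\varphi(y_{k_t}[t])^{-1})$ in terms of $\tilde{L}_{d_{k_t},\varphi(z_{k_t}[t])}(\infty)$ and $\tilde{L}_{d_{k_t},\varphi(z_{k_t}[t])}(\varphi(y_{k_t}[t]))$, then collect. Because \eqref{eq:Rogers11} is a two-sided identity, each step is reversible, so the implication works in both directions and establishes the claimed equivalence.

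There is no substantive obstacle; the only point that needs a brief remark is that the duality \eqref{eq:Rogers11} may be legitimately applied. This requires $\varphi(y_{k_t}[t]) \in \mathbb{R}_{\geq 0}$, which holds since $\varphi$ takes values in $\mathbb{R}_+$, and it requires that $\varphi(z_{k_t}[t])^*$ satisfy the generic condition \eqref{eq:generic1}, which is automatic by the observation after \eqref{eq:dual4} that $z^*$ inherits the generic condition from $z$. Once these points are noted, the proof is a clean bookkeeping exercise driven entirely by the single functional equation \eqref{eq:Rogers11}.
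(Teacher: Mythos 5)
Your proof is correct and is exactly the argument the paper intends: the theorem is stated as a direct consequence of the duality \eqref{eq:Rogers9}/\eqref{eq:Rogers11}, applied termwise to the $\varepsilon_t=-1$ summands so that the $\tilde{L}(\infty)$ constants cancel against the right-hand side of \eqref{eq:gid5}. Your added remarks (that $\varphi$ commutes with the reversal $z\mapsto z^*$ and with inversion of $y$, and that $z^*$ inherits the generic condition) are the only points needing verification, and you handle them correctly.
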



\begin{rem}
One can force to set some of $\varphi(z_{i,s})$ to be 0,
and Theorems \ref{thm:gid2} and \ref{thm:gid3} still hold  by continuity.

\end{rem}
\section{Proof of Theorem \ref{thm:gyconst1}}
\label{sec:proof1}

In this section we give a proof of 
Theorem \ref{thm:gyconst1},
which is the core of this paper.
The proof here uses the same argument
in the ``second proof" of \cite[Proposition 6.3]{Nakanishi10c} therein,
whose idea originates in \cite[Proposition 6.3]{Fock03}
and \cite[Proposition 2.14]{Fock07}.
Interestingly,
even though the statement of 
Theorem \ref{thm:gyconst1} only involves
$y$- and $z$-variables,
the proof  requires {\em $F$-polynomials} as well,
which are the specializations of the
accompanying $x$-variables.

Let us recall the notion of the {\em $F$-polynomials} for generalized cluster algebras
 \cite{Nakanishi14a} in our context.
 
 \begin{defn}
Let us consider the sequence \eqref{eq:gseq1}.
Apply
  the tropicalization map of  \eqref{eq:gtrop1} to
 all $y$-variables involving in the mutation
 of $x$-variables  \eqref{eq:gxmut1}.
Then, it is known \cite[Proposition 3.3]{Nakanishi14a}
 that the resulting $x$-variable $x_i[t]$ is expressed
as a Laurent polynomial $X_i[t](x,y,z)\in \mathbb{Z}[x^{\pm1},y,z]$.
By specializing $x_1=\dots x_n=1$ in $X_i[t](x,y,z)$,
we obtain a polynomial $F_i[t](y,z)\in \mathbb{Z}[y,z]$,
which is called the {\em $F$-polynomial} of $x_i[t]$.
 \end{defn}

We use the following known properties of the $F$-polynomials,
which generalize the ones for ordinary cluster algebras by \cite{Fomin07}.

\begin{prop}
(1). (\cite[Proposition 3.12]{Nakanishi14a})
The $F$-polynomials satisfy the following system of recursion relations:
\par
(initial condition)
\begin{align}
\label{eq:Fp1}
F_{i}[1](y,z)=1,
\end{align}
\par
(recursion relation)
\begin{align}
\label{eq:Frec1}
F_{i}[t+1](y,z)=
\begin{cases}
\displaystyle
F_{k_t}[t](y,z)^{-1}
\left(
\prod_{j=1}^n
y_j^{[-c_{jk_t}[t]]_+}
F_j[t](y,z)^{[-b_{jk_t}[t]]_+}
\right)^{d_{k_t}}\\
\displaystyle
\times
P_{d_{k_t},z_{k_t}[t]}
\left(
\prod_{j=1}^n
y_j^{c_{jk_t}[t]}
F_j[t](y,z)^{b_{jk_t}[t]}
\right)
& i=k_t\\
F_i[t](y,z)
&
i\neq k_t.
\end{cases}
\end{align}
\par
(2). (\cite[Theorem 3.22]{Nakanishi14a})
 The following equality holds: 
 \par
 (Separation formula)
\begin{align}
\label{eq:sep1}
y_i[t]
=
\prod_{j=1}^n y_j^{c_{ji}[t]}
F_j[t](y,z)^{b_{ji}[t]}.
\end{align}
\end{prop}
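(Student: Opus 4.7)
I plan to construct a telescoping potential $\Phi[t]\in\bigwedge^2\mathbb{Q}_+(y,z)$ for each $t\in\{1,\dots,m+1\}$, built from the $F$-polynomials $F_j[t]$, the $C$-matrix entries $c_{ij}[t]$, and the exchange matrix $B[t]$, such that
\[
\Phi[t+1]-\Phi[t]=-\tilde{r}_{k_t}\bigl(y_{k_t}[t]\wedge P_{d_{k_t},z_{k_t}[t]}(y_{k_t}[t])\bigr)
\]
for every $t$, and such that $\Phi[1]=\Phi[m+1]=0$. Summation will then yield \eqref{eq:gconst3}.

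\textbf{The potential.} I would take
\[
\Phi[t]:=\sum_{i,j}\tilde{r}_j\,c_{ij}[t]\,(y_i\wedge F_j[t])\;+\;\tfrac{1}{2}\sum_{i,j}\tilde{r}_j\,b_{ij}[t]\,(F_i[t]\wedge F_j[t]).
\]
The coefficient $\tilde{r}_j b_{ij}[t]$ is antisymmetric in $(i,j)$ since $r_ib_{ij}[t]=-r_jb_{ji}[t]$ together with $\tilde{r}_ir_i=\tilde{r}_jr_j=r$ gives $\tilde{r}_jb_{ij}[t]=-\tilde{r}_ib_{ji}[t]$; thus the second sum is compatible with the antisymmetry of $\wedge$. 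At $t=1$, $c_{ij}[1]=\delta_{ij}$ and $F_j[1]=1$ make $\Phi[1]=0$ immediate. At $t=m+1$, $\sigma$-periodicity in the ambient field forces $x_i[m+1]=x_{\sigma^{-1}(i)}$, hence $X_i[m+1](x,y,z)=x_{\sigma^{-1}(i)}$ as a Laurent polynomial; specializing $x\mapsto 1$ gives $F_i[m+1]=1$, so $\Phi[m+1]=0$.

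\textbf{Verifying the telescoping.} I would split $\Phi[t+1]-\Phi[t]$ according to whether indices equal $k_t$, then apply the $C$-matrix recursion \eqref{eq:crec1}, the $B$-matrix mutation \eqref{eq:gBmut1}, and the $F$-polynomial recursion \eqref{eq:Frec1} — the latter rewritten as
\[
F_{k_t}[t]\,F_{k_t}[t+1]=P_{d_{k_t},z_{k_t}[t]}(y_{k_t}[t])\cdot\prod_j y_j^{d_{k_t}[-c_{j,k_t}[t]]_+}F_j[t]^{d_{k_t}[-b_{j,k_t}[t]]_+}.
\]
Three groups of wedge terms must cancel: (i) the $(y_i\wedge y_l)$ contributions, whose coefficient by sign-coherence (Theorem \ref{thm:gsign1}) is either identically zero (case $\varepsilon_t=1$) or of symmetric form $c_{i,k_t}[t]\,c_{l,k_t}[t]$ in $(i,l)$ (case $\varepsilon_t=-1$), hence vanish; (ii) the $(F_i[t]\wedge F_l[t])$ contributions from the two pieces of $\Phi[t]$, which after invoking $\tilde{r}_jb_{k_t,j}[t]=-\tilde{r}_{k_t}b_{j,k_t}[t]$ combine into a coefficient symmetric in $(i,l)$, hence vanish; (iii) the $(y_i\wedge F_l[t])$ cross-terms from the two pieces, which cancel pointwise. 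What survives is
\[
-\tilde{r}_{k_t}\Bigl[\sum_i c_{i,k_t}[t]\,(y_i\wedge P)+\sum_j b_{j,k_t}[t]\,(F_j[t]\wedge P)\Bigr],\quad P:=P_{d_{k_t},z_{k_t}[t]}(y_{k_t}[t]),
\]
which the separation formula \eqref{eq:sep1} assembles into $-\tilde{r}_{k_t}(y_{k_t}[t]\wedge P)$, as claimed.

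\textbf{Main obstacle.} The hard part will be the bookkeeping in the telescoping step: each of the three recursions spawns several wedge contributions, and isolating the precise cancellation pattern requires repeated use of the elementary identity $a[b]_++[-a]_+b=[a]_+[b]_+-[-a]_+[-b]_+$ to reconcile the mutation formulas for $B$ and $C$, the skew-symmetrizability identity $\tilde{r}_jb_{k_t,j}[t]=-\tilde{r}_{k_t}b_{j,k_t}[t]$, and the sign-coherence of $c$-vectors. Choosing the correct weights — in particular $\tilde{r}_j$ rather than $r_j$ or $r$ — is what aligns the symmetries so that every unwanted term is either identically zero or of symmetric-coefficient-times-antisymmetric-wedge form.
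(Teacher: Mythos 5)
Your proposal does not address the statement you were asked to prove. The statement is the Proposition recording two facts about $F$-polynomials: the recursion relations \eqref{eq:Fp1}--\eqref{eq:Frec1} and the separation formula \eqref{eq:sep1}. What you have written is instead a proof of Theorem \ref{thm:gyconst1} (the constancy condition \eqref{eq:gconst3}) --- and, crucially, your argument \emph{uses} both \eqref{eq:Frec1} and \eqref{eq:sep1} as inputs: you invoke ``the $F$-polynomial recursion \eqref{eq:Frec1}'' rewritten in product form in the telescoping step, and you invoke ``the separation formula \eqref{eq:sep1}'' to assemble the surviving terms into $y_{k_t}[t]\wedge P$. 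So the very identities to be established are assumed. To prove the actual Proposition one must (i) derive \eqref{eq:Frec1} from the $x$-variable mutation \eqref{eq:gxmut1}: apply the tropicalization map to the coefficients $y_k$ and $P_{d_k,z_k}(y_k)\vert_{\mathbb{P}}$ appearing there, identify the tropical $y$-variable with $\prod_j y_j^{c_{jk_t}[t]}$ via \eqref{eq:gtropy1} and sign coherence, and then specialize $x_1=\dots=x_n=1$ in the resulting Laurent polynomial $X_i[t]$; and (ii) prove \eqref{eq:sep1} by induction on $t$, checking that the right-hand side satisfies the same mutation rule \eqref{eq:gymut1} as $y_i[t]$ when one applies the recursions for $c_{ij}[t]$, $b_{ij}[t]$, and $F_j[t]$ simultaneously. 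The paper itself does not reprove these facts; it imports them from the cited reference.

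As a side remark, the argument you did write is essentially the paper's Section 5 proof of Theorem \ref{thm:gyconst1}: your potential $\Phi[t]$ equals $-V[t]$ of \eqref{eq:Vt} after expanding $y_i[t]$ there by the separation formula, and the cancellation pattern you describe (sign coherence killing the $y\wedge y$ terms, skew-symmetrizability aligning the $F\wedge F$ terms) matches Lemma \ref{lem:Vdif1}. That is a correct and well-organized account --- but of a different theorem.
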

We note that due to the above recursion,
$F_i[t](y,z)$ are also viewed as  elements in $\mathbb{Q}_+(y,z)$.

We also need the following periodicity property of the $F$-polynomials.

\begin{prop}
Suppose that the sequence \eqref{eq:gseq1} is $\sigma$-periodic for some
permutation $\sigma$.
Then, the $F$-polynomials obey the same periodicity, i.e.,
\begin{align}
\label{eq:Fperiod1}
F_{\sigma(i)}[m+1](y,z)
=F_{i}[1](y,z) =1.
\end{align}
Therefore, $F_{i}[m+1](y,z)=1$ for any $i=1,\dots,n$.
\end{prop}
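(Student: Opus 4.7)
The plan is to reduce the claimed periodicity to an identity among the $F$-polynomials that forces them to be trivial, using the separation formula for $\hat{y}$-variables and algebraic independence in the ambient field $\mathcal{F}$.

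First, I would establish an $\hat{y}$-analog of the separation formula \eqref{eq:sep1},
\begin{equation*}
\hat{y}_i[t] = \prod_{j=1}^n \hat{y}_j^{c_{ji}[t]} F_j[t](\hat{y}, z)^{b_{ji}[t]}.
\end{equation*}
This is obtained by applying the $y$-separation formula in an auxiliary generalized cluster algebra whose coefficient semifield is the universal semifield generated by the symbols $(\hat{y}_j, z_{i,s})$ and whose initial $y$-variables are these $\hat{y}$ symbols. The $c$-vector recursion \eqref{eq:crec1} and the $F$-polynomial recursion \eqref{eq:Frec1} depend only on $B$, $c$, and $z$, hence are unchanged; and the auxiliary $y$-variables coincide with the original $\hat{y}$-variables because the $\hat{y}$-mutation \eqref{eq:gyhatmut1} is precisely the $y$-mutation in a universal semifield.

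Next, from the $\sigma$-periodicity \eqref{eq:gperiod1}, I would deduce $\hat{y}_{\sigma(i)}[m+1] = \hat{y}_i$ by substituting the periodicities of $B$, $x$, $y$ into $\hat{y}_i = y_i \prod_j x_j^{b_{ji}}$ after renaming the dummy index $j \mapsto \sigma(j)$. Separately, applying the tropicalization map $\pi$ to $y_{\sigma(i)}[m+1] = y_i$ and using $\pi(F_j[m+1]) = 1$ in $\mathrm{Trop}(y,z)$ (which holds since $F_j[m+1]$ has constant term $1$ and nonnegative coefficients) gives $c_{j\sigma(i)}[m+1] = \delta_{ji}$.

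Substituting these into the $\hat{y}$-separation formula at $t = m+1$ and index $\sigma(i)$ collapses it to the key identity
\begin{equation*}
\prod_{j=1}^n F_j[m+1](\hat{y}, z)^{b_{j\sigma(i)}[m+1]} = 1, \qquad i = 1, \ldots, n.
\end{equation*}
Since the left-hand side has no $x$-dependence, the identity actually lives in the subfield $\mathbb{Q}(\hat{y}, z) \subset \mathcal{F}$, in which $(\hat{y}, z)$ are algebraically independent. By unique factorization in $\mathbb{Z}[\hat{y}, z]$ together with the constant-term-$1$ and nonnegativity of the $F$-polynomials, this forces $F_j[m+1] = 1$ for every $j$; in particular $F_{\sigma(i)}[m+1] = F_i[1] = 1$. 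The main obstacle lies in this last passage from the product identity to $F_j[m+1] = 1$: it is immediate when the exchange matrix $B$ has full column rank (an irreducible factor appearing in any $F_j[m+1]$ would have multiplicity vector in the trivial kernel of $B[m+1]^T$), but in the general case one must invoke the standard principal-extension trick---enlarging $B$ to a full-rank skew-symmetrizable block matrix without altering the relevant $F$-polynomials---to reduce to the full-rank situation.
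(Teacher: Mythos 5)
Your route is genuinely different from the paper's, and most of it is sound: the $\hat{y}$-analogue of \eqref{eq:sep1} is legitimate because the $\hat{y}$-variables mutate by \eqref{eq:gyhatmut1}, i.e.\ exactly as $y$-variables over a universal semifield; $\hat{y}_{\sigma(i)}[m+1]=\hat{y}_i$ and $c_{j\sigma(i)}[m+1]=\delta_{ji}$ do follow from \eqref{eq:gperiod1} (the latter directly from \eqref{eq:gtropy1}, no positivity of $F$-polynomials needed); and your kernel argument correctly finishes the proof when $B$ (hence every $B[t]$) has full rank. The genuine gap is the full-rank reduction. The principal-extension trick does leave the $F$-polynomials, the $C$-matrix and the $y$-variables in the first $n$ directions unchanged, but it changes the $x$-variables and the $\hat{y}$-variables: the mutation \eqref{eq:gxmut1} and the definition of $\hat{y}_k$ now involve the new rows of the enlarged matrix and the new cluster variables. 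Hence the hypothesis --- $\sigma$-periodicity \eqref{eq:gperiod1} of the \emph{original} seeds --- does not transfer to the enlarged pattern, and you have no way to produce the enlarged product identity with exponents $\tilde{b}_{j\sigma(i)}[m+1]$, which is precisely what your full-rank argument needs. Assuming periodicity of the enlarged (essentially principal-coefficient) pattern would be circular, since that periodicity already encodes the $F$-polynomial periodicity you are trying to prove. The degenerate case is not marginal: for the involution period of Section 6.1 with $n=1$, $B=(0)$, one has $F_1[2]=P_{d_1,z_1}(y_1)\neq 1$ and must show $F_1[3]=1$, yet your product identity reads $F_1[3]^{0}=1$ and carries no information; the same collapse happens whenever $B$ has a nontrivial kernel (there $\hat{y}$ and $y$ are no longer ``independent enough'' for the ambient-field argument to see the $F$-polynomials).

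For comparison, the paper's proof is a one-liner that avoids rank issues entirely: by definition the $F$-polynomials are obtained from the $x$-variables of \eqref{eq:gseq1} by tropicalizing the coefficients via \eqref{eq:gtrop1} and specializing $x_1=\dots=x_n=1$, and both operations are compatible with the mutation recursion, so $x_{\sigma(i)}[m+1]=x_i$ immediately yields $F_{\sigma(i)}[m+1]=F_i[1]=1$. If you want to salvage your algebraic approach, the fix is the same idea: first push the periodicity forward to the seeds with coefficients in $\mathrm{Trop}(y,z)$ (legitimate because the principal-coefficient $x$-variables are Laurent polynomials in $x$ with coefficients in $\mathbb{Z}[y,z]$, obtained by specializing the subtraction-free universal expressions), and only then read off the $F$-polynomials; working inside the ambient field with $\hat{y}$-variables and \eqref{eq:sep1} loses exactly the information you need whenever $B$ is degenerate.
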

\begin{proof}
This is because the $F$-polynomials are defined by
the tropicalization and specialization from the $x$-variables
in  \eqref{eq:gseq1}.
\end{proof}

To each seed $\Sigma[t]=(B[t],x[t],y[t],z[t])$ ($t=1,\dots,m+1)$
in the sequence \eqref{eq:gseq1},
we attach the following element in $\bigwedge^2 \mathbb{Q}_+(y,z)$:
\begin{align}
\label{eq:Vt}
V[t]
:=
\sum_{i=1}^n
\tilde{r}_i 
\left( F_i[t] \wedge y_i[t]\right)
+
\frac{1}{2}
\sum_{i,j=1}^n
 b_{ij}[t] \tilde{r}_j 
 \left(
 F_i[t] \wedge F_j[t] 
 \right).
\end{align}
Note that $V[1]=0$ due to the initial condition
\eqref{eq:Fp1}.

The next result is  crucial 
in our proof of Theorem \ref{thm:gyconst1}.
\begin{lem}
\label{lem:Vdif1}
 The following equality holds:
\begin{align}
V[t+1] - V[t]
=
\tilde{r}_{k_t}
\left( y_{k_t}[t] \wedge
P_{d_{k_t},z_{k_t}[t]}(y_{k_t}[t])
\right).
\end{align}
\end{lem}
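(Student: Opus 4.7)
The plan is to compute $V[t+1] - V[t]$ directly, substituting the mutation rules for $B$, $y$, and the $F$-polynomials, and to show that every wedge that appears cancels except for the claimed one. Write $k = k_t$ and abbreviate $F_i = F_i[t]$, $y_i = y_i[t]$, $b_{ij} = b_{ij}[t]$, $c_{ij} = c_{ij}[t]$, and $P = P_{d_k,z_k[t]}(y_k)$. The key preliminary input is the identity
\[
F_k F'_k = (M^-)^{d_k}\, P, \qquad M^- := \prod_{j=1}^n y_j^{[-c_{jk}]_+}\, F_j^{[-b_{jk}]_+},
\]
which follows by combining the $F$-polynomial recursion \eqref{eq:Frec1} with the separation formula \eqref{eq:sep1}; the latter identifies the argument of $P_{d_k,z_k}$ inside \eqref{eq:Frec1} as $y_k$ itself.

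Substituting the mutation rules, the diagonal $i = k$ term in $\sum_i \tilde{r}_i F_i \wedge y_i$ collapses via $y'_k = y_k^{-1}$ to $-\tilde{r}_k\,(F_k F'_k) \wedge y_k$, while each off-diagonal ($i \neq k$) term picks up factors $y_k^{d_k[b_{ki}]_+}$ and $P^{-b_{ki}}$ from $y'_i$. In the $F \wedge F$ part, since $F'_i = F_i$ for $i \neq k$ and $b'_{kj} = -b_{kj}$, the $(i=k, j\neq k)$ and $(i\neq k, j=k)$ blocks combine via $F'_k \wedge F_j + F_k \wedge F_j = (F_k F'_k) \wedge F_j$ to give wedges with $F_k F'_k$. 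Substituting the preliminary identity splits each such wedge into a $(M^-)^{d_k}$ piece and a $P$ piece.

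The resulting terms are grouped by wedge type. The four contributions of the form $\,\cdot \wedge P$ combine, using the skew-symmetrizer identity $\tilde{r}_i b_{ki} = -\tilde{r}_k b_{ik}$, into precisely $\tilde{r}_k\, y_k \wedge P$, which is the desired right-hand side. The pure $F \wedge F$ terms from the $i, j \neq k$ block of the $B$-mutation (proportional to $d_k([-b_{ik}]_+ b_{kj} + b_{ik}[b_{kj}]_+)$) cancel against the $F \wedge F$ pieces arising from expanding $M^-$ in the $(i=k, j\neq k)$ and $(i\neq k, j=k)$ blocks, using $[b_{kj}]_+ \tilde{r}_j = [-b_{jk}]_+ \tilde{r}_k$. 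The $F_i \wedge y_k$ terms coming from the off-diagonal $y$-mutations cancel against the $F_l \wedge y_k$ piece of the expansion of $-\tilde{r}_k d_k\, M^- \wedge y_k$. Finally, rewriting $y_k$ via the separation formula inside the remaining $M^- \wedge y_k$ contribution produces $y_l \wedge y_{l'}$ and $y_l \wedge F_{l'}$ terms; the latter cancel against the $y_l$-parts from expanding $M^- \wedge F_j$ and $F_i \wedge M^-$, while the former vanish by sign-coherence (Theorem \ref{thm:gsign1}), which forces the coefficient $[-c_{lk}]_+ c_{l'k}$ to be either identically zero (if all $c_{lk} \geq 0$) or symmetric in $l, l'$ (if all $c_{lk} \leq 0$), and hence to annihilate the antisymmetric wedge.

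The main obstacle is the combinatorial bookkeeping: each wedge type receives contributions from several mutation blocks, and the cancellations only become visible after systematically expanding $M^-$, applying the separation formula to $y_k$, and repeatedly invoking the skew-symmetrizer relation. The one step that is not purely formal is the vanishing of the $y_l \wedge y_{l'}$ contribution: this is where sign-coherence of $c$-vectors enters in an essential way, tying the identity to the cluster-algebraic structure rather than to a universal wedge identity.
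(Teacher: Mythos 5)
Your proposal is correct and follows essentially the same route as the paper's proof: a direct expansion of $V[t+1]-V[t]$ using the mutation rules \eqref{eq:gBmut1}, \eqref{eq:gymut1}, the $F$-polynomial recursion \eqref{eq:Frec1} combined with the separation formula \eqref{eq:sep1}, the skew-symmetrizer relation $b_{ij}[t]\tilde{r}_j=-b_{ji}[t]\tilde{r}_i$, and finally sign-coherence (Theorem \ref{thm:gsign1}) to annihilate the residual tropical term $\sum_{l,l'}[-c_{lk}]_+c_{l'k}\,(y_l\wedge y_{l'})$. The paper organizes the bookkeeping by splitting $V[t]$ into the two sums $V_1[t]$ and $V_2[t]$ rather than by wedge type, but the ingredients and cancellation structure are the same.
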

\begin{proof}
We prove it by the direct and straightforward calculation.
To make the calculation a little more transparent,
we separate the quantity $V[t]$ in \eqref{eq:Vt}
into two parts,
\begin{align}
\label{eq:Vt1}
V_1[t]=
\sum_{i=1}^n
\tilde{r}_i 
\left(F_i[t] \wedge y_i[t]
\right)
,
\quad
V_2[t]
=
\frac{1}{2}
\sum_{i,j=1}^n
 b_{ij}[t] \tilde{r}_j 
 \left(
 F_i[t] \wedge F_j[t]
 \right)
 ,
\end{align}
and calculate the difference $V_i[t+1]-V_i[t]$  separately.
After a careful calculation,
we obtain the following results.
\begin{align}
\begin{split}
\label{eq:Vdif1}
V_1[t+1]-V_1[t]
&=\tilde{r}_{k_t}
\left(
\left(
\prod_{i=1}^n F_i[t]^{b_{i k_t}[t]}
\right)
\wedge
P_{d_{k_t}, z_{k_t}[t]}(y_{k_t}[t])
\right)
\\
&\quad +
d_{k_t}\tilde{r}_{k_t}
\left(
y_{k_t}[t]
\wedge
\left(
\prod_{i=1}^n y_i^{[-c_{i k_t}[t]]_+}
\right)
\right)
\\
& \quad 
 +
\tilde{r}_{k_t}
\left(
y_{k_t}[t]
\wedge
P_{d_{k_t}, z_{k_t}[t]}(y_{k_t}[t])
\right)
,
\end{split}\\
\begin{split}
\label{eq:Vdif2}
V_2[t+1]-V_2[t]
&=
 -
\tilde{r}_{k_t}
\left(
\left(
\prod_{i=1}^n F_i[t]^{b_{i k_t}[t]}
\right)
\wedge
P_{d_{k_t}, z_{k_t}[t]}(y_{k_t}[t])
\right)
\\
&
\quad -d_{k_t}\tilde{r}_{k_t}
\left(
\left(
\prod_{i=1}^n (F_i[t])^{b_{i k_t}[t]}
\right)
\wedge
\left(
\prod_{i=1}^n y_i^{[-c_{i k_t}[t]]_+}
\right)
\right).
\end{split}
\end{align}
To obtain them, we used
\eqref{eq:gBmut1}, \eqref{eq:gymut1},
\eqref{eq:Frec1},
and also the skew-symmetric property
\begin{align}
b_{ij}[t]\tilde{r}_j
=-b_{ji}[t]\tilde{r}_i.
\end{align}
Summing up \eqref{eq:Vdif1} and \eqref{eq:Vdif2}
and using \eqref{eq:sep1},
we have
\begin{align}
\begin{split}
\label{eq:Vdif3}
V[t+1]-V[t]
&=
d_{k_t}\tilde{r}_{k_t}
\left(
\left(
\prod_{i=1}^n y_i^{c_{i k_t}[t]}
\right)
\wedge
\left(
\prod_{i=1}^n y_i^{[-c_{i k_t}[t]]_+}
\right)
\right)
\\
& \quad
+
\tilde{r}_{k_t}
\left(
y_{k_t}[t]
\wedge
P_{d_{k_t}, z_{k_t}[t]}(y_{k_t}[t])
\right).
\end{split}
\end{align}
Then,
the proof of the lemma completes by showing the first term
in the right-hand side of \eqref{eq:Vdif3} vanishes.
Indeed, using the equality $a=[a]_+ - [-a]_+$,
we have
\begin{align}
\label{eq:CC1}
\left(
\prod_{i=1}^n y_i^{c_{i k_t}[t]}
\right)
\wedge
\left(
\prod_{i=1}^n y_i^{[-c_{i k_t}[t]]_+}
\right)
&=
\left(
\prod_{i=1}^n y_i^{[c_{i k_t}[t]]_+}
\right)
\wedge
\left(
\prod_{i=1}^n y_i^{[-c_{i k_t}[t]]_+}
\right).
\end{align}
Then, either the first or the second component in the right-hand side
of \eqref{eq:CC1}
is $1$ due to the sign-coherence property of the $c$-vectors
in Theorem \ref{thm:gsign1}.
Thus, the right-hand side of \eqref{eq:CC1} vanishes as desired.
\end{proof}

Let us complete the proof of Theorem \ref{thm:gyconst1}.
\begin{proof}[Proof of Theorem \ref{thm:gyconst1}]
Due to the assumption of the periodicity 
\eqref{eq:gperiod1} and the resulting periodicities
of the skew-symmetrizer \eqref{eq:rsym1}
 and the $F$-polynomials
\eqref{eq:Fperiod1},
we have the periodicity of $V[t]$, i.e.,
\begin{align}
\label{eq:Vdif4}
V[m+1]=V[1]=0.
\end{align}
On the other hand, by Lemma \ref{lem:Vdif1},
we have
\begin{align}
\label{eq:Vdif5}
V[m+1]=V[m+1] - V[1]=
\sum_{t=1}^m
\tilde{r}_{k_t} 
\left(y_{k_t}[t] \wedge
P_{d_{k_t},z_{k_t}[t]}(y_{k_t}[t])
\right).
\end{align}
Combining \eqref{eq:Vdif4} and \eqref{eq:Vdif5},
we obtain the constancy condition \eqref{eq:gconst3}.
\end{proof}
\section{Examples}

Here we provide three examples of periodicities
in generalized cluster algebras which are not regarded as
the periodicities in ordinary cluster algebras.
In all of them the permutation $\sigma$ 
in Definition \ref{defn:period1}
is the trivial one.
So far, we do not know any example of 
a periodicity with a nontrivial permutation $\sigma$
which is not regarded as
a periodicity in an ordinary cluster algebra.
(For ordinary cluster algebras there are plenty  examples 
of periodicities with 
nontrivial permutations. See e.g. \cite[Theorem 7.1]{Inoue10a}.)

\subsection{Involution periodicity}
Let $(B,x,y,z)$ be any seed with any rank $n$ and any mutation degree $d$.
Due to the involution property of the mutation $\mu_k$,
the mutation sequence
\begin{align}
\label{eq:gseq2}
\begin{split}
(B,x,y,z)=(B[1],x[1],y[1],z[1])&
\buildrel  \mu_{k} \over \rightarrow
(B[2],x[2],y[2],z[2])
\buildrel  \mu_{k} \over \rightarrow
(B[3],x[3],y[3],z[3]).
\end{split}
\end{align}
is {\em periodic}, i.e., $\sigma$-periodic with $\sigma=\mathrm{id}$.
The  data for the associated dilogarithm identity is given as follows:
\begin{gather}
y_k[1]=y_k, \quad y_k[2] = y_k^{-1},\\
\varepsilon_1=1,\quad \varepsilon_2=-1,\\
z_k[1]=z_k, \quad z_k[2] = z_k^{*}.
\end{gather}
Thus, the dilogarithm identity in the form \eqref{eq:gid5} is
\begin{align}
\label{eq:gid7}
\tilde{r}_{{k}} 
 \tilde{L}_{d_{k},z_{k}}
\left(
y_{k}
\right)
+
\tilde{r}_{{k}} 
 \tilde{L}_{d_{k},z_{k}^*}
\left(
y_{k}^{-1}
\right)
=
\tilde{r}_{{k}} 
 \tilde{L}_{d_{k},z_{k}^*}(\infty),
\end{align}
where for notational simplicity
we omit the evaluation homomorphism $\varphi:\mathbb{Q}_+(y,z)
\rightarrow \mathbb{R}_+$  by
identifying $y_k$ and $z_k$  with their images in $\mathbb{R}_+$ by $\varphi$.
The identity \eqref{eq:gid7} coincides with
\eqref{eq:Rogers8}.
Meanwhile, the dilogarithm identity in the form \eqref{eq:gid6} 
becomes trivial.
\begin{align}
\label{eq:gid8}
\tilde{r}_{{k}} 
 \tilde{L}_{d_{k},z_{k}}
\left(
y_{k}
\right)
-
\tilde{r}_{{k}} 
 \tilde{L}_{d_{k},z_{k}}
\left(
y_{k}
\right)
=
0.
\end{align}

\subsection{Six term relation for type $B_2/C_2$}
Let $n=2$ and  $d=(2,1)$.
We consider an initial seed $(B,x,y,z)$ with
\begin{align}
B=
\left(
\begin{matrix}
0 & -1\\
1 & 0
\end{matrix}
\right),
\quad z_1=(1,\alpha,1),
\quad z_2=(1,1).
\end{align}
We choose $r_1=r_2=1$, so that $\tilde{r}_1=\tilde{r}_2=1$.
Then,
the following  sequence of alternative mutations is known to be periodic
\cite[Section 2.3]{Nakanishi14a}:
\begin{align}
\label{eq:gseq3}
\begin{split}
(B,x,y,z)=(B[1],x[1],y[1],z[1])&
\buildrel  \mu_{1} \over \rightarrow
(B[2],x[2],y[2],z[2])
\buildrel  \mu_{2} \over \rightarrow
\\
&
\cdots
\buildrel  \mu_{2} \over \rightarrow
(B[7],x[7],y[7],z[7]).
\end{split}
\end{align}
The  data for the associated dilogarithm identity is given as follows:
\begin{align}
\begin{split}
&
y_1[1]=y_1, \quad y_2[2] = y_2(1+\alpha y_1 + y_1^2),
\quad
y_1[3]=y_1^{-1}(1+y_2+\alpha y_1y_2 + y_1^2 y_2),\\
&y_2[4] = y_1^{-2}y_2^{-1}
(1+2y_2 + y_2^2 + \alpha y_1y_2 + \alpha y_1 y_2^2 + y_1^2 y_2^2),\\
&y_1[5]=y_1^{-1}y_2^{-1}(1+y_2), \quad y_2[6] = y_2^{-1},
\end{split}
\\
&
\varepsilon_1=\varepsilon_2=1,
\quad \varepsilon_3=\cdots =\varepsilon_6=-1,\\
&z_1[1]=z_1[3]=z_1[5]=z_1, \quad z_2[2] =z_2[4]=z_2[6]= z_2.
\end{align}
Then, the dilogarithm identity in the form \eqref{eq:gid6} 
is explicitly written as follows:
\begin{align}
\label{eq:gid9}
\begin{split}
&
 \tilde{L}_{2,z_{1}}
\left(y_1
\right)
+
 \tilde{L}
\left( y_2(1+\alpha y_1 + y_1^2)
\right)
-
 \tilde{L}_{2,z_{1}}
\left(y_1(1+y_2+\alpha y_1y_2 + y_1^2 y_2)^{-1}
\right)\\
&
-
 \tilde{L}
\left( y_1^{2}y_2
(1+2y_2 + y_2^2 + \alpha y_1y_2 + \alpha y_1 y_2^2 + y_1^2 y_2^2)^{-1}
\right)
\\
&
-
 \tilde{L}_{2,z_{1}}
\left(y_1y_2(1+y_2)^{-1}
\right)
-
 \tilde{L}
\left(y_2
\right)
=
0,
\end{split}
\end{align}
where $ \tilde{L}(x):=\tilde{L}(x)_{1,z_2}=L(x/(1+x))$ for the ordinary Rogers dilogarithm $L(x)$.

\subsection{Eight term relation for type $G_2$}
Let $n=2$ and  $d=(3,1)$.
We consider an initial seed $(B,x,y,z)$ with
\begin{align}
B=
\left(
\begin{matrix}
0 & -1\\
1 & 0
\end{matrix}
\right),
\quad z_1=(1,\alpha,\beta,1), 
\quad z_2=(1,1).
\end{align}
We choose $r_1=r_2=1$, so that $\tilde{r}_1=\tilde{r}_2=1$.
Then,
the following  sequence of alternative mutations is known to be periodic
\cite[Example 3.7]{Nakanishi15}:
\begin{align}
\label{eq:gseq4}
\begin{split}
(B,x,y,z)=(B[1],x[1],y[1],z[1])&
\buildrel  \mu_{1} \over \rightarrow
(B[2],x[2],y[2],z[2])
\buildrel  \mu_{2} \over \rightarrow
\\
&
\cdots
\buildrel  \mu_{2} \over \rightarrow
(B[9],x[9],y[9],z[9]).
\end{split}
\end{align}
The  data for the associated dilogarithm identity is given as follows:
\begin{align}
\begin{split}
&
y_1[1]=y_1, \quad y_2[2] = y_2(1+\alpha y_1 +\beta y_1^2 + y_1^3),\\
&
y_1[3]=y_1^{-1}(1+y_2+\alpha y_1y_2 +\beta y_1^2 y_2+y_1^3y_2),\\
&y_2[4] = y_1^{-3}y_2^{-1}
(1+3y_2 + 3y_2^2 + y_2^3 + 2 \alpha y_1y_2 + 4 \alpha y_1 y_2^2 + 2\alpha y_1 y_2^3\\
&\qquad\qquad +
\beta y_1^2 y_2 + \alpha^2 y_1^2 y_2^2 + 
3\beta y_1^2 y_2^2  + \alpha^2 y_1^2 y_2^3 + 2\beta y_1^2 y_2^3\\
&\qquad\qquad +
\alpha\beta y_1^3 y_2^2 + 2\alpha\beta y_1^3 y_2^3 
+ 3y_1^3 y_2^2 + 2y_1^3 y_2^3\\
&\qquad\qquad +
 \alpha y_1^4 y_2^2 + 2\alpha y_1^4 y_2^3
+ \beta^2 y_1^4 y_2^3 + 2\beta y_1^5 y_2^3 +
y_1^6 y_2^3
),
\\
&
y_1[5]=y_1^{-2}y_2^{-1}(1+2y_2+
y_2^2 + \alpha y_1 y_2 + \alpha y_1 y_2^2 + \beta y_1^2 y_2^2
+ y_1^3 y_2^2),
\\
&
y_2[6]=y_1^{-3}y_2^{-2}(
1 + 3y_2 + 3 y_2^2 + y_2^3
+\alpha y_1y_2 + 2\alpha y_1 y_2^2 +\alpha y_1 y_2^3\\
&\qquad\qquad +
\beta y_1^2 y_2^2 + \beta y_1^2 y_2^3 + y_1^3 y_2^3
),
\\
&
y_1[7]=y_1^{-1}y_2^{-1}(
1+y_2
),
\quad
y_2[8]=y_2^{-1},
\end{split}
\\
&
\varepsilon_1=\varepsilon_2=1,
\quad \varepsilon_3=\cdots =\varepsilon_8=-1,\\
&z_1[1]=z_1[5]=z_1,\quad z_1[3]=z_1[7]=z_1^*, \quad z_2[2] =z_2[4]=z_2[6]= 
z_2[8]=z_2.
\end{align}
Then, the dilogarithm identity in the form \eqref{eq:gid6} 
is explicitly written as follows:
\begin{align}
\label{eq:gid10}
\begin{split}
&
 \tilde{L}_{3,z_{1}}
\left(y_1
\right)
+
 \tilde{L}
\left(
 y_2(1+\alpha y_1 +\beta y_1^2 + y_1^3)
\right)\\
&
-
 \tilde{L}_{3,z_{1}}
\left(
y_1(1+y_2+\alpha y_1y_2 +\beta y_1^2 y_2+y_1^3y_2)^{-1}
\right)\\
&
-
 \tilde{L}
\bigl(
y_1^{3}y_2{}
(1+3y_2 + 3y_2^2 + y_2^3 + 2 \alpha y_1y_2 + 4 \alpha y_1 y_2^2 + 2\alpha y_1 y_2^3\\
&\qquad\qquad +
\beta y_1^2 y_2 + \alpha^2 y_1^2 y_2^2 + 
3\beta y_1^2 y_2^2  + \alpha^2 y_1^2 y_2^3 + 2\beta y_1^2 y_2^3\\
&\qquad\qquad +
\alpha\beta y_1^3 y_2^2 + 2\alpha\beta y_1^3 y_2^3 
+ 3y_1^3 y_2^2 + 2y_1^3 y_2^3\\
&\qquad\qquad +
 \alpha y_1^4 y_2^2 + 2\alpha y_1^4 y_2^3
+ \beta^2 y_1^4 y_2^3 + 2\beta y_1^5 y_2^3 +
y_1^6 y_2^3
)^{-1}
\bigr)
\\
&
-
 \tilde{L}_{3,z_{1}^*}
\bigl(
y_1^{2}y_2(1+2y_2+
y_2^2 + \alpha y_1 y_2 + \alpha y_1 y_2^2 + \beta y_1^2 y_2^2
+ y_1^3 y_2^2)^{-1}
\bigr)
\\
&
-
 \tilde{L}
\bigl(
y_1^{3}y_2^{2}(
1 + 3y_2 + 3 y_2^2 + y_2^3
+\alpha y_1y_2 + 2\alpha y_1 y_2^2 +\alpha y_1 y_2^3\\
&\qquad\qquad +
\beta y_1^2 y_2^2 + \beta y_1^2 y_2^3 + y_1^3 y_2^3)^{-1}
\bigr)
\\
&
-
 \tilde{L}_{3,z_{1}}
\left(
y_1{}y_2{}(
1+y_2
)^{-1}
\right)
-
 \tilde{L}
\left(
y_2
\right)
=
0.
\end{split}
\end{align}

\appendix

\section{Derivation of classical dilogarithm identity from  quantum one}

This appendix serves as  independent reading.

Here we complete the picture by showing how
the classical dilogarithm identity of higher degree 
in Theorem \ref{thm:gid3}
is obtained from   its quantum counterpart
in \cite[Theorem 4.1]{Nakanishi14b}.
This is a generalization of the argument  in \cite{Kashaev11}
for the ordinary cluster algebras with {\em skew-symmetric} exchange matrices.
(Thus, this presentation is new even 
for the ordinary cluster algebras with {\em skew-symmetrizable} exchange matrices.)

We rely on the {\em saddle point method},
which is standard  in quantum mechanics
(e.g., \cite[p.~95]{Takhtajan08}).
However, as in \cite{Kashaev11},
we stress that the derivation here is {\em only heuristic, and not functional-analytically rigorous}; for example, the uniqueness of the solution of the saddle point equations,
the specification of the integration contour through the saddle point, and the total validity of the method are not pursued.
Nevertheless, we believe that the derivation presented here is useful for  the readers.
(At least it is better than nothing.)

Here we follow and generalize the calculations  especially in  Section 4 and Appendix A of 
 \cite{Kashaev11}.
 Since this is a rather complicated subject,
we try to write it  in a self-contained way at a reasonable level, but not completely,
and we ask the readers to refer  to \cite{Kashaev11} (and also \cite{Nakanishi14b}) for further details.
 
 \subsection{Quantum dilogarithms of higher degree}
 For any positive integer $d$,
the {\em quantum dilogarithm of degree $d$ with coefficients
$z=(z_s)_{s=0}^d$}, where $z_0=z_d=1$, is defined as \cite{Nakanishi14b}
\begin{align}
\label{eq:gqd2}
\mathbf{\Psi}_{d,z,q}(x)
=
\prod_{k=0}^{\infty}
P_{d,z}(q^{2k+1} x)^{-1},
\end{align}
where $P_{d,z}(x)$ is the polynomial in
\eqref{eq:poly1},
whose coefficients $z_s$'s are  nonnegative real numbers 
which satisfy the generic condition
\eqref{eq:generic1},
and  $q\in \mathbb{C}$ with $|q|<1$.
Then, the power series \eqref{eq:gqd2} converges for
any  $x\in \mathbb{C}$.
Below let us concentrate on the region $x\geq 0$.
The function $\mathbf{\Psi}_{d,z,q}(x)$ is related to
the dilogarithm of higher degree
$\mathrm{Li}_{2;d,z}(x)$ in \eqref{eq:hLi2int1} 
in the asymptotic limit as follows \cite{Nakanishi14b}:
\begin{align}
\label{eq:gasym2}
\mathbf{\Psi}_{d,z,q}(x)
\sim
\exp\left(
-
\frac{\mathrm{Li}_{2;d,z}(-x)}{\log q^2}
\right),
\quad
q\rightarrow 1^{-}.
\end{align}

 \subsection{Quantum $Y$-seed and mutations}
 Let us recall the notions of a {\em quantum $Y$-seed} and its {\em mutation}
 in generalized cluster algebras following \cite{Nakanishi14b} with slight modification.
 \begin{defn}
 As in the classical  case,  first we fix the rank $n$ and and the mutation degree $d=(d_i)_{i=1}^n$.
 Then, we consider a triplet $(B,Y,z)$ such that
 \begin{itemize}
\item
$B=(b_{ij})_{i,j=1}^n$ is a skew-symmetrizable integer matrix  of size $n$,
\item
$Y=(Y_i)_{i=1}^n$ is an $n$-tuple  of {\em noncommutative} formal variables obeying the relation
\begin{align}
\label{eq:Y1}
Y_iY_j = q_j^{2b_{ji}} Y_j Y_i,
\quad
q_i :=q^{r_i},
\end{align}
where  $R=\mathrm{diag}(r_1,\dots,r_n)$ is a skew-symmetrizer of $B$,
\item
$z=(z_{i,s}\mid i=1,\dots,n; s=0,\dots,d_i)$ is a collection  of {\em commutative}
formal variables with $z_{i,0}=z_{i,d_i}=1$ for any $i=1,\dots,n$;
furthermore, $z_{i,s}$'s commute with  $Y_j$'s.
\end{itemize}
We call such $(B,Y,z)$ a {\em quantum $Y$-seed},
and call $Y=(Y_i)_{i=1}^n$ the {\em quantum $y$-variables} of $(B,Y,z)$.
\end{defn}

It is convenient to extend
the above quantum $y$-variables 
 to a family of noncommutative variables
$Y^{\alpha}$ ($\alpha\in \mathbb{Z}^n$) with the relations
\begin{align}
\label{eq:Y2}
q^{\langle\alpha,\beta\rangle}Y^{\alpha}Y^{\beta}
=Y^{\alpha+\beta},
\quad
\langle\alpha,\beta\rangle=:
\sum_{i,j=1}^n
\alpha_i r_i b_{ij}\beta_j,
\end{align}
where we identify $Y_i=Y^{e_i}$ for the $i$th unit vector $e_i$
of $\mathbb{Z}^n$.
 
 \begin{defn}
 For any quantum $Y$-seed $(B,Y,z)$ and any $k=1,\dots,n$, we define a new seed
$(B',Y',z')=\mu_k(B,Y,z)$, called the {\em mutation of $(B,Y,z)$ at $k$}, as follows:
\begin{align}
\label{eq:gYmut1}
Y'_i&=
\begin{cases}
Y_k^{-1}& i=k\\ 
Y^{e_i + d_k [b_{ki}]_+ e_k}
\displaystyle
\prod_{m=1}^{|b_{ki}|}
\left(
\sum_{s=0}^{d_k}
z_{k,s}
q_k^{-\mathrm{sgn}(b_{ki})
(2m-1)s}
Y_k^s
\right)
^{-\mathrm{sgn}(b_{ki})}
& i\neq k,
\end{cases}
\end{align}
where $\mathrm{sgn}(a)=1, -1, 0$ if $a>0$, $a<0$, $a=0$, respectively,
while $B'$ and $z'$ are defined 
by \eqref{eq:gBmut1} and \eqref{eq:gzmut1}, respectively.
\end{defn}
Indeed, it is easy to check that the following relation holds for the same 
skew-symmetrizer $R$:
\begin{align}
Y'_iY'_j = q_j^{2b'_{ji}} Y'_j Y'_i.
\end{align}
Again, the mutation $\mu_k$ is an involution,
i.e., $\mu_k(\mu_k(B,Y,z))=(B,Y,z)$.

\begin{rem} 
In \cite{Nakanishi14a} and \cite{Nakanishi14b}
a skew-symmetrizer $R$ was introduced, not for the exchange matrix $B$ itself,
but for the matrix $DB$.
Using this opportunity, let us
modify the convention to the one in this paper,
which is simpler.
For example, Equation (3.2) in \cite{Nakanishi14b} is replaced with
\eqref{eq:Y1};
and Equation (3.22) in \cite{Nakanishi14a} is replaced with
\begin{align}
R^{-1} (G^t)^T R C^t = I.
\end{align}

\end{rem}

 \subsection{Quantum dilogarithm identity of higher degree}
 
Let us choose any quantum $Y$-seed $(B,Y,z)$
as the {\em initial seed},
and consider a sequence of mutations starting from it:
\begin{align}
\label{eq:Gseq1}
\begin{split}
(B,Y,z)=(B[1],Y[1],z[1])&
\buildrel  \mu_{k_1} \over \rightarrow
(B[2],Y[2],z[2])
\buildrel  \mu_{k_2} \over \rightarrow
\\
& \cdots
\buildrel  \mu_{k_{m}} \over \rightarrow
(B[m+1],Y[m+1],z[m+1]),
\end{split}
\end{align}
where we use
 a common skew-symmetrizer
  $R=\mathrm{diag}(r_1,\dots,r_n)$ 
of $B[1]$, \dots, $B[m+1]$ to define the commutation relation
 for $Y[t]$  all $t=1,\dots,m+1$.

\begin{defn}
We say that the sequence \eqref{eq:Gseq1} is {\em $\sigma$-periodic}
for a permutation $\sigma$ of $\{1,\dots,n\}$ if
\begin{align}
\label{eq:Gperiod1}
\begin{split}
b_{\sigma(i)\sigma(j)}[m+1]
&=b_{ij},
\quad
Y_{\sigma(i)}[m+1]
=Y_{i},\quad
(i,j=1,\dots,n).
\end{split}
\end{align}
\end{defn}

Along with the sequence  \eqref{eq:Gseq1},
let us also consider the sequence 
\eqref{eq:gseq1} of the mutations of the classical seed, where the initial exchange matrix $B$ is taken to be
common in the both sequences.

\begin{conj}
The sequence \eqref{eq:Gseq1} is $\sigma$-periodic
if and only if the sequence \eqref{eq:gseq1}
is $\sigma$-periodic.
\end{conj}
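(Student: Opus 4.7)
The plan is to handle the two implications separately. The forward direction (quantum $\sigma$-periodic implies classical $\sigma$-periodic) is essentially immediate from a classical limit argument, while the converse requires a quantum separation formula.

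For quantum periodicity implies classical periodicity, the idea is to take the limit $q \to 1$. In this limit $q_k \to 1$, so each of the $|b_{ki}|$ factors $\sum_{s=0}^{d_k} z_{k,s} q_k^{-\mathrm{sgn}(b_{ki})(2m-1)s} Y_k^s$ appearing in \eqref{eq:gYmut1} collapses to a single $P_{d_k,z_k}(Y_k)$, and the noncommutative variables $Y_i$ commute, so the quantum mutation rule degenerates to the classical one \eqref{eq:gymut1}. The $B$- and $z$-mutations \eqref{eq:gBmut1} and \eqref{eq:gzmut1} are literally identical on the two sides. Hence any quantum identity $Y_{\sigma(i)}[m+1] = Y_i$ specializes to the classical identity $y_{\sigma(i)}[m+1] = y_i$, and the common exchange-matrix part of \eqref{eq:gperiod1} and \eqref{eq:Gperiod1} is automatic.

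For the converse, the plan is to reduce quantum periodicity to the periodicity of two purely classical data: the $C$-matrices $C[t]$ and the $F$-polynomials $F_i[t](y,z)$. Classical $\sigma$-periodicity delivers both: $c$-vector periodicity via the tropicalization map \eqref{eq:gtrop1} applied to \eqref{eq:gperiod1}, and $F$-polynomial periodicity by \eqref{eq:Fperiod1}. The key technical step is then to establish a quantum separation formula of the shape
\begin{align}
Y_i[t] = q^{\alpha_i[t]}\, Y^{c_i[t]} \prod_{j=1}^n \hat{F}_j[t](Y; z, q)^{b_{ji}[t]},
\end{align}
where $\hat{F}_j[t]$ is a quantum $F$-polynomial with classical limit $F_j[t](y,z)$ and $\alpha_i[t] \in \mathbb{Z}$ is a scalar $q$-correction determined inductively from \eqref{eq:gYmut1}. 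Granting such a formula, classical periodicity forces $Y_{\sigma(i)}[m+1] = q^{\alpha_i} Y_i$ for some integers $\alpha_i$, and then imposing the commutation relations \eqref{eq:Y1}, whose $B$-matrix data also return to the initial one, forces $\alpha_i = 0$.

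The hard part will be the quantum separation formula itself in the generalized setting. For ordinary quantum cluster algebras (all $d_k = 1$) an analogous formula is known and the induction is clean because each mutation introduces only a single quantum binomial $1 + q_k^{-\mathrm{sgn}(b_{ki})} Y_k$. When $d_k > 1$, the quantum mutation \eqref{eq:gYmut1} involves a product of $|b_{ki}|$ noncommuting polynomial factors of degree $d_k$ in $Y_k$, each carrying its own $q_k$-shift. The principal challenge is to track the $q$-powers across these factors, verify that the accumulated $q$-corrections collect into the scalar $q^{\alpha_i[t]}$, and confirm that the resulting $\hat{F}_j[t]$ are well-defined and specialize correctly at $q = 1$. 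Once this is carried out, the remainder of the argument follows the template familiar from the ordinary quantum cluster case.
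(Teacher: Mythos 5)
You should first note that the paper itself does not prove this statement: it is stated as a Conjecture, the author records only that it is known in the ordinary case $d=(1,\dots,1)$ with skew-symmetric exchange matrices (via \cite[Proposition 3.4]{Kashaev11} and \cite[Theorem 5.1]{Inoue10a}), and the rest of the paper deliberately avoids relying on it. So there is no proof in the paper for your proposal to match; to succeed, your argument would have to settle a problem the author leaves open, and as written it does not.

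Concretely, there are three gaps. First, in the forward direction, classical $\sigma$-periodicity in Definition \ref{defn:period1} requires $x_{\sigma(i)}[m+1]=x_i$ as well as the $B$- and $y$-conditions; specializing the quantum mutation \eqref{eq:gYmut1} at $q=1$ only yields the $B$- and $y$-parts of \eqref{eq:gperiod1}, and you say nothing about how $x$-periodicity is recovered (this needs a separate synchronization argument, e.g.\ through $C$-matrices, $G$-vectors and $F$-polynomials, which for generalized cluster algebras must itself be justified). Second, and more seriously, in the converse direction the step that eliminates the scalar correction fails: if $Y_{\sigma(i)}[m+1]=q^{\alpha_i}Y_i$, then since $q^{\alpha_i}$ is a central unit, $q^{\alpha_i}Y_i$ satisfies exactly the same commutation relations \eqref{eq:Y1} as $Y_i$, so ``imposing the commutation relations'' cannot force $\alpha_i=0$. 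Ruling out precisely such residual powers of $q$ is the actual content of the known ordinary-case results, and it requires a different mechanism (in \cite{Kashaev11} it comes from a careful analysis specific to the skew-symmetric, $d=(1,\dots,1)$ setting). Third, the quantum separation formula with quantum $F$-polynomials for general mutation degrees, which your whole converse hinges on, is only announced: controlling the $q$-powers coming from the product of $|b_{ki}|$ shifted degree-$d_k$ factors in \eqref{eq:gYmut1} is exactly where the difficulty of the conjecture sits, and deferring it leaves the decisive step unproved. As it stands, the proposal is a plausible plan whose crucial steps are either missing or incorrect.
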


The conjecture is known to be true for the ordinary case $d=(1,\dots,1)$
with skew-symmetric exchange matrices, i.e., $r=(1,\dots,1)$
(\cite[Proposition 3.4]{Kashaev11} and \cite[Theorem 5.1]{Inoue10a}).
However, we do not rely on this conjecture in the rest of the paper.

From now on we specialize the $z$-variables $z$
of the initial seed $(B,Y,z)$ in 
\eqref{eq:Gseq1} to be real positive  numbers such that,
for each $i=1,\dots, n$,
$z_i:=(z_{i,s})_{s=0}^{d_i}$ satisfies
the generic condition
\eqref{eq:generic1}.

\begin{thm} [{\cite[Theorem 4.1]{Nakanishi14b}}]
Suppose that the sequence \eqref{eq:Gseq1} is $\sigma$-periodic
for some permutation $\sigma$.
Then, the following equality holds:
\begin{align}
\label{eq:Gdilog1}
\mathbf{\Psi}_{d_{k_1},z_{k_1}[1],q_{k_1}}
(Y^{\varepsilon_1 c_{k_1}[1]})^{\varepsilon_1}
\cdots
\mathbf{\Psi}_{d_{k_m},z_{k_m}[m],q_{k_m}}
(Y^{\varepsilon_m c_{k_m}[m]})^{\varepsilon_m}
=1,
\end{align}
where $z_{k_t}[t]:=(z_{k_t,s})_{s=0}^{d_{k_t}}$ as before,
 $c_{k_t}[t]$  is the  $c$-vector  for the sequence \eqref{eq:gseq1}
 defined by
\eqref{eq:gtropy1},
and $\varepsilon_t$ 
is the tropical sign of $c_{k_t}[t]$ as in \eqref{eq:ts1}.
\end{thm}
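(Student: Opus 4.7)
The plan is to follow the Fock--Goncharov / Keller--Kashaev strategy \cite{Fock07,Keller11,Kashaev11}, adapted to the higher-degree setting in \cite{Nakanishi14b}. The key structural observation is that the quantum $y$-mutation \eqref{eq:gYmut1} should factor as $\mu_k = \tau_k \circ \Ad(\mathbf{\Psi}_{d_k,z_k,q_k}(Y_k))$, where $\tau_k$ is an explicit monomial automorphism of the quantum torus encoding the tropical part of the mutation, and conjugation by $\mathbf{\Psi}_{d_k,z_k,q_k}(Y_k)$ produces the polynomial factors appearing in \eqref{eq:gYmut1}. One verifies this decomposition directly from the telescoping functional equation $\mathbf{\Psi}_{d,z,q}(q^2 x) = P_{d,z}(qx)\,\mathbf{\Psi}_{d,z,q}(x)$, which iterated $|b_{ki}|$ times produces exactly the $\prod_{m=1}^{|b_{ki}|}$ factor of \eqref{eq:gYmut1}, together with the Weyl-type commutation $Y_k Y^\alpha = q_k^{2\langle e_k,\alpha\rangle} Y^\alpha Y_k$ derived from \eqref{eq:Y2}.

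Iterating this factorization along the sequence \eqref{eq:Gseq1}, the composition of all $m$ mutations decomposes as a cumulative tropical automorphism together with an ordered product of adjoint actions $\Ad(\mathbf{\Psi}_{d_{k_t},z_{k_t}[t],q_{k_t}}(Y_{k_t}[t]))$. The next step is to express each $Y_{k_t}[t]$ in terms of the initial variables $Y_i$ by absorbing the intermediate tropical pieces into the surrounding conjugations. The argument of the $t$th quantum dilogarithm thereby becomes a monomial of the form $Y^{\varepsilon_t c_{k_t}[t]}$, with overall exponent $\varepsilon_t$ attached to the $\mathbf{\Psi}$-factor itself; this is precisely because the cumulative tropical action tracks the $c$-vector recursion \eqref{eq:crec1}, and the sign-coherence in Theorem \ref{thm:gsign1} ensures that the resulting monomial has nonnegative exponents, so that $\mathbf{\Psi}_{d,z,q}$ is well-defined on it.

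Now the $\sigma$-periodicity \eqref{eq:Gperiod1} of the quantum $Y$-seed sequence forces the cumulative tropical automorphism to act as the permutation $\sigma$ on $\{Y_i\}$, and by the second condition in \eqref{eq:Gperiod1} this action is trivial at the level of the $Y_i$ themselves. Consequently, the ordered product of adjoint actions on the left-hand side of \eqref{eq:Gdilog1} acts trivially on all $Y_i$. Invoking the standard rigidity argument inside the quantum torus (cf.\ \cite[Section 3]{Kashaev11}), an element whose adjoint action is trivial on all generators must be a scalar, and checking that this scalar equals $1$ -- for instance by inspecting the asymptotic limit $q \to 1^-$ via \eqref{eq:gasym2} combined with the classical identity of Theorem \ref{thm:gid3} -- yields \eqref{eq:Gdilog1}.

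I expect the main obstacle to be this last step: promoting ``$\Ad$ acts trivially on generators'' to ``the element equals $1$''. In the ordinary case this rests on a concrete Heisenberg-type realization and careful control of possible central factors; in the higher-degree case one must additionally verify that $\mathbf{\Psi}_{d,z,q}$ contributes no spurious central terms, for which the classical-limit check via \eqref{eq:gasym2} seems cleanest (but creates a circularity one needs to avoid when one wants to deduce the classical identity from the quantum one). A secondary, more mechanical obstacle is tracking the non-commutative ordering as each $Y_{k_t}[t]$ is pushed back to the initial coordinates: the tropical monomial transformations need not commute with the quantum dilogarithms, and one must ensure the cumulative conjugation consolidates into the exact ordered product on the left of \eqref{eq:Gdilog1}, with the correct $\varepsilon_t$-exponents on each factor.
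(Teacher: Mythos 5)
A point of reference first: this paper does not prove the statement at all --- it is quoted from \cite[Theorem 4.1]{Nakanishi14b}, accompanied only by the remark that extending the reciprocal case $z_i^*=z_i$ treated there to the general case (where the $z$-variables mutate by reversal) is straightforward. So the relevant comparison is with the proof in that reference, and up to the final step your outline is exactly that route (which goes back to \cite{Kashaev11}): the factorization of the quantum mutation \eqref{eq:gYmut1} into a monomial (tropical) automorphism composed with $\mathrm{Ad}$ of a quantum dilogarithm, verified from the functional equation $\mathbf{\Psi}_{d,z,q}(q^2x)=P_{d,z}(qx)\,\mathbf{\Psi}_{d,z,q}(x)$ and the Weyl relations, the telescoping along the sequence \eqref{eq:Gseq1}, and the use of sign coherence (Theorem \ref{thm:gsign1}) to produce the arguments $Y^{\varepsilon_t c_{k_t}[t]}$ and the exponents $\varepsilon_t$ in \eqref{eq:Gdilog1}.

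The genuine gap is your last step, fixing the scalar. Appealing to the asymptotics \eqref{eq:gasym2} together with Theorem \ref{thm:gid3} is both logically backwards here (the appendix derives the classical identity \emph{from} \eqref{eq:Gdilog1}, so within this argument you would be assuming what the quantum identity is meant to yield) and, more seriously, insufficient: the undetermined central factor is a function $c(q)$, and a leading-order comparison as $q\to 1^-$ in $1/\log q^2$ cannot exclude $c(q)\neq 1$, since any subleading correction is invisible at that order. The standard way to close the step needs no analytic limit at all: by sign coherence every exponent vector $\varepsilon_t c_{k_t}[t]$ is nonnegative, so each factor $\mathbf{\Psi}_{d_{k_t},z_{k_t}[t],q_{k_t}}(Y^{\varepsilon_t c_{k_t}[t]})^{\varepsilon_t}$ is a formal power series with constant term $1$ in a common ``positive'' completion of the quantum torus; the ordered product is then invertible with constant term $1$, its adjoint action is trivial on all $Y_i$ by $\sigma$-periodicity together with the triviality of the cumulative tropical part, and for $q$ not a root of unity the centralizer of the quantum torus in this completion consists of scalars. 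A central power series equals its constant term, hence the product is $1$, with no reference to the classical identity. With that replacement (plus explicitly carrying the reversal $z'_{i,s}=z_{i,d_i-s}$ through each factor, which is the only point where the present statement goes beyond the reciprocal case of \cite{Nakanishi14b}), your argument coincides with the proof of the cited source.
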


\begin{rem}
The identity \eqref{eq:Gdilog1} is called the  quantum dilogarithm identity
{\em in tropical form} in \cite{Nakanishi14b} and proved only for the reciprocal case $z_i^*= z_i$
therein, where the $z$-variables do not mutate.
However, it is straightforwardly extended to the nonreciprocal case as above.
\end{rem}

 We will ``derive" the classical dilogarithm identity 
\eqref{thm:gid3} from \eqref{eq:Gdilog1} in the limit $q\rightarrow 1$.
Note that we cannot simply apply the formula \eqref{eq:gasym2} to 
\eqref{eq:Gdilog1},
since the quantum $y$-variables therein are not real numbers,
but noncommutative variables.
So, our strategy, which is standard in quantum mechanics,
is as follows:
\begin{itemize}
\item Step 1.
Represent those 
quantum  $y$-variables in \eqref{eq:Gdilog1}
 by operators acting on functions.
\item Step 2.
Take the expectation value of the left-hand side
of  \eqref{eq:Gdilog1},
and express it as an integral.
\item Step 3.
Evaluate the integral in the limit $q\rightarrow 1$ by the saddle point method.
\end{itemize}
This process ``magically"  transforms the (Euler) dilogarithms of
higher degree \eqref{eq:hLi2int1} into the Rogers dilogarithms of higher degree
\eqref{eq:Rogers10}.
See \eqref{eq:int3}--\eqref{eq:int4} for a  preview.


\subsection{Step 1: Operator representation}
We express the deformation parameter $q$ as
\begin{align}
\label{eq:qh1}
q= e^{\lambda^2 \hbar \sqrt{-1}},
\end{align}
where $\hbar$ is a positive real number, and $\lambda$ be a complex number
sufficiently close to $1$ such that
$\mathrm{Im}\, \lambda >0 $.
Later we will take $\hbar \rightarrow 0$ and $\lambda \rightarrow 1$.
 (See \cite[Appendix A.1]{Kashaev11} for the explanation of  introducing the parameter $\lambda$.)
 
Consider the Hilbert space $L^2(\mathbb{R}^n)$. Let $u=(u_i)_{i=1}^n$ denote the  coordinate
of $\mathbb{R}^n$.
Let $\hat{u}_i$ and $\hat{p}_i$ be the standard {\em position} and {\em momentum operators} on 
$L^2(\mathbb{R}^n)$ (densely) defined by
\begin{align}
(\hat{u}_i f)(u) = u_i f(u),
\quad
(\hat{p}_i f)(u) = \frac{\hbar}{\sqrt{-1}} \frac{\partial f}{\partial u_i}(u),
\quad
f\in L^2(\mathbb{R}^2).
\end{align}
Thus, we have the commutation relations
\begin{align}
[\hat{u}_i,\hat{u}_j]=
[\hat{p}_i, \hat{p}_j]=0,
\quad
[\hat{p}_i, \hat{u}_j]=\frac{\hbar}{\sqrt{-1} }\delta_{ij}.
\end{align}

Let $B$ be the initial exchange matrix of the sequence 
\eqref{eq:Gseq1}. Define
\begin{align}
\label{eq:op1}
\hat{w}_i = \sum_{j=1}^n b_{ji} \hat{u}_j,
\quad
\hat{D}_i = r_i \hat{p}_i + \hat{w}_i,
\quad
\hat{{Y}}_i
=
\exp 
(\lambda \hat{D}_i ).
\end{align}
Then, we have
\begin{align}
\label{eq:com1}
[\hat{D}_i, \hat{D}_j]
=2\hbar \sqrt{-1} r_j b_{ji},
\quad
\hat{{Y}}_i
\hat{{Y}}_j
=q_j^{2b_{ji}} 
\hat{{Y}}_j
\hat{{Y}}_i.
\end{align}
Thus, we have a representation of the initial quantum $y$-variables
satisfying \eqref{eq:Y1}.
More generally, for any $\alpha\in \mathbb{Z}^n$, we define
\begin{align}
\label{eq:Yhat2}
\hat{{Y}}^{\alpha}=
\exp (\lambda\alpha \hat{D}),
\quad
\alpha \hat{D}
=\sum_{i=1}^n
\alpha_i \hat{D}_i.
\end{align}
Then, 
\begin{align}
\label{eq:Y21}
q^{\langle\alpha,\beta\rangle}\hat{Y}^{\alpha}\hat{Y}^{\beta}
=\hat{Y}^{\alpha+\beta},
\quad
\langle\alpha,\beta\rangle=
\sum_{i,j=1}^n
\alpha_i r_i b_{ij}\beta_j,
\end{align}
Thus, they give a representation of $Y^{\alpha}$'s in \eqref{eq:Y2}.

Next we describe the mutations of the quantum $y$-variables.
Along with the sequence 
\eqref{eq:Gseq1} with $\sigma$-periodicity,
we introduce a sequence of linear  transformations
\begin{align}
\label{eq:seqmap1}
\mathbb{R}^n
\buildrel \rho_1\over \rightarrow
\mathbb{R}^n
\buildrel \rho_2\over \rightarrow
\cdots
\buildrel \rho_m\over \rightarrow
\mathbb{R}^n
\buildrel \sigma \over \rightarrow
\mathbb{R}^n,
\end{align}
where, for $t=1,\dots,m$,
\begin{align}
\label{eq:rho1}
\begin{split}
\rho_t &:  \mathbb{R}^n  \rightarrow  \mathbb{R}^n,
\quad (u_i)_{i=1}^n \mapsto (u'_i)_{i=1}^n,\\
u'_i&=
\begin{cases}
\displaystyle
-u_{k_t} +
d_{k_t}
\sum_{j=1}^n [-\varepsilon_t b_{jk_t}[t]]_+  u_j
& i = k_t
\\
u_i & i \neq k_t,
\end{cases}
\end{split}
\end{align}
and
\begin{align}
\sigma &:  \mathbb{R}^n  \rightarrow  \mathbb{R}^n,
\quad (u_i)_{i=1}^n \mapsto (u'_i)_{i=1}^n,
\quad
u'_i=u_{\sigma(i)}.
\end{align}

The sequence \eqref{eq:seqmap1} induces the sequence of the maps
\begin{align}
\label{eq:seqmap2}
L^2(\mathbb{R}^n)
\buildrel \rho_1^*\over \leftarrow
L^2(\mathbb{R}^n)
\buildrel \rho_2^*\over \leftarrow
\cdots
\buildrel \rho_m^*\over \leftarrow
L^2(\mathbb{R}^n)
\buildrel \sigma^* \over \leftarrow
L^2(\mathbb{R}^n),
\end{align}
where $\rho_t^* (f) = f\circ \rho_t$ and $\sigma^* (f) = f\circ \sigma$ for $f\in L^2(\mathbb{R}^n)$.

\begin{lem} 
If the sequence \eqref{eq:Gseq1} is $\sigma$-periodic,
the following periodicity holds:
\begin{align}
\label{eq:rho4}
\sigma \circ \rho_m \circ \dots \circ \rho_1 &= \mathrm{id},
\\
\label{eq:rho3}
\rho_1^* \circ \cdots \circ \rho_m^* \circ \sigma^*& = \mathrm{id}.
\end{align}
\end{lem}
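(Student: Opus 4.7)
The plan is to identify each $\rho_t$ with the transpose of the elementary $g$-matrix mutation, and then to use the $\sigma$-periodicity together with the $C$-$G$ duality to evaluate the composition explicitly. First, I would recall from \cite{Nakanishi14a} that the classical mutation sequence \eqref{eq:gseq1} carries a family of $g$-matrices $G[t]\in M_n(\mathbb{Z})$ with $G[1]=I$ and the recursion $G[t+1]=G[t]\cdot E_t$, where $E_t$ agrees with the identity outside its $k_t$th column, which equals $-e_{k_t}+d_{k_t}\sum_{j\neq k_t}[-\varepsilon_t b_{jk_t}[t]]_+\,e_j$. By direct inspection of \eqref{eq:rho1}, the matrix of $\rho_t$ on column vectors in the standard basis is exactly $E_t^T$, so
\begin{equation*}
\rho_m\circ\cdots\circ\rho_1 \;=\; E_m^T\cdots E_1^T \;=\; (E_1\cdots E_m)^T \;=\; G[m+1]^T.
\end{equation*}

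Next, I would pass from the quantum $\sigma$-periodicity \eqref{eq:Gperiod1} to the classical equality $y_{\sigma(i)}[m+1]=y_i$ (the conjectured equivalence just noted in the text, and the natural heuristic input in this appendix). Tropicalising via \eqref{eq:gtropy1} forces $c_{j,\sigma(i)}[m+1]=\delta_{ji}$, i.e.\ $C[m+1]=P_{\sigma^{-1}}$ in the convention $P_\tau e_j=e_{\tau(j)}$. The $C$-$G$ duality $R^{-1}G[t]^T R\,C[t]=I$ (from the Remark preceding \eqref{eq:Gperiod1}), combined with $r_{\sigma(i)}=r_i$ from \eqref{eq:rsym1}, then yields
\begin{equation*}
G[m+1]^T \;=\; R\,C[m+1]^{-1}R^{-1} \;=\; R\,P_\sigma\,R^{-1} \;=\; P_\sigma,
\end{equation*}
whose action on $\mathbb{R}^n$ is $u\mapsto(u_{\sigma^{-1}(i)})_i$. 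Post-composition with $\sigma:u\mapsto(u_{\sigma(i)})_i$ produces the identity, which is \eqref{eq:rho4}. Equation \eqref{eq:rho3} then follows immediately by pullback:
\begin{equation*}
\rho_1^*\circ\cdots\circ\rho_m^*\circ\sigma^* \;=\; (\sigma\circ\rho_m\circ\cdots\circ\rho_1)^* \;=\; \mathrm{id}^* \;=\; \mathrm{id}.
\end{equation*}

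The main obstacle is the passage from the quantum $\sigma$-periodicity hypothesis to the classical $y$-periodicity needed to pin down $C[m+1]$; this is precisely the unproven conjecture stated just before the lemma. Since the whole appendix is explicitly heuristic, the invocation is acceptable; a conjecture-free route would instead specialise at $q=1$ (where the quantum $Y$-variables reduce to the classical $\hat{y}$-variables) to obtain $\hat{y}_{\sigma(i)}[m+1]=\hat{y}_i$, and then deduce the required $C$- and $G$-matrix periodicities from the separation formula \eqref{eq:sep1} together with the periodicity of the $F$-polynomials. Aligning the $d_{k_t}$ and $\varepsilon_t$ sign conventions of the $g$-vector recursion from \cite{Nakanishi14a} with the formula \eqref{eq:rho1} is the only remaining bookkeeping.
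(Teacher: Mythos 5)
Your argument is essentially the paper's own proof, made explicit in matrix form: the paper likewise passes to the classical $y$-variable periodicity via the $q\to 1$ specialization (the heuristic step you flag), deduces $\sigma$-periodicity of the $c$-vectors, invokes the $c$--$g$ duality $R^{-1}G[t]^TRC[t]=I$ together with $r_{\sigma(i)}=r_i$, and observes that $\rho_t$ and $\sigma$ are exactly the $g$-vector mutation maps, giving \eqref{eq:rho4} and then \eqref{eq:rho3} by pullback. Your explicit computation $\rho_m\circ\cdots\circ\rho_1=G[m+1]^T=P_\sigma$ is a correct, slightly more detailed rendering of the same route.
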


\begin{proof}
Suppose that the sequence \eqref{eq:Gseq1} is $\sigma$-periodic.
Then, taking $q\rightarrow 1$ limit, the corresponding (classical)
$y$-variables in the sequence  \eqref{eq:gseq1} are also $\sigma$-periodic.
Then, the associated $c$-vectors in \eqref{eq:gtropy1} are also $\sigma$-periodic.
Thus, the associated $g$-vectors \cite{Nakanishi14a}, which are not explained here,
are also $\sigma$-periodic due to the duality of $c$- and $g$-vectors
\cite[Proposition 3.21]{Nakanishi14a}.
On the other hand, the linear transformations $\rho_t$ and $\sigma$
exactly describe the mutations of $g$-vectors along the sequence 
 \eqref{eq:gseq1}. Therefore, we have the equality \eqref{eq:rho4}.
The equality \eqref{eq:rho3} follows from \eqref{eq:rho4}.
\end{proof}

For any invertible linear map $\Upsilon: L^2(\mathbb{R}^n) \rightarrow L^2(\mathbb{R}^n)$ 
and
any linear operator $\hat{O}$ acting on $L^2(\mathbb{R}^n)$, let
\begin{align}
\mathsf{Ad}(\Upsilon)(\hat{O}):=
\Upsilon \circ \hat{O} \circ \Upsilon^{-1}.
\end{align}

\begin{lem}
\label{lem:ad2}
The following formulas hold:
For $t=1,\dots,m$,
\begin{align}
\label{eq:ad1}
\mathsf{Ad}(\rho_t^*)(\hat{u}_i) &=
\begin{cases}
\displaystyle
-\hat{u}_{k_t} +
d_{k_t}\sum_{j=1}^n [-\varepsilon_t b_{jk_t}[t]]_+  \hat{u}_j
& i = k_t
\\
\hat{u}_i
& i\neq k_t,
\end{cases}
\\
\label{eq:ad2}
\mathsf{Ad}(\rho_t^*)(\hat{p}_i) &=
\begin{cases}
-\hat{p}_{k_t}
& i = k_t
\\
\hat{p}_i
+d_{k_t}[-\varepsilon b_{ik_t}[t]]_+\hat{p}_{k_t}
& i\neq k_t,
\end{cases}
\\
\label{eq:ad3}
\mathsf{Ad}(\sigma^*)(\hat{u}_i) &=\hat{u}_{\sigma(i)},
\quad
\mathsf{Ad}(\sigma^*)(\hat{p}_i) =\hat{p}_{\sigma(i)}.
\end{align}
\end{lem}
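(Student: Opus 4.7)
The plan is to treat $\rho_t^*$ and $\sigma^*$ as pullbacks along linear bijections of $\mathbb{R}^n$, so that the adjoint actions on $\hat{u}_i$ (a multiplication operator) and $\hat{p}_i$ (a differential operator) reduce to the elementary transformation laws for coordinates and partial derivatives. Specifically, if $\Phi:\mathbb{R}^n\to\mathbb{R}^n$ is an invertible linear map with matrix entries $M_{ij}:=\partial \Phi(u)_i/\partial u_j$ and $\Phi^*$ is the corresponding pullback on $L^2(\mathbb{R}^n)$, a direct chain-rule computation gives the general formulas
\begin{align*}
\mathsf{Ad}(\Phi^*)(\hat{u}_i) = \sum_{j=1}^n M_{ij}\,\hat{u}_j,
\qquad
\mathsf{Ad}(\Phi^*)(\hat{p}_i) = \sum_{j=1}^n (M^{-1})_{ji}\,\hat{p}_j.
\end{align*}
Once these are in place, the lemma is a matter of reading off the matrices.

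First I would apply the formulas to $\sigma^*$. The matrix of $\sigma$ is the permutation $(\delta_{j,\sigma(i)})_{i,j}$, whose inverse transpose has entries $(\delta_{j,\sigma(i)})$ as well; substitution immediately yields \eqref{eq:ad3}.

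Next I would apply the formulas to $\rho_t^*$. From \eqref{eq:rho1}, the matrix of $\rho_t$ is the identity except in row $k_t$, where
\[
M_{k_t,j} = -\delta_{j,k_t} + d_{k_t}[-\varepsilon_t b_{jk_t}[t]]_+.
\]
The general position formula then gives \eqref{eq:ad1} immediately (the $j=k_t$ term in the sum automatically vanishes because $b_{k_tk_t}[t]=0$). For the momentum formula, I would first check that $\rho_t$ is an involution: applying $\rho_t$ twice to the $k_t$-coordinate returns $u_{k_t}$ since the other coordinates are fixed and the $j=k_t$ term in the shift does not contribute. Consequently $M^{-1}=M$, so $(M^{-1})_{ji}=M_{ji}$, and splitting into the cases $i=k_t$ (where only the diagonal entry $-1$ in column $k_t$ survives) and $i\neq k_t$ (where one collects $\delta_{j,i}$ from row $j$ together with the row-$k_t$ contribution $d_{k_t}[-\varepsilon_t b_{ik_t}[t]]_+$) recovers the two branches of \eqref{eq:ad2}.

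This is essentially a direct chain-rule exercise, with no new structural input beyond the definitions of $\hat{u}_i$, $\hat{p}_i$, $\rho_t$, and $\sigma$. The main practical hurdle is simply index bookkeeping: one must keep track of which variable is being differentiated and use $b_{k_tk_t}[t]=0$ at the right moments to confirm that the apparently extra $j=k_t$ terms drop out, so that the final expressions agree exactly with the stated formulas.
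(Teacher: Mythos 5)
Your proposal is correct and follows essentially the same route as the paper: the paper also reads \eqref{eq:ad1} and \eqref{eq:ad3} directly off the definitions and obtains \eqref{eq:ad2} by the chain rule applied to $(\rho_t)^{-1}$, which coincides with $\rho_t$ itself — exactly your observation that $\rho_t$ is an involution, so $M^{-1}=M$ (using $b_{k_tk_t}[t]=0$). Your explicit general pullback formulas for $\hat{u}_i$ and $\hat{p}_i$ just make the paper's terse "chain rule" step fully explicit.
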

\begin{proof}
The equalities \eqref{eq:ad1} and \eqref{eq:ad3} are immediate from the definitions
of $\rho_t$ and $\sigma$.
The equality \eqref{eq:ad2} is obtained
by applying the chain rule for
the inverse of $\rho_t$
\begin{align}
\label{eq:rho2}
\begin{split}
(\rho_t)^{-1} &:  \mathbb{R}^n  \rightarrow  \mathbb{R}^n,
\quad (u'_i)_{i=1}^n \mapsto (u_i)_{i=1}^n,\\
u_i&=
\begin{cases}
\displaystyle
-u'_{k_t} +
d_{k_t}
\sum_{j=1}^n [-\varepsilon_t b_{jk_t}[t]]_+  u'_j
& i = k_t
\\
u'_i & i \neq k_t.
\end{cases}
\end{split}
\end{align}
\end{proof}

\begin{rem}
In \cite{Kashaev11}
the separate symbols  $u[t]$  ($t=1,\dots,m+2$) 
are employed for the coordinate
of each space $\mathbb{R}^n$
 in the sequence \eqref{eq:seqmap2} from left to right.
Here, we do not use them for  simplicity.
\end{rem}

Let us define the following operators for $t=1,\dots, m$. (For $t=1$, it is already given
in \eqref{eq:op1} and \eqref{eq:Yhat2}.)
\begin{align}
\label{eq:op2}
\hat{w}_i[t] &= \sum_{j=1}^n b_{ji}[t] \hat{u}_j,
\quad
\hat{D}_i[t] = r_i \hat{p}_i + \hat{w}_i[t],
\quad
\hat{{Y}}_i[t]
=
\exp 
(\lambda \hat{D}_i[t] ),
\\
\label{eq:Yhat3}
\hat{{Y}}^{\alpha}[t]&=
\exp (\lambda\alpha \hat{D}[t]),
\quad
\alpha \hat{D}[t]
=\sum_{i=1}^n
\alpha_i \hat{D}_i[t].
\end{align}
Then, like the $t=1$ case  \eqref{eq:com1},
we have the following commutation relations: For $t=1,\dots,m$,
\begin{align}
\label{eq:com2}
[\hat{D}_i[t], \hat{D}_j[t]]
&=2\hbar \sqrt{-1} r_j b_{ji}[t],
\quad
\hat{{Y}}_i[t]
\hat{{Y}}_j[t]
=q_j^{2b_{ji}[t]} 
\hat{{Y}}_j[t]
\hat{{Y}}_i[t],
\\
\label{eq:Y3}
q^{\langle\alpha,\beta\rangle_t}Y^{\alpha}[t]Y^{\beta}[t]
&=Y^{\alpha+\beta}[t],
\quad
\langle\alpha,\beta\rangle_t=:
\sum_{i,j=1}^n
\alpha_i r_i b_{ij}[t]\beta_j.
\end{align}

\begin{lem}
\label{lem:ad1}
The following formulas hold: For $t=1,\dots,m$,
\begin{align}
\label{eq:ad4}
\mathsf{Ad}(\rho_t^*)(\hat{w}_i[t+1]) &=
\begin{cases}
-\hat{w}_{k_t}[t] 
& i = k_t
\\
\hat{w}_i[t]
+d_{k_t}[\varepsilon_t b_{k_ti}[t]]_+\hat{w}_{k_t}[t]
& i\neq k_t,
\end{cases}
\\
\label{eq:ad5}
\mathsf{Ad}(\rho_t^*)(r_i\hat{p}_i) &=
\begin{cases}
-r_{k_t}\hat{p}_{k_t}
& i = k_t
\\
r_i\hat{p}_i
+d_{k_t}[\varepsilon_t b_{k_t i}[t]]_+ r_{k_t}\hat{p}_{k_t}
& i\neq k_t.
\end{cases}
\end{align}
\end{lem}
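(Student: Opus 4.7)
The plan is to verify both identities by direct, bare-hands computation, using Lemma \ref{lem:ad2} as the engine and the mutation rule \eqref{eq:gBmut1} together with skew-symmetrizability to reorganize the result. Since $\mathsf{Ad}(\rho_t^*)$ is a ring homomorphism on operators and $\hat{w}_i[t+1]$, $r_i\hat{p}_i$ are linear combinations of the $\hat{u}_j$, $\hat{p}_j$, it suffices to substitute \eqref{eq:ad1} and \eqref{eq:ad2} and then match coefficients.

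The momentum identity is the easier one. First I compute $\mathsf{Ad}(\rho_t^*)(r_i\hat{p}_i)=r_i \mathsf{Ad}(\rho_t^*)(\hat{p}_i)$ using \eqref{eq:ad2}. For $i=k_t$ this is immediate. For $i\neq k_t$ I must check the identity $r_i[-\varepsilon_t b_{ik_t}[t]]_+=r_{k_t}[\varepsilon_t b_{k_ti}[t]]_+$, which follows at once from the skew-symmetrizer relation $r_i b_{ik_t}[t]=-r_{k_t} b_{k_ti}[t]$, analyzing the two cases $\varepsilon_t=\pm1$ and the sign of $b_{ik_t}[t]$.

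For the position-type identity I expand $\hat{w}_i[t+1]=\sum_j b_{ji}[t+1]\hat{u}_j$ and apply \eqref{eq:ad1} to each $\hat{u}_j$, so that only $j=k_t$ produces a nontrivial contribution involving the $d_{k_t}\sum_l[-\varepsilon_t b_{lk_t}[t]]_+\hat{u}_l$ term. The case $i=k_t$ is quick: \eqref{eq:gBmut1} gives $b_{jk_t}[t+1]=-b_{jk_t}[t]$ for every $j$, and $b_{k_tk_t}[t]=0$ kills the $j=k_t$ term so the shift disappears, leaving $-\hat{w}_{k_t}[t]$. The case $i\neq k_t$ is the one I expect to require most care: after substituting the mutation rule $b_{ji}[t+1]=b_{ji}[t]+d_{k_t}([-b_{jk_t}[t]]_+b_{k_ti}[t]+b_{jk_t}[t][b_{k_ti}[t]]_+)$ for $j\neq k_t$ and $b_{k_ti}[t+1]=-b_{k_ti}[t]$, I will collect the terms proportional to $\hat{w}_{k_t}[t]=\sum_j b_{jk_t}[t]\hat{u}_j$ and to the auxiliary sum $\sum_j[-\varepsilon_t b_{jk_t}[t]]_+\hat{u}_j$, and I expect everything apart from $\hat{w}_i[t]+d_{k_t}[\varepsilon_t b_{k_ti}[t]]_+\hat{w}_{k_t}[t]$ to cancel.

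The decisive simplification in that final step is the elementary identity $[a]_+-[-a]_+=a$ applied to $a=b_{jk_t}[t]$: in the $\varepsilon_t=+1$ case the auxiliary contribution cancels against the $d_{k_t}[-b_{jk_t}[t]]_+ b_{k_ti}[t]$ piece of the mutation formula, while in the $\varepsilon_t=-1$ case the difference $[-b_{jk_t}[t]]_+-[b_{jk_t}[t]]_+$ equals $-b_{jk_t}[t]$, producing exactly one extra copy of $-b_{k_ti}[t]\hat{w}_{k_t}[t]$ which combines with $d_{k_t}[b_{k_ti}[t]]_+\hat{w}_{k_t}[t]$ to give $d_{k_t}[-b_{k_ti}[t]]_+\hat{w}_{k_t}[t]$, as required. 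This sign-bookkeeping is the main (and really the only) obstacle; once it is laid out carefully, the lemma falls out.
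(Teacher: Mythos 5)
Your proof is correct and follows the paper's route: the paper simply states that \eqref{eq:ad4} and \eqref{eq:ad5} follow from Lemma \ref{lem:ad2}, and your computation (substituting \eqref{eq:ad1}--\eqref{eq:ad2}, using \eqref{eq:gBmut1}, the skew-symmetrizer relation $r_i b_{ik_t}[t]=-r_{k_t}b_{k_ti}[t]$, and $[a]_+-[-a]_+=a$) is exactly the verification the paper leaves to the reader. The sign analysis in both cases $\varepsilon_t=\pm1$ checks out.
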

\begin{proof}
They follow from Lemma \ref{lem:ad2}.
\end{proof}

\begin{rem}
A subtle difference between the second formulas of
\eqref{eq:ad2} and \eqref{eq:ad5} is important,
since $b_{k_t i}[t]\neq -b_{i k_t}[t]$ in general.
\end{rem}

The conclusion of this section is given as follows:
\begin{prop}
\label{prop:Yrep1}
(1)  Quantum tropical mutations: For $t=1,\dots,m$,
\begin{align}
\label{eq:gYhatmut1}
\mathsf{Ad}(\rho_t^*)(\hat{Y}_i[t+1])&=
\begin{cases}
\hat{Y}_{k_t}[t]^{-1}& i=k_t\\ 
\hat{Y}^{e_i + d_{k_t} [\varepsilon_t b_{k_t i}[t]]_+ e_{k_t}}[t]
& i\neq k_t.
\end{cases}
\end{align}
\par
(2) 
For $t=1,\dots,m$,
\begin{align}
\label{eq:ad9}
\mathsf{Ad}(\rho_1^*)\cdots\mathsf{Ad}(\rho_t^*)(\hat{Y}_{i}[t+1])
=\hat{Y}^{c_i[t+1]}.
\end{align}
\end{prop}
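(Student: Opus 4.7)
Part (1) is a direct exponentiation of Lemma~\ref{lem:ad1}, while Part (2) is proven by induction on $t$, with Part (1) providing the inductive step and sign-coherence (Theorem~\ref{thm:gsign1}) reconciling the outcome with the $c$-vector recursion \eqref{eq:crec1}.

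For Part (1), I would use that $\mathsf{Ad}(\rho_t^*)$ is conjugation by an invertible operator, hence an algebra automorphism, so it formally satisfies $\mathsf{Ad}(\rho_t^*)(\exp X) = \exp(\mathsf{Ad}(\rho_t^*)(X))$. Apply this to $\hat{Y}_i[t+1] = \exp(\lambda \hat{D}_i[t+1])$ with $\hat{D}_i[t+1] = r_i\hat{p}_i + \hat{w}_i[t+1]$ and invoke Lemma~\ref{lem:ad1}. For $i=k_t$, both $\hat{w}_{k_t}[t+1]$ and $r_{k_t}\hat{p}_{k_t}$ flip sign, giving $\mathsf{Ad}(\rho_t^*)(\hat{D}_{k_t}[t+1]) = -\hat{D}_{k_t}[t]$ and hence $\hat{Y}_{k_t}[t]^{-1}$. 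For $i\neq k_t$, both terms pick up the same correction $d_{k_t}[\varepsilon_t b_{k_t i}[t]]_+$ (applied to $\hat{w}_{k_t}[t]$ and $r_{k_t}\hat{p}_{k_t}$ respectively), summing to $\hat{D}_i[t] + d_{k_t}[\varepsilon_t b_{k_t i}[t]]_+ \hat{D}_{k_t}[t]$, whose exponential is $\hat{Y}^{e_i + d_{k_t}[\varepsilon_t b_{k_t i}[t]]_+ e_{k_t}}[t]$ by \eqref{eq:Yhat3}.

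For Part (2), I would induct on $t$, with the trivial base case $t=0$ (where $c_i[1]=e_i$). Assuming the statement for $t-1$, the first key move is to upgrade the hypothesis from basis elements $\hat{Y}_j[t]$ to arbitrary $\hat{Y}^{\alpha}[t]$: since each $\mathsf{Ad}(\rho_s^*)$ preserves the linear space spanned by $\hat{u}_i,\hat{p}_i$ (Lemma~\ref{lem:ad2}), the composition sends $\hat{D}_j[t]$ to some element of that space, and comparing exponentials against the inductive hypothesis forces this element to be $\sum_i c_{ij}[t]\hat{D}_i$, so that
\[
\mathsf{Ad}(\rho_1^*)\cdots\mathsf{Ad}(\rho_{t-1}^*)(\hat{Y}^{\alpha}[t]) = \hat{Y}^{C[t]\alpha}.
\]
Combined with Part (1) applied to $\mathsf{Ad}(\rho_t^*)(\hat{Y}_i[t+1])$, this gives $\hat{Y}^{-c_{k_t}[t]}$ for $i=k_t$, matching $c_{k_t}[t+1]=-c_{k_t}[t]$, and $\hat{Y}^{c_i[t] + d_{k_t}[\varepsilon_t b_{k_t i}[t]]_+ c_{k_t}[t]}$ for $i\neq k_t$.

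The remaining task, which I expect to be the main (though not deep) obstacle, is to identify this last exponent with $c_i[t+1]$ as given by \eqref{eq:crec1}; component-wise, the required identity is
\[
[\varepsilon_t b_{k_t i}[t]]_+\, c_{j k_t}[t] = [-c_{j k_t}[t]]_+\, b_{k_t i}[t] + c_{j k_t}[t]\,[b_{k_t i}[t]]_+.
\]
Here sign-coherence (Theorem~\ref{thm:gsign1}) is essential: the uniform sign $\varepsilon_t$ of the components of $c_{k_t}[t]$ reduces the identity to the two elementary cases $\varepsilon_t=\pm 1$, both verified by direct manipulation using $x=[x]_+-[-x]_+$. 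Absent sign-coherence, the two equivalent-looking forms of the $c$-vector mutation would simply not coincide.
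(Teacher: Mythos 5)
Your proposal is correct and follows essentially the same route as the paper: Part (1) is obtained by exponentiating Lemma \ref{lem:ad1}, and Part (2) by induction, using sign-coherence (Theorem \ref{thm:gsign1}) to recast the $c$-vector recursion \eqref{eq:crec1} in the tropical-sign form $c_{ij}[t+1]=c_{ij}[t]+d_{k_t}c_{ik_t}[t][\varepsilon_t b_{k_tj}[t]]_+$ so that it matches the exponents produced by \eqref{eq:gYhatmut1}. Your explicit linearity step upgrading the inductive hypothesis to arbitrary $\hat{Y}^{\alpha}[t]$ is a detail the paper leaves implicit in ``inductively obtain,'' but it is the same argument.
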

\begin{proof}
(1). 
This follows from Lemma \ref{lem:ad1}.
(2). Note that the second formula of 
\eqref{eq:crec1} is also written as
\begin{align}
\label{eq:crec2}
c_{ij}[t+1]=
c_{ij}[t]
+
d_{k_t}(
[c_{ik_t}[t]]_+ b_{k_t j}[t]
+ c_{ik_t }[t]
[-b_{k_t j}[t]]_+)
\quad
j\neq k_t.
\end{align}
In particular, for the tropical sign $\varepsilon_t$,
we have $[-\varepsilon_t c_{ik_t}[t]]_+=0$.
Thus, it is also equivalent to
\begin{align}
\label{eq:crec3}
c_{ij}[t+1]=
c_{ij}[t]
+
d_{k_t} c_{ik_t}[t]
[\varepsilon_t b_{k_tj}[t]]_+
\quad
j\neq k_t.
\end{align}
Comparing \eqref{eq:gYhatmut1} and \eqref{eq:crec3},
we inductively obtain the formula \eqref{eq:ad9} for $t=1,\dots,m$.
\end{proof}

\subsection{Step 2: Integral expression}
In the quantum dilogarithm identity \eqref{eq:Gdilog1}
let us replace the initial quantum  $y$-variables $Y_i$
with their operator representations $\hat{Y}_i$.
Then, multiplying the left-hand side of the equality \eqref{eq:rho3},
we obtain the following equality:
\begin{align}
\begin{split}
\label{eq:Gdilog2}
\mathbf{\Psi}_{d_{k_1},z_{k_1}[1],q_{k_1}}
(\hat{Y}^{\varepsilon_1 c_{k_1}[1]})^{\varepsilon_1}
&
\cdots
\mathbf{\Psi}_{d_{k_m},z_{k_m}[m],q_{k_m}}
(\hat{Y}^{\varepsilon_m c_{k_m}[m]})^{\varepsilon_m}
\rho_1^*  \cdots  \rho_m^*  \sigma^* = \mathrm{id},
\end{split}
\end{align}
where the composition symbol $\circ$ is omitted for simplicity.

Using \eqref{eq:ad9} repeatedly,
it is transformed into the following equality,
which is a generalization of the quantum dilogarithm identity {\em in  local form}
in \cite{Kashaev11}:
\begin{align}
\begin{split}
\label{eq:Gdilog3}
\mathbf{\Psi}_{d_{k_1},z_{k_1}[1],q_{k_1}}
(\hat{Y}_{k_1}[1]^{\varepsilon_1})^{\varepsilon_1}
\rho_1^* 
& \mathbf{\Psi}_{d_{k_2},z_{k_2}[2],q_{k_2}}
(\hat{Y}_{k_2}[2]^{\varepsilon_2})^{\varepsilon_2}
\rho_2^* \\
&
\cdots
\mathbf{\Psi}_{d_{k_m},z_{k_m}[m],q_{k_m}}
(\hat{Y}_{k_m}[m]^{\varepsilon_m})^{\varepsilon_m}
   \rho_m^*  \sigma^* = \mathrm{id}.
\end{split}
\end{align}

Using the Dirac's bra-ket notation, 
 we introduce a family of  common eigenvectors
$ | u \rangle$ and $| p \rangle$
($ u, p \in \mathbb{R}^n$)
 of
the position  operators $\hat{u}_i $'s and momentum operators $\hat{p}_i$'s,
respectively, 
and their complex conjugate $\langle u |$ and $\langle p |$.
They satisfy
\begin{align}
\hat{u}_i | u \rangle &= u_i | u \rangle,
\quad
\hat{p}_i | p \rangle = p_i | p \rangle,
\\
\langle u | u' \rangle &= \prod_{i=1}^n \delta(u_i-u'_i),
\quad
\langle p | p' \rangle = (2\pi \hbar)^n \prod_{i=1}^n \delta(p_i-p'_i),
\\
\langle u | p \rangle &= \exp \left( \frac{\sqrt{-1}}{\hbar} u p\right),
\quad
\langle p | u \rangle = \exp \left( - \frac{\sqrt{-1}}{\hbar} u p\right),
\quad
u p= \sum_{i=1}^n u_i p_i.
\end{align} 
In particular, we have,
for vectors $\langle u | $ and $ |p \rangle$,
\begin{align}
\label{eq:exp1}
\frac{\langle u |  \hat{D}_i[t] |p \rangle}
{\langle u | p \rangle}
=
 r_i p_i + w_i[t],
\quad
w_i[t]=
\sum_{j=1}^n b_{ji}[t] u_j.
\end{align}
We  also have the completeness property,
\begin{align}
1= \int du   | u \rangle \langle u|,
\quad
1= \int \frac{dp}{(2\pi\hbar)^n}   | p \rangle \langle p|.
\end{align}

Let
\begin{align}
\hat{O}=
\label{eq:Gdilog4}
\mathbf{\Psi}_{d_{k_1},z_{k_1}[1],q_{k_1}}
(\hat{Y}_{k_1}[1])^{\varepsilon_1}
\rho_1^* 
\cdots
\mathbf{\Psi}_{d_{k_m},z_{k_m}[m],q_{k_m}}
(\hat{Y}_{k_m}[m])^{\varepsilon_m}
   \rho_m^*  \sigma^* 
\end{align}
be the left-hand side of \eqref{eq:Gdilog3},
{\em which is actually the identity operator} due to 
\eqref{eq:Gdilog3}.
Choose an arbitrary position eigenvector $|u[1]\rangle$.
Then, we introduce the momentum eigenvector $|\tilde{p}[1]\rangle$
whose eigenvalues are given by
\begin{align}
\tilde{p}_i[1]=w_i[1] :=\sum_{i=1}^n b_{ji}[1] u_j[1].
\end{align}
Let
\begin{align}
F(u[1],\tilde{p}[1]):=\frac{\langle u[1] |  \hat{O} |\tilde{p}[1] \rangle}
{\langle u[1] | \tilde{p}[1] \rangle},
\end{align}
{\em which is actually $1$}.
Skipping some detail (see   \cite[Sections 5.2 and A.3]{Kashaev11}),
we obtain the following integral expression, using
\eqref{eq:exp1}:
\begin{align}
\label{eq:int1}
\begin{split}
F(u[1],\tilde{p}[1])
=
&(2\pi \hbar)^{-n(m-1)}
\int dp[1] \cdots dp[m-1] du[2] \dots du[m]\\
&\times 
\mathbf{\Psi}_{d_{k_1},z_{k_1}[1],q_{k_1}}
(y_{k_1}[1]^{\varepsilon_1})^{\varepsilon_1}
 \exp \left( \frac{\sqrt{-1}}{\hbar} u[1] (p[1]-\tilde{p}[1])\right)
 \cdots
 \\
 &\times 
\mathbf{\Psi}_{d_{k_m},z_{k_m}[m],q_{k_m}}
(y_{k_m}[m]^{\varepsilon_m})^{\varepsilon_m}
 \exp \left( \frac{\sqrt{-1}}{\hbar} u[m] (p[m]-\tilde{p}[m])\right),
\end{split}
\end{align}
where ${p}_i[m]$ is determined by
\begin{align}
\label{eq:rp2}
r_{\sigma^{-1}(i)} \tilde{p}_{\sigma^{-1}(i)}[1]
&=
\begin{cases}
-r_{k_m} {p}_{k_m}[m] 
& i = k_m
\\
r_{i}{p}_{i}[m]
+d_{k_m}[\varepsilon_m b_{k_m i}[m]]_+ r_{k_m}{p}_{k_m}[m]
& i\neq k_m,
\end{cases}
\end{align}
while
$\tilde{p}_i[t]$ ($t=2,\dots,m$)
and $y_{k_t}[t]$ ($t=1,\dots,m$) are dependent variables 
of the integration variables such that,
for $t=1,\dots,m-1$,
\begin{align}
\label{eq:rp1}
r_i \tilde{p}_i[t+1]
&=
\begin{cases}
-r_{k_t} {p}_{k_t}[t] 
& i = k_t
\\
r_i{p}_i[t]
+d_{k_t}[\varepsilon_t b_{k_t i}[t]]_+ r_{k_t}{p}_{k_t}[t]
& i\neq k_t,
\end{cases}
\end{align}
and, for $t=1,\dots,m$,
\begin{align}
\label{eq:ypw1}
y_{k_t}[t]&:=\exp \left( \lambda( r_{k_t} p_{k_t}[t] + w_{k_t}[t])\right),
\quad
w_{k_t}[t]=
\sum_{i=1}^n b_{jk_t}[t] u_j[t].
\end{align}

\begin{rem}
The symmetry of the skew-symmetrizer \eqref{eq:rsym1} is used to 
obtain the relation \eqref{eq:rp2}.
Without it, the left-hand side of \eqref{eq:rp2}
is $r_i \tilde{p}_{\sigma^{-1}(i)}$, which we do not want.
(Compare with the forthcoming \eqref{eq:se6}.)
\end{rem}

 \subsection{Step 3: Saddle point method}

In our parametrization of $q$ in \eqref{eq:qh1},
the asymptotic behavior of the quantum dilogarithms of higher degree
in \eqref{eq:gasym2}  is expressed as
\begin{align}
\label{eq:gasym3}
\mathbf{\Psi}_{d,z,q_i}(x)
\sim
\exp\left(
\frac{\sqrt{-1}}{\hbar}
\frac{1}{2\lambda^2 r_i}
\mathrm{Li}_{2;d,z}(-x)
\right),
\quad
\hbar\rightarrow 0^+.
\end{align}

We are interested in the asymptotic behavior of \eqref{eq:int1}
in the limit $\hbar \rightarrow 0^+$.
Thus, we may replace the quantum dilogarithms
therein by the right-hand side of \eqref{eq:gasym3},
and we have
\begin{align}
\label{eq:int2}
\begin{split}
F(u[1],\tilde{p}[1])
\sim
&(2\pi \hbar)^{-n(m-1)}
\int dp[1] \cdots dp[m-1] du[2] \dots du[m]\\
 &\times 
 \exp \left( \frac{\sqrt{-1}}{\hbar} 
 \sum_{t=1}^m 
 \left\{
 \frac{{\varepsilon_t}}{2\lambda^2 r_{k_t}}
\mathrm{Li}_{2;d_{k_t},z_{k_t}[t]}(-y_{k_t}[t]^{\varepsilon_t})
+
 u[t] (p[t]-\tilde{p}[t])
 \right\}\right).
\end{split}
\end{align}

\begin{rem}
Due to our assumption of $\lambda \approx 1$ with $\mathrm{Im}\, \lambda>0$,
$y_{k_t}[t]$ defined by \eqref{eq:ypw1} is a complex number close to a positive real number.
Accordingly, the functions $\mathrm{Li}_{2;d_{k_t},z_{k_t}[t]}(x)$ 
in \eqref{eq:int2} are analytically continued
in the vicinity of the negative real line $\mathbb{R}_-$.
\end{rem}

To evaluate the integral \eqref{eq:int2} in the limit $\hbar \rightarrow 0^+$,
we apply the {\em saddle point method}.

To do that, we need the following formulas.
\begin{lem} 
\begin{align}
\label{eq:der2}
r_i \frac{\partial}{\partial u_i[t]}
\left(
 \frac{{\varepsilon_t}}{2\lambda^2 r_{k_t}}
\mathrm{Li}_{2;d_{k_t},z_{k_t}[t]}(-y_{k_t}[t]^{\varepsilon_t})
\right)
&=
-\frac{1}{\lambda}\log
\left( 
P_{d_{k_t},z_{k_t}[t]}(y_{k_t}[t]^{\varepsilon_t})
\right)^{-b_{k_t i}[t]/2} ,
\\
\label{eq:der3}
\frac{\partial}{\partial p_i[t]}
\left(
 \frac{{\varepsilon_t}}{2\lambda^2 r_{k_t}}
\mathrm{Li}_{2;d_{k_t},z_{k_t}[t]}(-y_{k_t}[t]^{\varepsilon_t})
\right)
&=
\delta_{i k_t}
\frac{1}{\lambda}\log
\left( 
P_{d_{k_t},z_{k_t}[t]}(y_{k_t}[t]^{\varepsilon_t})
\right)^{-1/2} .
\end{align}
\end{lem}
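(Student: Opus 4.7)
The plan is to prove both formulas by a direct chain-rule computation, using only the integral definition of $\mathrm{Li}_{2;d,z}$ together with the skew-symmetrizer relation for $B[t]$. First I would differentiate the integral in \eqref{eq:hLi2int1} by the fundamental theorem of calculus to obtain $\frac{d}{dx}\mathrm{Li}_{2;d_{k_t},z_{k_t}[t]}(-x) = -\frac{\log P_{d_{k_t},z_{k_t}[t]}(x)}{x}$, which is valid for $x$ in the vicinity of the positive real axis where, by the remark following \eqref{eq:int2}, the analytic continuation of the integrand is unambiguous.

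Next, from \eqref{eq:ypw1} we have $\log y_{k_t}[t] = \lambda(r_{k_t} p_{k_t}[t] + w_{k_t}[t])$ with $w_{k_t}[t] = \sum_j b_{j k_t}[t] u_j[t]$, so the logarithmic derivatives of $y := y_{k_t}[t]^{\varepsilon_t}$ are $\partial \log y/\partial u_i[t] = \lambda \varepsilon_t b_{i k_t}[t]$ and $\partial \log y/\partial p_i[t] = \lambda \varepsilon_t r_{k_t} \delta_{i k_t}$. Combining these with the derivative of the dilogarithm computed above, the chain rule yields $\partial \mathrm{Li}_{2;d_{k_t},z_{k_t}[t]}(-y)/\partial u_i[t] = -\lambda \varepsilon_t b_{i k_t}[t] \log P_{d_{k_t},z_{k_t}[t]}(y)$ and the analogous identity for the $p$-derivative.

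To finish \eqref{eq:der2} I would multiply through by $r_i \cdot \varepsilon_t/(2\lambda^2 r_{k_t})$ and apply the skew-symmetrizer identity $r_i b_{i k_t}[t] = -r_{k_t} b_{k_t i}[t]$, which holds for all $t$ because $R$ is a common skew-symmetrizer of the $B[t]$'s. After cancelling $r_{k_t}$ and using $\varepsilon_t^2 = 1$, the right-hand side collapses to $\frac{b_{k_t i}[t]}{2\lambda}\log P = -\frac{1}{\lambda}\log P^{-b_{k_t i}[t]/2}$, matching the claim. The derivation of \eqref{eq:der3} is a simpler special case: only the term $i = k_t$ survives thanks to the Kronecker delta, and after the $r_{k_t}$ factors cancel one is left with $-\frac{\delta_{i k_t}}{2\lambda}\log P = \delta_{i k_t}\frac{1}{\lambda}\log P^{-1/2}$.

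The computation is essentially mechanical; the only subtle point is sign and index bookkeeping, in particular applying the skew-symmetrizer identity in the correct order so that the $b_{i k_t}[t]$ produced by the chain rule on the left-hand side becomes the $b_{k_t i}[t]$ appearing in the claimed right-hand side of \eqref{eq:der2}, which is the form that matches the mutation pattern of the $\hat{Y}$-variables in Proposition \ref{prop:Yrep1} used in the saddle-point analysis. I do not anticipate any genuine obstacle beyond this careful bookkeeping.
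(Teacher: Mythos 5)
Your computation is correct and follows exactly the route the paper intends: differentiate the integral definition \eqref{eq:hLi2int1}, apply the chain rule via $\log y_{k_t}[t]=\lambda(r_{k_t}p_{k_t}[t]+\sum_j b_{jk_t}[t]u_j[t])$ from \eqref{eq:ypw1}, and convert $r_i b_{ik_t}[t]$ into $-r_{k_t}b_{k_t i}[t]$ using the skew-symmetry of $RB[t]$. The paper's own proof is precisely this one-line recipe, so there is nothing to add.
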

\begin{proof}
These are obtained by \eqref{eq:hLi2int1}, 
\eqref{eq:ypw1}, and the skew-symmetric property of $RB[t]$.
\end{proof}

The {\em saddle point equation} of the integral \eqref{eq:int2}
is the extremum condition of its integrand with respect to
the integral variables $p[t]$ ($t=1,\dots,m-1$) and 
$u[t]$ ($t=2,\dots,m$).

\par
(a). {\em Extremum condition for $u_i(t)$} ($t=2,\dots,m$).
By differentiating the integrand of \eqref{eq:int2} by
$u_i(t)$ using \eqref{eq:der2}, and multiplying $r_i$, we have
\begin{align}
\label{eq:se1}
-\frac{1}{\lambda}\log
\left( 
P_{d_{k_t},z_{k_t}[t]}(y_{k_t}[t]^{\varepsilon_t})
\right)^{-b_{k_t i}[t]/2}+r_ip_i[t]- r_i \tilde{p}_i[t]=0.
\end{align}
Note that, in particular,
\begin{align}
\label{eq:se11}
p_{k_t}[t]=\tilde{p}_{k_t}[t].
\end{align}
By \eqref{eq:se1} and \eqref{eq:rp1}, we also have, for $t=2,\dots,m-1$,
\begin{align}
\label{eq:se2}
e^{\lambda r_i \tilde{p}_i[t+1]}
=
\begin{cases}
(e^{\lambda r_{k_t} \tilde{p}_{k_t}[t]})^{-1}
& i=k_t
\\
e^{\lambda r_{i} \tilde{p}_i[t]}
\left(
(e^{\lambda r_{k_t} \tilde{p}_{k_t}[t]})^{[\varepsilon_t b_{k_t i}[t]]_+}
\right)^{d_{k_t} }
\\
\quad \times
\left( 
P_{d_{k_t},z_{k_t}[t]}(y_{k_t}[t]^{\varepsilon_t})
\right)^{-b_{k_t i}[t]/2}
& i \neq k_t.
\end{cases}
\end{align}

\par
(b). {\em Extremum condition for $p_i(t)$} ($t=1,\dots,m-1$).
By differentiating the integrand of \eqref{eq:int2} by
$p_i(t)$ using \eqref{eq:der3} and \eqref{eq:rp1},
we have,
for $i=k_t$,
\begin{align}
\label{eq:se3}
\begin{split}
& \frac{1}{\lambda}\log
\left( 
P_{d_{k_t},z_{k_t}[t]}(y_{k_t}[t]^{\varepsilon_t})
\right)^{-1/2}+u_{k_t}[t]\\
& \qquad
-\sum_{j=1}^n
d_{k_t} [\varepsilon_t b_{k_t i}[t]]_+
 u_j[t+1]
+ u_{k_t}[t+1]
=0,
\end{split}
\end{align}
and otherwise,
\begin{align}
\label{eq:se5}
u_i[t]-u_i[t+1]=0,
\quad i\neq t_k.
\end{align}
By \eqref{eq:se3} and \eqref{eq:gBmut1} (
noticing that
$[-b_{ik}]_+b_{kj} + b_{ik}[b_{kj}]_+=
[b_{ik}]_+b_{kj} + b_{ik}[-b_{kj}]_+$
),
we also have the following equations for $w_i[t]=
\sum_{j=1}^n b_{ji}[t]u_j[t]$ for $t=1,\dots,m-1$:
\begin{align}
\label{eq:se4}
e^{\lambda w_i[t+1]}
=
\begin{cases}
(e^{\lambda w_{k_t}[t]})^{-1}
& i=k_t
\\
e^{\lambda w_i[t]}
\left(
(e^{\lambda w_{k_t}[t]})^{[\varepsilon_t b_{k_t i}[t]]_+}
\right)^{d_{k_t} }
\\
\quad \times
\left( 
P_{d_{k_t},z_{k_t}[t]}(y_{k_t}[t]^{\varepsilon_t})
\right)^{-b_{k_t i}[t]/2}
& i \neq k_t.
\end{cases}
\end{align}

A complex (but almost positive real) solution $u_i[t]$ ($t=2,\dots,m$),
$p_i[t]$ ($t=1,\dots,m-1$) of \eqref{eq:se1} and \eqref{eq:se3}
is constructed as follows.
\begin{itemize}
\item[(1).] ($y$-variables) We have $u_i[1]$ as initial data,
from which $w_i[1]$ is uniquely determined.
We set $y_i[1]=\exp(2\lambda w_i[1])$.
Then, $y_i[t]$ ($t=1,\dots,m$) are determined by the mutation sequence
\eqref{eq:gseq1}.
\item[(2).] ($u$-variables) We determine $u_i[t]$ ($t=2,\dots,m$) by
the extremum condition \eqref{eq:se3} and \eqref{eq:se5}.
\item[(3).] ($p$-variables) Set $\tilde{p}_i[t]$ ($t=2,\dots,m$)
by $\exp(\lambda r_i\tilde{p}_i[t])=y_i[t]^{1/2}$.
Then, determine $p_i[t]$ ($t=1,\dots,m-1$) by \eqref{eq:rp1},
and $p_i[m]$ by \eqref{eq:rp2}.
\end{itemize}
It is necessary to check that \eqref{eq:ypw1} and \eqref{eq:se1} are satisfied.
\par
 The condition \eqref{eq:ypw1}: By \eqref{eq:se3} and \eqref{eq:se5},
we have \eqref{eq:se4}.
Thus, we have $\exp(\lambda r_i\tilde{p}_i[t])=\exp(\lambda w_i[t])=y_i[t]^{1/2}$.
Therefore, the condition \eqref{eq:ypw1} is satisfied.
\par
The condition \eqref{eq:se1}:
Since $\exp(\lambda r_i\tilde{p}_i[t])=y_i[t]^{1/2}$,
the condition \eqref{eq:se2} is satisfied.
Then, combining it with \eqref{eq:rp1},
we obtain  \eqref{eq:se1} for $t=1,\dots,m-1$.
To obtain \eqref{eq:se1} for $t=m$ requires a little extra consideration.
Extend the above construction for $y_i[m+1]$ and $\tilde{p}_i[m+1]$.
Then, the condition \eqref{eq:se2} is satisfied for $t=m$.
Moreover, thanks to the $\sigma$-periodicity of $y$-variables,
we have  
\begin{align}
r_{\sigma(i)}\tilde{p}_{\sigma(i)}[m+1]=r_i \tilde{p}_i[1].
\end{align}
Thus, we have
\begin{align}
\label{eq:se6}
e^{\lambda r_{\sigma^{-1}(i)} \tilde{p}_{\sigma^{-1}(i)}[1]}
=
\begin{cases}
(e^{\lambda r_{k_m} \tilde{p}_{k_m}[m]})^{-1}
& i=k_m
\\
e^{\lambda r_{i} \tilde{p}_i[m]}
\left(
(e^{\lambda r_{k_m} \tilde{p}_{k_m}[m]})^{[\varepsilon_t b_{k_m i}[m]]_+}
\right)^{d_{k_m} }
\\
\quad \times
\left( 
P_{d_{k_m},z_{k_m}[m]}(y_{k_m}[m]^{\varepsilon_m})
\right)^{-b_{k_m i}[m]/2}
& i \neq k_m.
\end{cases}
\end{align}
Then, combining it with \eqref{eq:rp2},
we obtain  \eqref{eq:se1} for $t=m$.

\par
Thus, this is indeed a solution of the saddle point equation.

The saddle point method claims that, under ``some" condition
which we do not pursue in this paper,
 the integral \eqref{eq:int2} in the limit $\hbar \rightarrow 0$
is approximated at the value of the integrand
at an extremum point, up to some multiplicative factor which is irrelevant here.
(See \cite[p. 95]{Takhtajan08} for the explicit expression for the one variable case.)
Therefore, taking the above solution,  ignoring the multiplicative factor,
then taking the logarithm and removing the factor $\sqrt{-1}/\hbar$, we have
\begin{align}
\label{eq:int3}
 \sum_{t=1}^m 
 \left\{
 \frac{{\varepsilon_t}}{2\lambda^2 r_{k_t}}
\mathrm{Li}_{2;d_{k_t},z_{k_t}[t]}(-y_{k_t}[t]^{\varepsilon_t})
+
 u[t] (p[t]-\tilde{p}[t])
 \right\}.
\end{align}
Recall that, for our solution,
\begin{align}
p_i[t]-\tilde{p}_i[t]&=\frac{1}{\lambda r_i}
\log 
\left( 
P_{d_{k_t},z_{k_t}[t]}(y_{k_t}[t]^{\varepsilon_t})
\right)^{-b_{k_t i}[t]/2},
\\
w_i[t]&=\frac{1}{2\lambda} \log y_i[t].
\end{align}
Thus, using the skew-symmetric property
$
b_{k_t i}[t] r_i^{-1} = - b_{ik_t}[t]r_{k_t}^{-1}
$,
we have
\begin{align}
\begin{split}
\sum_{i=1}^n 
 u_i[t] (p_i[t]-\tilde{p}_i[t])
 &=
 \frac{1}{\lambda }
 \sum_{i=1}^n 
 \frac{1}{r_i }
 \left(-\frac{b_{k_t i}[t]}{2}
 \right)
  u_i[t] 
  \log 
P_{d_{k_t},z_{k_t}[t]}(y_{k_t}[t]^{\varepsilon_t})
\\
&=
 \frac{1}{2 \lambda r_{k_t}}
 \sum_{i=1}^n 
 b_{i k_t }[t]
  u_i[t] 
  \log 
P_{d_{k_t},z_{k_t}[t]}(y_{k_t}[t]^{\varepsilon_t})
\\
&=
 \frac{1}{2 \lambda r_{k_t}}
  w_{k_t}[t] 
  \log 
P_{d_{k_t},z_{k_t}[t]}(y_{k_t}[t]^{\varepsilon_t})
\\
&=
 \frac{\varepsilon_t}{4 \lambda^2 r_{k_t}}
 \log y_i[t]^{\varepsilon_t}
 \cdot
  \log 
P_{d_{k_t},z_{k_t}[t]}(y_{k_t}[t]^{\varepsilon_t}).
\\
\end{split}
\end{align}
Thus, the expression \eqref{eq:int3} is  equal to
\begin{align}
\label{eq:int4}
 \frac{-1}{2\lambda^2}
 \sum_{t=1}^m 
\frac{\varepsilon_t}{r_{t_k}}
 \left\{
 -
\mathrm{Li}_{2;d_{k_t},z_{k_t}[t]}(-y_{k_t}[t]^{\varepsilon_t})
-
\frac{1}{2}
 \log y_i[t]^{\varepsilon_t}
 \cdot
  \log 
P_{d_{k_t},z_{k_t}[t]}(y_{k_t}[t]^{\varepsilon_t})
 \right\},
\end{align}
which exactly yields  the {\em Rogers dilogarithms of higher degree} \eqref{eq:Rogers7}.
On the other hand, this term is 0 from the beginning.
Therefore, we have the classical dilogarithm identity of higher degree \eqref{eq:gid6}
with complex (almost real) argument.
Taking $\lambda\rightarrow 1$, we recover the identity  \eqref{eq:gid6} with real argument.

\bibliography{../../biblist/biblist.bib}
\end{document}